\title{}
\author{}
\date{}
\renewcommand\baselinestretch{1}
\def\Irr{\hbox{\rm Irr}}
\def\Aut{\hbox{\rm Aut}}
\def\NL{\hbox{\rm NL}}
\def\HT{\heiti\bf\relax}
\def\ST{\songti\rm\relax}
\newcommand{\vf}[1]
{\frac{\partial V}{\partial x}f_#1(x)}
\renewcommand{\theequation}{\thesection.\arabic{equation}}     
\newtheorem*{main theorem}{Main Theorem}
\newtheorem{theorem}{Theorem}[section]
\newtheorem{corollary}[theorem]{Corollary}
\newtheorem{lemma}[theorem]{Lemma}
\newcommand{\cd}[1]{{\rm  cd}(#1)}
\begin{document}
\pagestyle{empty}                                              

\newpage
\pagestyle{plain}
\pagenumbering{arabic}                                         
\renewcommand\baselinestretch{1}
\begin{center}
\huge
Square character degree graphs yield direct products
\end{center}
\vspace{0.3cm}
\begin{center}
{\Large
Mark\  L.\  Lewis} \\
Department of Mathematical Sciences \\
Kent State University \\
Kent, OH 44242 \\
e-mail: lewis@math.kent.edu \\

\medskip

{\Large Qingyun\  Meng }\\
School of Mathematical Sciences \\
Xiamen University \\
Xiamen, China 361005 \\
e-mail: meng19821030@live.cn \\
\end{center}
\vspace{0.7cm}
\noindent{\small Abstract.}
{\small If $G$ is a solvable group, we take $\Delta (G)$ to be the character degree graph for $G$ with primes as vertices. We prove that if $\Delta (G)$ is a square, then $G$ must be a direct product.}

\smallskip
\noindent{\small Key words}\ \ {\small  character degree graphs; character degrees; Fitting height; square  }

\smallskip
\noindent{\small CLC number }\ \  O152.1\ \ \ \ {\small Document Code }\ \ A

\pagestyle{plain}
\pagenumbering{arabic}
\section{Introduction} \label{section 1}


Throughout this paper, all groups are finite.  Let $G$ be a group. The set of irreducible character degrees of $G$ is denoted by $\cd G$.  The character degree graph of $G$, written $\Delta (G)$, has a vertex set, $\rho (G)$, that consists of the primes that divide degrees in $\cd G $.  There is an edge between $p$ and $q$ if $pq$ divides some degree $a \in \cd G$.

Character degree graphs have proven to be a useful tool to study the structure of $G$ when given information regarding $\Irr (G)$.  They have been studied more then twenty years, and people have obtained a number of interesting results.  For example, for a finite solvable group $G$, the graph $\Delta(G)$ has at most two connected components.  In addition, if $\Delta(G)$ is disconnected, then each connected component is a complete graph.  Moreover, P\'{a}lfy has proved that for a finite solvable group $G$, any three primes in $\rho(G)$ must have an edge in $\Delta(G)$ that is incident to two of those primes. We will call this the Three Primes theorem.  We will prove that one consequence of the Three Primes theorem, is that if $\Delta (G)$ has at least four vertices, then either
$\Delta (G)$ contains a triangle (i.e., a complete subgraph of three vertices) or $\Delta (G)$ is a square.

It is not difficult  to find solvable groups where $\Delta (G)$ is a square.  These groups have been studied by the first author in \cite{[1]} where it was proved that they have Fitting height at most $4$.  Let $G$ be a finite solvable group with $\Gamma$ as its character degree graph, where $\Gamma$ is a square with $\rho = \rho (G) = \{p, q, r, s \}$ as its vertex set, and the set $\{pr, ps, qr, qs \}$ as its edge set.  One natural way to construct a group $G$ with this structure is to take $G = A \times B$ where $\Delta (A)$ is disconnected with components $\{ p \}$ and $\{ q \}$ and $\Delta (B)$ is disconnected with components $\{ r \}$ and $\{ s \}$.  The question that arises is whether there are any other ways to obtain solvable groups with this graph.  We prove that in fact there are not.

\begin{main theorem}
Let $G$ be a solvable group where $\Delta (G) = \Gamma$.  Then $G = A \times B$ where $\rho (A) = \{ p , q \}$ and $\rho (B) = \{ r, s \}$.
\end{main theorem}

This theorem is a generalization for solvable groups of Theorem B of \cite{struct}.  That theorem stated that if $p$, $q$, $r$, and $s$ were distinct primes and $G$ is a group with
$\cd G = \{ 1, p, q, r, s, pr, ps, qr, qs \}$, then $G = A \times B$ with $\cd A = \{ 1, p, q \}$ and $\cd B = \{ 1, r, s \}$.  It is easy to see that $\Delta (G)$ is $\Gamma$, so the Main Theorem applies, and it is not difficult to see that the conclusion implies this result.

We strongly believe that the hypothesis that $G$ is solvable can be removed.  However, it is likely that this would require appealing to the classification of simple groups whose orders are divisible by at most $4$ primes found in \cite{HuLe}.  The arguments employed will likely be of a highly different flavor, and we have not pursued this at this time.

The proof of the Main Theorem will be broken into two pieces
depending on whether or not $G$ has a normal nonabelian Sylow
subgroup for some prime.  If there exists a prime $t \in \rho(G)$
such that $G$ has a normal Sylow $t$-subgroup, then we study the
structure of $G$ in Section \ref{section 2}.  In Section
\ref{section 3}, we study the case where $G$ has no nonabelian Sylow
subgroups.  In this case, the degree graph of $G/\Phi(G)$ also is
$\Gamma$, so that we can consider $G/\Phi(G)$.  We then extend the
results for $G/\Phi (G)$ to $G$ in two steps depending whether or
not ${\bf F} (G)$ is abelian.

\section{Character degree graphs}

We first establish some notation which will be used repeatedly.  If $m$ is an integer, then $\pi (m)$ is the set of primes that divide $m$.  If $G$ is a group, then $\pi (G) = \pi (|G|)$ and if $N$ is a normal subgroup of $G$, then $\pi (G:N) = \pi (|G:N|)$.

Let $\NL(G)$ denote the set of nonlinear irreducible characters of
$G$. If $N$ is a normal subgroup of $G$, then $\Irr (G \mid N)$ is
the set of irreducible characters of $G$ whose kernels do not
contain $N$. Therefore, $\Irr (G)$ is a disjoint union of $\Irr (G
\mid N)$ and $\Irr(G/N)$. Define $\cd {G \mid N} = \{\chi(1) \mid
\chi \in \Irr(G \mid N) \}$ and observe that $\cd G = \cd {G/N} \cup
\cd {G \mid N}$.  Also, for a character $\theta \in \Irr (N)$, we
use the usual notation that $\Irr (G \mid \theta)$ is the set of
irreducible constituents of $\theta^{G}$, and we define $\cd {G \mid
\theta} = \{\chi(1) \mid \chi \in \Irr (G \mid \theta) \}$.  Note
that $\cd {G \mid N}$ is the union of the sets $\cd {G \mid \theta}$
as $\theta$ runs through all the nonprincipal characters in $\Irr
(N)$.

We now prove an assertion made in the Introduction.

\begin{lemma} \label{no triangles}
Let $G$ be a solvable group.  If $\Delta (G)$ has at least $4$ vertices, then either $\Delta (G)$ contains a triangle or $\Delta (G)$ is a square.
\end{lemma}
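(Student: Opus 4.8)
The plan is to rephrase the hypothesis in terms of $\Delta (G)$ together with its complement, to bound $|\rho (G)|$ by a Ramsey-type count, to list the handful of graphs that then remain, and to eliminate every one of them except the square.

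Write $n = |\rho (G)|$. By P\'{a}lfy's Three Primes theorem no three vertices of $\Delta (G)$ are pairwise nonadjacent, that is, the complement $\overline{\Delta (G)}$ is triangle-free; by hypothesis $\Delta (G)$ is triangle-free as well. Since every graph on $6$ vertices contains a triangle or an independent set of $3$ vertices (the Ramsey number $R(3,3)=6$), we get $n \le 5$, hence $n \in \{4,5\}$. A short direct check, using only that $\Delta (G)$ and $\overline{\Delta (G)}$ are both triangle-free, shows that for $n = 4$ the graph $\Delta (G)$ is the $4$-cycle (i.e.\ the square), the path $P_4$, or the disjoint union $K_2 \cup K_2$ of two edges, while for $n = 5$ it is the $5$-cycle $C_5$ (the only triangle-free graph on five vertices whose complement is also triangle-free). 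So the problem reduces to showing that $\Delta (G)$ cannot be $P_4$, $K_2 \cup K_2$, or $C_5$.

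The disjoint union of two edges is disconnected, its two connected components being the two edges. I would rule it out using the structure theory of solvable groups with a disconnected degree graph: such a graph cannot have both of its (necessarily complete) components of size $2$. For $P_4$, there is a prime $p$ adjacent to exactly one other prime $q$; I would analyze the normal structure forced on $G$ by such a ``pendant'' prime (or appeal to the known determination of the four-vertex degree graphs of solvable groups) to conclude that $\Delta (G)$ would then have to contain a triangle, which is impossible in this case. For $C_5$, every vertex has degree $2$ and the graph is not bipartite; one argues from the group structure underlying the adjacencies (or from the known results on five-vertex degree graphs of solvable groups) that $C_5$ is not realizable. Once $P_4$, $K_2 \cup K_2$, and $C_5$ are all eliminated, the square is the only remaining possibility when $\Delta (G)$ has no triangle.

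The crux is this last step. The Three Primes theorem together with the fact that $\Delta (G)$ has at most two connected components, each complete when $\Delta (G)$ is disconnected, are not by themselves enough to kill $P_4$, $K_2 \cup K_2$, or $C_5$: each of those graphs is triangle-free, has triangle-free complement, and has at most two complete components. Eliminating them genuinely requires the solvable-group structure behind the graph — equivalently, an appeal to the existing classification of the small degree graphs of solvable groups, including the disconnected ones.
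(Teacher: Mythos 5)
Your proposal is correct and follows essentially the same route as the paper: both reduce, using that $\Delta(G)$ and (by the Three Primes theorem) its complement are triangle-free, to the candidate graphs $C_4$, $P_4$, $K_2 \cup K_2$, and $C_5$, and then eliminate the last three exactly as you indicate --- by P\'alfy's result that a disconnected degree graph cannot have two components of size two, Zhang's elimination of the four-vertex path, and Lewis's elimination of the five-vertex cycle. The only cosmetic difference is that you bound $|\rho(G)| \le 5$ via the Ramsey number $R(3,3)=6$ and then enumerate, whereas the paper argues that in the connected case every vertex has degree at most $2$ (so the graph is a path or a cycle) and lets the Three Primes condition exclude paths on $5$ or more and cycles on $6$ or more vertices.
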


\begin{proof}
Suppose $\Delta (G)$ has no triangles and has at least four vertices.  We prove that $\Delta (G)$ is a square.  We make strong use of the three prime condition.  If $\Delta (G)$ is disconnected, then three prime condition implies that each connected component is a complete graph.  The condition of no triangles implies that each connected component has at most two vertices.  P\'alfy proved the disconnected graph where each connected component has two vertices cannot occur.  (See \cite {[9]}.)  Thus, $\Delta (G)$ must be connected.  If a vertex has $3$ or more neighbors, then the three prime condition implies that there will be edge incident to two of these neighbors, and $\Delta (G)$ will have a triangle.  Thus, the condition of no triangles implies that every vertex of $\Delta (G)$ has degree at most $2$.  I.e., $\Delta (G)$ will be either a path or a cycle.  The three prime condition disallows paths with $5$ or more vertices and cycles with $6$ or more vertices. The path with $4$ vertices was proved to not occur by Zhang in \cite{[2]}, and the cycle with $5$ vertices was proved to
not exist by the first author in \cite{[10]}.  This leaves only the cycle with $4$ vertices, i.e. a square.
\end{proof}

To study groups where $\Delta (G)$ is a square, we will come across many examples of solvable groups whose degree graphs are disconnected.  These groups have been studied extensively.  We will make use of the classification of these groups that can be found in \cite{[5]}.  We will say that $G$ is disconnected if $\Delta (G)$ is disconnected.  We will say that $G$ is of disconnected Type $n$ if $G$ satisfies the hypotheses of Example 2.n in \cite{[5]}.  We give a brief summary of the some of the facts regarding six types of disconnected groups.  We are not giving full descriptions.  For full description one should consult \cite{[5]}.

We say that $G$ is disconnected of Type 1, if $G$ has a normal nonabelian Sylow $p$-subgroup $P$ and an abelian $p$-complement $H$.  Also, $P$ has nilpotence class 2.

For $G$ disconnected of Types 2 and 3, $G$ is a semi-direct product
of a group $H$ acting on a group $P$ where $|P| = 9$.  Let $Z = {\bf
C}_H (P) = {\bf Z} (G)$.  If $G$ is Type 2, then $H/Z \cong  {\rm SL}_2
(3)$ and if $G$ is Type 3, then $H/Z \cong  {\rm GL}_2 (3)$. In both
cases, $\rho (G) = \{ 2, 3 \}$ and ${\bf F}(G) = P \times Z$.  If we
write $F$ and $E/F$ for the Fitting subgroups of $G$ and $G/F$, we
see that $E/F$ is isomorphic to the quaternions, and in particular,
$E/F$ is not abelian.

If $G$ is disconnected of Type 4, 5, and 6, then we take $F$ and
$E/F$ to be the Fitting subgroups of $G$ and $G/F$, and $Z = {\bf Z}
(G)$. It is known that $G/E$ and $E/F$ are cyclic.  When $G$ is of
Type 4, $[E,F]$ is a minimal normal subgroup of $G$ and $F = [E,F]
\times Z$, and the two connected components of $\Delta (G)$ are $\pi
(G:E)$ and $\pi (E:F)$.  Also, there is a prime power $q$ so that
$|[E,F]| = q^m$ where $m = |G:E|$, and $(q^m - 1)/(q - 1)$ divides
$|E:F|$.

When $G$ is disconnected of Type 5, then $F = Q \times Z$ where $Q$ is a nonabelian $2$-group.  Also, $|G:E| = 2$, and the two connected components of $\Delta (G)$ are $\{ 2 \}$ and $\pi (E:F)$.

Finally, if $G$ is disconnected of Type 6, then $G$ has normal nonabelian Sylow $p$-subgroup $P$ and $F = P \times Z$.  The two connected components of $\Delta (G)$ are $\{ p \} \cup \pi (|E:F|)$ and $\pi (|G:E|)$.  In particular, if $G$ is of Type 6, then $\rho (G)$ contains at least three primes.

We summarize some of the facts we need.  In particular, if $G$ is of
Types 2, 3, or 4, then $G$ has an abelian Fitting subgroup, and in
Types 1, 5, and 6, $G$ has a nonabelian Fitting subgroup.  In Types
2 and 3, the Sylow $2$-subgroup of $G$ is nonabelian.  Also, in
Types 1, 4, 5, and 6, ${\bf F}(G/{\bf F}(G))$ is abelian, whereas in
Types 2 and 3, ${\bf F} (G/{\bf F}(G))$ is nonabelian.

We prove the following number theory fact which uses the Zsigmondy
prime theorem.

\begin{lemma} \label{zsig}
Let $p$ be a prime and let $a \ge 1$ and $n > 1$ be integers.  If $(p^{an}-1)/(p^a-1)$ is a power of a prime, then $n$ is a prime and either $a$ is a power of $n$ or $p = 2$, $a = 3$, and $n = 2$.
\end{lemma}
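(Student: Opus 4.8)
The plan is to apply the Zsigmondy prime theorem to the integer $N = (p^{an} - 1)/(p^a - 1)$. First I would set $b = p^a$, so that $N = (b^n - 1)/(b - 1) = 1 + b + \cdots + b^{n-1}$, and observe that $N > 1$ since $n > 1$ and $b \ge 2$. Suppose $N = \ell^k$ for a prime $\ell$. Recall that a Zsigmondy prime for the pair $(p, an)$ is a prime dividing $p^{an} - 1$ but not $p^j - 1$ for any $1 \le j < an$; such a prime divides $N$, and moreover it is congruent to $1$ modulo $an$, so in particular it does not divide $a$ and does not divide $n$. By Zsigmondy's theorem, a Zsigmondy prime for $(p, an)$ exists unless $an \le 2$, or $an = 6$ with $p = 2$, or $p$ is a Mersenne prime and $an = 2$ (this last case is subsumed under $an \le 2$ for our purposes, but I will keep the exceptional triple $(p, an) = (2, 6)$ in mind).

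In the generic case, let $\ell$ be a Zsigmondy prime for $(p, an)$; then $\ell \mid N$, so $N = \ell^k$. Now I would test divisibility by $\ell$ of the smaller quantity $(p^{a} - 1)/(p - 1)$ (when $a > 1$) and of $(p^{ad} - 1)/(p^a - 1)$ for proper divisors $d \mid n$: since $\ell$ is a primitive prime divisor of $p^{an}-1$, it divides none of these proper "partial" quotients. The key algebraic identity is that if $d \mid n$ then $(p^{ad} - 1)/(p^a - 1)$ divides $N = (p^{an}-1)/(p^a-1)$; as $N$ is a prime power and the only prime available is $\ell$, each such proper quotient must equal $1$, forcing $d = 1$ for every proper divisor, i.e. $n$ is prime. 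Similarly, writing $N = \frac{p^{an}-1}{p^{a}-1}$ and comparing with $\frac{p^{an}-1}{p-1} = \frac{p^{a}-1}{p-1}\cdot N$, the factor $(p^a-1)/(p-1)$ must be a power of $\ell$ as well; but $(p^a - 1)/(p-1)$ is coprime to $\ell$ unless it equals $1$, which gives $a = 1$, or unless its relevant prime divisors coincide with $\ell$ in a way that (via a second application of Zsigmondy, now to $(p,a)$) forces $a$ to be a power of $n$. I would organize this as: $\ell \equiv 1 \pmod n$ since $n \mid an$ and $\ell$ is primitive mod $an$; so $\ell \nmid n$; combined with the fact that every prime dividing $p^a-1$ that also divides $N$ must be $\ell$, a short congruence argument yields $a = n^c$ for some $c \ge 0$.

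The main obstacle will be the bookkeeping in the exceptional cases where no Zsigmondy prime exists: $an = 2$ and $an = 6$ with $p = 2$. If $an = 2$ then $(n, a) = (2, 1)$, and $N = p + 1$, which can certainly be a prime power (e.g. $p = 3$, $N = 4$), and here $n = 2$ is prime and $a = 1 = n^0$, consistent with the statement. If $an = 6$ with $p = 2$, the possibilities are $(a, n) \in \{(1, 6), (2, 3), (3, 2), (6, 1)\}$; discarding $n = 1$, I would check each: $(a,n)=(3,2)$ gives $N = 2^3 + 1 = 9 = 3^2$, a prime power — this is exactly the exceptional conclusion $p = 2$, $a = 3$, $n = 2$; $(a,n) = (2,3)$ gives $N = (2^6-1)/(2^2-1) = 63/3 = 21 = 3 \cdot 7$, not a prime power; $(a,n) = (1,6)$ gives $n = 6$ not prime, and indeed $N = 63 = 2^6-1$ is not a prime power. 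So the only surviving exceptional solution is the stated triple. Assembling the generic argument with this finite case check completes the proof; I expect the delicate point to be making the "$a$ is a power of $n$" deduction fully rigorous, which I would handle by applying Zsigmondy a second time to $p^a - 1$ and tracking which primitive prime divisors can appear in $N$.
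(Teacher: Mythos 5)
Your treatment of the first half of the conclusion --- that $n$ must be prime --- is sound, and is arguably cleaner than the paper's: for a divisor $d$ of $n$ with $1<d<n$, the quotient $(p^{ad}-1)/(p^a-1)$ is a divisor of $N$ exceeding $1$ and coprime to a Zsigmondy prime $\ell$ of $p^{an}-1$, which is incompatible with $N$ being a power of $\ell$; and your finite check of the exceptional pairs $an=2$ and $(p,an)=(2,6)$ is complete and recovers exactly the stated exception $p=2$, $a=3$, $n=2$. The gap is in the second half, where you must show $a$ is a power of $n$. The pivotal claim there --- that since $\frac{p^{an}-1}{p-1}=\frac{p^{a}-1}{p-1}\cdot N$, ``the factor $(p^a-1)/(p-1)$ must be a power of $\ell$ as well'' --- is a non sequitur (the hypothesis constrains only $N$, not $(p^{an}-1)/(p-1)$) and is in fact false: for $p=2$, $a=4$, $n=2$ one has $N=(2^8-1)/(2^4-1)=17$, a prime, while $(2^4-1)/(2-1)=15=3\cdot 5$ is not a power of $17$. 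The promised ``short congruence argument'' yielding $a=n^c$ is never supplied, and your stated fallback of applying Zsigmondy to $p^a-1$ does not obviously produce the needed contradiction, so this half of the lemma remains unproved.

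The missing idea, which is what the paper uses, is the cyclotomic factorization $N=\prod\Phi_d(p)$, the product running over the divisors $d$ of $an$ that do not divide $a$. Write $a=bc$ with $b$ the $n$-part of $a$ and $n\nmid c$, and suppose $c>1$. Then $bn$ divides $an$ but not $a$ (its $n$-adic valuation is $v_n(a)+1$), so $\Phi_{bn}(p)=(p^{bn}-1)/(p^{b}-1)>1$ is a factor of $N$ distinct from $\Phi_{an}(p)$; since $bn<an$, it is coprime to the Zsigmondy prime $\ell$ of $p^{an}-1$, so $N$ acquires a prime divisor other than $\ell$ --- a contradiction except when $p=2$ and $an=6$, which your case check already disposes of. Some such divisor of $N$ greater than $1$ and prime to $\ell$ must be exhibited to force $c=1$; your draft does not produce one, and repairing it essentially requires importing this factorization (or the equivalent telescoping quotient $(p^{bn}-1)/(p^{b}-1)\mid N$).
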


\begin{proof}
Let $\Phi_d (x)$ be the cyclotomic polynomial for $d$.  We know that $(p^{an} - 1)/(p^a - 1) = \prod \Phi_d (p)$ where $d$ runs over the divisors of $an$ that do not divide $a$.  The Zsigmondy prime theorem says that $\Phi_d (p)$ is divisible by a prime that does not divide $\Phi_e (p)$ for all $e < d$ except if $p$ is a Mersenne prime and $d = 2$ or if $p = 2$ and $d = 6$.  In particular, if $n$ is not prime, then $\Phi_{an} (p)$ will properly divide $(p^{an}-1)/(p^a - 1)$ and unless $a = 1$, $n = 6$, and $p = 2$, $\Phi_{an} (p)$ will have a prime divisor that does not divide the other factors contradicting the fact that $(p^{an}-1)/(p-1)$ is a prime power.  Note that if $(2^6 - 1)/(2^1 - 1) = 63$ is not a prime power.  Thus, $n$ must be a prime.

Now, write $a = bc$ where $b$ is a power of $n$ and $c$ is not divisible by $n$.  If $c > 1$, then $\Phi_{an} (p)$ and $\Phi_{bn} (p)$ both divide $(p^{an}-1)/(p^a-1)$ and are not equal.  If we do not have $p = 2$ and $an = 6$, then we see that $\Phi_{an} (p)$ has a prime divisor that does not divide $\Phi_{bn} (p)$ and $(p^{an}-1)/(p^a-1)$ will not be a prime power.  Suppose now that $p = 2$ and $an = 6$.  If $a = 2$ and $n = 3$, then $(2^6-1)/(2^2-1) = 63/3 = 21$ is not a prime power.  Hence, we must have $a = 3$ and $n = 2$.  We note that this is a real exception since $(2^6-1)/(2^3-1) = 63/7 = 9$ is a prime power.
\end{proof}

We now apply this to disconnected groups of Type 4.  This lemma is related to Lemma 4.2 of \cite{struct} and Lemma 4.3 of \cite{[5]}.

\begin{lemma} \label{not square}
Let $G$ be a disconnected group of Type 4.  Let $F$ and $E/F$ be the Fitting subgroups of $G$ and $G/F$.  Suppose that $\pi (E:F) = \{ r \}$.  Then $|G:E| = n$ is a prime. If the prime $p$ dividing $|[E,F]|$ is not $n$, then $n$ is odd and $|[E,F]| = p^m$ where $m$ is a power of $n$.  In particular, $|[E,F]|$ is not a square.
\end{lemma}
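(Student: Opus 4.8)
The plan is to combine the structural facts about disconnected groups of Type 4 recorded just before the statement with the number-theoretic Lemma \ref{zsig}. From the summary of Type 4 groups, we know that the two connected components of $\Delta(G)$ are $\pi(G:E)$ and $\pi(E:F)$, that $G/E$ and $E/F$ are cyclic, that $[E,F]$ is a minimal normal subgroup of $G$ with $F = [E,F] \times Z$, and crucially that there is a prime power $q$ with $|[E,F]| = q^{m}$ where $m = |G:E|$ and $(q^{m}-1)/(q-1)$ divides $|E:F|$. First I would unwind what the hypotheses force: since $\pi(E:F) = \{r\}$ is a single prime and $E/F$ is cyclic, $|E:F|$ is a power of $r$; consequently $(q^{m}-1)/(q-1)$, being a divisor of $|E:F|$, is a power of the single prime $r$ (or is $1$).

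Next I would argue that $|G:E| = m = n$ is prime. Write $q = p^{a}$ for the prime $p$ dividing $|[E,F]|$ and some $a \ge 1$. If $m = 1$ then $[E,F]$ is a $p$-group of prime order, but then $p$ would be the sole prime in the component $\pi(G:E)$ only if $G > E$; I would need to handle $m=1$ by noting that the square graph $\Gamma$ has two vertices in each component, or more simply that $[E,F]$ nontrivial and $m=1$ makes $(q^m-1)/(q-1) = 1$, which is consistent, so instead I rely on the fact that $G$ is disconnected of Type 4 with $\Delta(G) = \Gamma$ a square, hence $|\pi(G:E)| \ge 1$ forces $G > E$ and $m > 1$. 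Then $(q^m - 1)/(q-1) = (p^{am}-1)/(p^a - 1)$ is a prime power, so Lemma \ref{zsig} applies with $n := m > 1$: it gives that $m$ is prime and either $a$ is a power of $m$, or $p = 2$, $a = 3$, $m = 2$.

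Now I would split into the two alternatives from Lemma \ref{zsig} and bring in the hypothesis $p \ne n$. In the exceptional case $p = 2$, $a = 3$, $m = 2$: here $|[E,F]| = (2^3)^2 = 2^6$, so $p = 2 = m = n$, contradicting $p \ne n$; this case is therefore vacuous. In the main case, $n = m$ is prime and $a$ is a power of $n$, so $|[E,F]| = p^{am}$ with $am$ a power of $n$ (since $a$ is a power of $n$ and $m = n$). It remains to see $n$ is odd and $|[E,F]|$ is not a square. If $n = 2$, then $(q^2-1)/(q-1) = q + 1 = p^a + 1$ must be a power of the prime $r$; since $p$ is odd (as $p \ne n = 2$), $p^a + 1$ is even, so $r = 2$. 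But then $2 = r \in \pi(E:F)$ while $p^a + 1 \equiv 2 \pmod 4$ when... here I would check the $2$-adic valuation: $p^a \equiv 1 \pmod 2$ gives $p^a + 1 \equiv 0 \pmod 2$, and I would derive the contradiction from the fact that in a square graph the two primes in the component $\pi(G:E)$ cannot already equal $2$ together with $r$, or more directly from $r \in \pi(E:F)$ and $2 \mid |G:E| = m = 2$ forcing $2$ into the other component $\pi(G:E)$, violating disconnectedness unless $r \ne 2$ — a contradiction. Hence $n$ is odd. Finally, $|[E,F]| = p^{am}$ where $m = n$ is an odd prime, so the exponent $am$ is $a \cdot n$ with $a$ a power of the odd prime $n$; thus $am$ is an odd power of... more carefully, $am = a n$ where $a = n^{k}$, so $am = n^{k+1}$, an odd number times... no: $n^{k+1}$ with $n$ odd is odd, hence $am$ is odd, so $p^{am}$ is not a square.

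The main obstacle I anticipate is the careful bookkeeping in the $n = 2$ elimination: I must be sure that having $2 \mid |G:E|$ really does put $2$ into the component $\pi(G:E)$ of $\Delta(G)$ and that this genuinely contradicts $2 = r \in \pi(E:F)$ lying in the other component (the two components of a disconnected graph are disjoint). This uses that for Type 4, the components are exactly $\pi(G:E)$ and $\pi(E:F)$, which is quoted above, so the argument should go through; but I would want to phrase it so as not to accidentally assume $\Delta(G)$ is the square $\Gamma$ in a place where only "disconnected of Type 4" is hypothesized. A secondary care-point is the base case $m > 1$: I should justify it from the structure of Type 4 groups (e.g. $|G:E| = m$ and if $m = 1$ then $G = E$ is nilpotent, contradicting that a disconnected group has Fitting height at least $3$), rather than leaning on the square hypothesis.
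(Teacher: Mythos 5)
Your proposal is correct and follows essentially the same route as the paper: identify $|[E,F]| = (p^a)^n$ with $n = |G:E| > 1$, feed the prime-power condition on $(p^{an}-1)/(p^a-1)$ into Lemma \ref{zsig}, rule out the exceptional case via $p \ne n$, eliminate $n = 2$ because $p^a+1$ even would force $r = 2$ into both components of the disconnected graph, and conclude $m = an$ is an odd prime power. Your extra care about justifying $n > 1$ from the Type 4 structure alone (rather than from the square hypothesis) is well placed, since the lemma is stated for arbitrary Type 4 groups; the paper simply quotes $n>1$ as part of that structure.
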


\begin{proof}
We know that $|[E,F]| = (p^a)^n$ where $p$ is a prime and $a$ is a positive integer.  We know that $n > 1$.  We also know that $(p^{an}-1)/(p^a-1)$ divides $|E:F|$.  Since $\pi (E:F) = \{ r \}$, it follows that $|E:F|$ and hence $(p^{an}-1)/(p^a-1)$ are powers of $r$.  By Lemma \ref{zsig}, we know that $n$ is a prime, and either $a$ is a power of $n$ or $p = 2$, $a = 3$, and $n = 2$.  We now suppose that $n \ne p$.  Thus, we have that $a$ is a power of $n$, so $m = an$ is a power of $n$.  If $n = 2$, then $(p^{a2} - 1)/(p^a-1) = p^a + 1$ is even.  This implies that $r = 2$ which contradicts the fact that $\Delta (G)$ is disconnected.  Thus, $n$ is odd.
\end{proof}

\section{Normal nonabelian Sylow subgroups} \label{section 2}

We start by stating the hypothesis that we study throughout the rest of this paper.

\medskip
{\bf Hypothesis 1.}  Let $G$ be a solvable group with $\Gamma$ as
its character degree graph, where $\Gamma$ is a square with $ \rho
(G)= \rho =\{p, q, r, s\}$ as its vertex set, and the set $\{pr, ps,
qr, qs \}$ as its edge set.
We also set some notation that we use for $G$.  By It\^o's theorem, $G$ has an abelian normal Hall $\rho^{'}$-subgroup, say $A$.  Write $F$ and $E/F$ for the Fitting subgroups of $G$ and $G/F$, respectively.

\medskip
This next lemma shows that if a subgraph of $\Gamma$ has the same vertex set as $\Gamma$ and is $\Delta (H)$ for some solvable group $H$, then $\Delta (H)$ must be $\Gamma$.

\begin{lemma}\label{Lemma 1}
Assume hypothesis 1.  No proper subgraph of $\Gamma$ with four vertices can be the degree graph of a finite solvable group.
\end{lemma}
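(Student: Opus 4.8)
The plan is to classify, up to graph isomorphism, the proper subgraphs of $\Gamma$ on the full vertex set $\{p,q,r,s\}$ and to rule out each one by citing the known restrictions on degree graphs of solvable groups that are collected or proved in the excerpt. A spanning subgraph of a $4$-cycle is obtained by deleting a nonempty set of edges from $\{pr,ps,qr,qs\}$, so up to the symmetry of $\Gamma$ there are only a handful of cases to inspect: (i) delete one edge, leaving a path on $4$ vertices; (ii) delete two opposite edges, leaving two disjoint edges (i.e.\ a disconnected graph whose two components each have two vertices); (iii) delete two adjacent edges, leaving a path on $3$ vertices plus an isolated vertex; (iv) delete three edges, leaving a single edge plus two isolated vertices; (v) delete all four edges, leaving four isolated vertices (the empty graph on four vertices).

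Next I would dispatch the cases. For (v) the empty graph cannot be $\Delta(H)$ for any group with four primes in $\rho(H)$, since every prime in $\rho(H)$ lies on some degree and hence is a genuine vertex but, more to the point, P\'alfy's Three Primes theorem forces any three of the four vertices to support an edge, which rules out (iv) and (v) immediately (with three or four isolated vertices one can pick three pairwise nonadjacent primes). Case (iii) is also killed by the Three Primes theorem: the isolated vertex together with the two endpoints of the length-two path form a triple with only one edge among them. Case (ii) is precisely the disconnected graph with two components of size two, which P\'alfy proved cannot occur (this is the reference \cite{[9]} already invoked in the proof of Lemma~\ref{no triangles}). Case (i), the path on four vertices, is exactly the configuration ruled out by Zhang in \cite{[2]}, again already cited in Lemma~\ref{no triangles}. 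Since every proper spanning subgraph of $\Gamma$ falls into one of these classes, none of them can be the degree graph of a solvable group, which is the assertion.

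The only real bookkeeping is making sure the case list is exhaustive and that each listed graph really is forbidden by a result we are entitled to quote; there is essentially no computation. I expect the main (minor) obstacle to be phrasing case (ii) correctly: one must note that deleting two \emph{opposite} edges of the $4$-cycle yields a graph on four vertices with two connected components each of which is a complete graph on two vertices, so that it is covered by the P\'alfy result on disconnected degree graphs rather than by the Three Primes theorem; the other two-edge deletion (adjacent edges) gives case (iii) and is handled differently. With that distinction in place, the proof is a short finite check.

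\begin{proof}
Let $\Gamma'$ be a proper subgraph of $\Gamma$ with the same vertex set $\rho = \{p,q,r,s\}$; we must show $\Gamma'$ is not the degree graph of any solvable group.  Since $\Gamma'$ is obtained from the $4$-cycle $\Gamma$ by deleting a nonempty subset of its four edges, up to the symmetries of $\Gamma$ there are exactly five possibilities for $\Gamma'$: the empty graph on four vertices; one edge together with two isolated vertices; a path on three vertices together with an isolated vertex; two disjoint edges; and a path on four vertices.

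In the first three cases, $\Gamma'$ contains three vertices no two of which are joined by an edge.  By P\'alfy's Three Primes theorem, the degree graph of a solvable group cannot contain three such vertices, so $\Gamma'$ is not a solvable degree graph.  In the fourth case, $\Gamma'$ is disconnected and each of its two connected components has exactly two vertices; P\'alfy proved that no solvable group has a degree graph of this form (see \cite{[9]}).  In the last case, $\Gamma'$ is a path on four vertices, and Zhang proved in \cite{[2]} that this graph is not the degree graph of any solvable group.  In every case $\Gamma'$ fails to be the degree graph of a finite solvable group, as claimed.
\end{proof}
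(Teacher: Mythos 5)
Your proof is correct and follows essentially the same route as the paper: the paper simply uses the Three Primes theorem up front to observe that only the $4$-path and the two-disjoint-edges graph survive, then cites Zhang \cite{[2]} and Theorem 14(c) of \cite{[3]} (the same fact you attribute to P\'alfy \cite{[9]}, which the paper itself cites for this purpose elsewhere). Your more explicit five-case enumeration is exhaustive and each case is dispatched by a legitimate reference, so the argument stands.
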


\begin{proof}
Notice that the only two proper subgraphs of $\Gamma$ that have four vertices that satisfy the three prime condition are the path with four vertices and the disconnected graph that consists of two paths each with two vertices.  By \cite{[2]} and Theorem 14 (c) of \cite{[3]}, these two graphs cannot occur, and so, this result follows.
\end{proof}

Assume Hypothesis 1, and let $N \lhd G$ with $\rho(N) = \rho(G)$ (or $\rho(G/N) = \rho(G)$), then $\Delta(N) = \Delta(G)$ (or $\Delta(G/N)=\Delta(G)$, respectively).  We will make strong use of this fact in the following proofs without explanation.

Throughout this section, we assume that $G$ has a normal nonabelian
Sylow $p$-subgroup $P$ for some prime $p$.  Notice that the
Schur-Zassenhaus theorem will imply that $G$ has a Hall
$p$-complement $H$.  In other words, $G$ can be viewed as a
semi-direct product $PH$.  This first easy lemma gives a condition
that relates ${\bf C}_{P^{'}} (H)$ with degrees in $\cd G$ that are
nontrivial $p$-powers.

\begin{lemma}\label{Lemma 2.1} Let $G = PH$ be the semi-direct product of $H$ acting on $P$, where $P$ is the normal Sylow $p$-subgroup of $G$ and $H$ is a $p$-complement of $P$ in $G$. Then the following are equivalent:

(1) ${\bf C}_{P^{'}} (H) \neq 1$.

(2) There exists some character $\chi \in \Irr (G)$ such that $\chi
(1)$ is a nontrivial $p$-power.
\end{lemma}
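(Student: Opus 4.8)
The plan is to prove the two directions separately, working with the normal Sylow $p$-subgroup $P$ and its derived subgroup $P'$, which is normal in $G = PH$ since $P \lhd G$. First I would handle the direction $(1) \Rightarrow (2)$: assume $\mathbf{C}_{P'}(H) \neq 1$. Since $\mathbf{C}_{P'}(H)$ is an $H$-invariant normal subgroup of $P$, pick $1 \neq z \in \mathbf{C}_{P'}(H) \cap \mathbf{Z}(P)$ (note $P' \cap \mathbf{Z}(P) \neq 1$ because $P$ is nilpotent and nonabelian, and one should check the centralized part meets the center — if necessary pass to $\mathbf{C}_{P'}(H)$ directly and use that $H$ acts trivially on it). Then there is a nonprincipal linear character $\lambda \in \Irr(\mathbf{C}_{P'}(H))$ (or of a suitable central $H$-invariant subgroup) that is $H$-invariant; since $z \in P'$, any $\chi \in \Irr(P \mid \lambda)$ is nonlinear, so $\chi(1)$ is a nontrivial $p$-power. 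The key point is that $\lambda$ being $H$-invariant lets one find an $H$-invariant (or at least an extendible-after-a-count) $\chi \in \Irr(P)$ lying over it; then one extends $\chi$ to its inertia group in $G$ and induces, and because $\gcd(|P|, |H|) = 1$ the degree of the resulting irreducible character of $G$ is $\chi(1)$ times a $p'$-number — but here one wants a $p$-power degree, so the cleaner route is: take $\chi \in \Irr(P)$ nonlinear and $H$-invariant over $\lambda$, extend it to $G = PH$ by Gallagher/coprime extension, obtaining $\hat\chi \in \Irr(G)$ with $\hat\chi(1) = \chi(1)$, a nontrivial $p$-power.

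For the direction $(2) \Rightarrow (1)$, argue by contraposition: assume $\mathbf{C}_{P'}(H) = 1$ and show no $\chi \in \Irr(G)$ has degree a nontrivial $p$-power. Suppose such a $\chi$ exists. Since $\chi(1)$ is a $p$-power and $P \lhd G$, Clifford theory over $P$ together with the fact that $|G:P| = |H|$ is a $p'$-number forces $\chi$ to restrict irreducibly (or at least with $p'$-ramification that must then be trivial) to $P$: more precisely, let $\vartheta \in \Irr(P)$ lie under $\chi$; then $\chi(1)/\vartheta(1)$ divides $|G:P|$, a $p'$-number, while $\chi(1)$ and $\vartheta(1)$ are both $p$-powers, so $\chi(1) = \vartheta(1)$ and $\vartheta$ is $G$-invariant. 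Thus $\vartheta \in \Irr(P)$ is a nonlinear $H$-invariant character. Now I would invoke Glauberman's lemma / coprime action: since $H$ acts coprimely on $P$ and fixes $\vartheta$, there is an $H$-invariant character-theoretic structure one can exploit — concretely, the stabilizer in $P$ of a suitable linear constituent, or better, apply the coprime action to the pair $(P, H)$ to deduce $\mathbf{C}_{P'}(H) \neq 1$ from the existence of an $H$-fixed nonlinear $\vartheta \in \Irr(P)$. The cleanest tool is: an $H$-invariant $\vartheta \in \Irr(P)$ with $\vartheta$ nonlinear has $Z := \mathbf{Z}(\vartheta) \ge P'$ false in general, but $\vartheta$ does not contain $P'$ in its kernel, so $\ker\vartheta \not\supseteq P'$; then $P'/(P' \cap \ker\vartheta)$ is a nontrivial $H$-invariant section, and pushing down to a chief factor of $G$ inside $P'$ on which $H$ must act trivially (because any $p$-power-degree irreducible forces the relevant commutator action to be trivial) yields $\mathbf{C}_{P'}(H) \neq 1$.

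The main obstacle I expect is the direction $(2) \Rightarrow (1)$, specifically extracting from "$G$-invariant nonlinear $\vartheta \in \Irr(P)$" the conclusion "$H$ centralizes a nontrivial piece of $P'$." The subtlety is that $H$-invariance of $\vartheta$ does not immediately give an $H$-fixed point in $P'$; one must argue via the action of $H$ on $\Irr(\mathbf{Z}(P))$ or on $P'/[P', P]$ and use the coprimeness to get $\mathbf{C}_{P'/[P',P,\dots]}(H) \neq 1$, then lift. I would handle this by reducing modulo $[P, P']$ (or a term of the lower central series) so that $P'$ becomes central and the linear character $\lambda$ under $\vartheta$ restricted to the relevant central subgroup is $H$-invariant and nonprincipal on the image of $P'$; an $H$-invariant nonprincipal linear character of an $H$-invariant elementary abelian section forces, by coprime action on the dual module, a nontrivial fixed subgroup, giving $\mathbf{C}_{P'}(H) \neq 1$ after pulling back. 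The rest — the extension and Clifford-theory bookkeeping — is routine given the coprimeness $\gcd(|P|, |H|) = 1$ supplied by Schur–Zassenhaus.
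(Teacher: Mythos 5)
Your overall architecture is the same as the paper's: for $(2)\Rightarrow(1)$ use Clifford theory and the coprimality of $|G:P|$ to see that $\chi_P$ is irreducible, so you get an $H$-invariant nonlinear $\vartheta\in\Irr(P)$; for $(1)\Rightarrow(2)$ produce an $H$-invariant nonlinear character of $P$ and extend it to $G$ using coprimality (Isaacs, Theorem 8.15 / Corollary 8.16). Those parts are fine. The gap is in the two bridges between fixed \emph{points} of $H$ on $P'$ and fixed \emph{characters} of $H$ on $\Irr(P')$, which is exactly where the content of the lemma lives and where the paper invokes the Glauberman--Isaacs correspondence together with Isaacs's Theorems 13.27 and 13.28 (coprime going-down and going-up for invariant characters of normal subgroups). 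Concretely: for $(1)\Rightarrow(2)$ you start from $1\neq z\in{\bf C}_{P'}(H)$ and hope to find $z$ in ${\bf Z}(P)$; but ${\bf C}_{P'}(H)$ is not normal in $P$ in general (it is only $H$-invariant), and there is no reason it should meet ${\bf Z}(P)$, so the linear character $\lambda$ you want may not exist where you want it. Even granting a fixed element $z$, you still need an $H$-invariant $\chi\in\Irr(P)$ with $z\notin\ker\chi$, and producing an $H$-invariant character over a given one is precisely a coprime fixed-point theorem (Isaacs 13.28 applied to the normal subgroup $P'$), not a routine step. The clean route is: ${\bf C}_{P'}(H)\neq 1$ gives, via the Glauberman correspondence, a nonprincipal $H$-invariant $\xi\in\Irr(P')$; then 13.28 gives an $H$-invariant $\theta\in\Irr(P\mid\xi)$, necessarily nonlinear; then extend.

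For $(2)\Rightarrow(1)$ your proposed reduction modulo $[P,P']$ does not work as stated: a nonlinear $\vartheta\in\Irr(P)$ need not contain $[P,P']$ in its kernel, so you cannot pass to $P/[P,P']$ and keep $\vartheta$. The paper instead applies Theorem 13.27 to get an $H$-invariant irreducible constituent $\xi$ of $\vartheta_{P'}$ (nonprincipal because $P'\not\le\ker\vartheta$ and the constituents form a single $P$-orbit), and then the Glauberman correspondence sends $\xi$ to a nonprincipal character of ${\bf C}_{P'}(H)$, forcing that group to be nontrivial. If you want to avoid Glauberman here, a correct elementary substitute for your idea is to work with $Z={\bf Z}(\vartheta)$: since $P/\ker\vartheta$ is a nonabelian $p$-group, $(P'\cap Z)\not\le\ker\vartheta$, and the $H$-invariant linear character $\mu$ with $\vartheta_Z=\vartheta(1)\mu$ restricts to a nonprincipal $H$-invariant linear character of the $H$-invariant section $(P'\cap Z)/\ker(\mu|_{P'\cap Z})$, a nontrivial abelian $p$-group; coprime duality on that abelian section then yields ${\bf C}_{P'}(H)\neq 1$. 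As written, though, both bridging steps in your proposal are unjustified, so the argument is incomplete.
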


\begin{proof}  Suppose ${\bf C}_{P^{'}} (H) \neq 1$.  By the
Glauberman-Isaacs correspondence, $H$ fixes some nontrivial
irreducible character of $P^{'}$, say $\xi$.  Using Theorem 13.28 of
\cite{[4]}, $\xi^{P}$ has an $H$-invariant irreducible constituent,
say $\theta$.  By Theorem 8.15 of \cite{[4]}, $\theta$ extends to
$G$, which implies the existence of a character $\chi \in \Irr (G)$
where $\chi (1) = \theta (1)$ is a nontrivial $p$-power.

To prove the converse, suppose we have a character $\chi \in \Irr
(G)$ where $\chi (1)$ a nontrivial $p$-power.  Let $\theta =
\chi_{P}$, then $\theta \in \NL(P)$ and is $H$-invariant.  By
Theorem 13.27 of [4],\ $\theta_{P^{'}}$ has a nonprincipal
$H$-invariant irreducible constituent, say $\xi$.  By the
Glauberman-Isaacs correspondence, $\Irr ({\bf C}_{P'} (H))$ contains
a nonprincipal character, which implies ${\bf C}_{P^{'}}(H) \neq 1$.
\end{proof}

The remainder of this section will be devoted to groups that satisfy the following hypothesis.

\medskip
{\bf Hypothesis 2.} Assume Hypothesis 1, and suppose $G = PH$ is the semi-direct product where the $p'$-group $H$ acts by automorphisms on the nonabelian $p$-subgroup $P$.

\medskip
Assume Hypothesis 2.  Based on Lemma \ref{Lemma 2.1}, there are two
cases to consider, namely: (1) ${\bf C}_{P^{'}} (H) \neq 1$ and (2)
${\bf C}_{P^{'}} (H) = 1$.  The goal of the next two lemmas is to
prove that (2) cannot occur.

\begin{lemma} \label{Lemma 2.2} Assume Hypothesis 2, and suppose ${\bf C}_{P^{'}} (H) = 1$, then $A \leq {\bf Z}(G)$.  In addition, all Sylow subgroups of $H$ are abelian and the Sylow $q$-subgroup of $H$ is central in $H$.
\end{lemma}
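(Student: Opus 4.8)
I would prove the three conclusions in turn: first working inside the normal subgroup $PA$, then transferring to $H\cong G/P$, and finally exploiting the hypothesis ${\bf C}_{P'}(H)=1$ through Lemma~\ref{Lemma 2.1}.

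\emph{Step 1: $[A,P]=1$.} Since $P$ and $A$ are both normal in $G$, so is $PA$; hence $\Delta(PA)$ is a subgraph of $\Gamma$ and $\rho(PA)\subseteq\rho$. As $\pi(A)$ is disjoint from $\rho$ and $\pi(PA)=\{p\}\cup\pi(A)$, the only prime that can lie in $\rho(PA)$ is $p$, and $p\in\rho(PA)$ because $P$ is nonabelian, so every degree in $\cd{PA}$ is a power of $p$. If some $\theta\in\Irr(P)$ were not $A$-invariant, then $|A:I_A(\theta)|$ would be divisible by a prime of $\pi(A)$, and by Clifford's theorem some character in $\Irr(PA\mid\theta)$ would have degree divisible by that prime -- impossible. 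So $A$ fixes every irreducible character of $P$; by Brauer's permutation lemma $A$ then fixes every conjugacy class of $P$. Each such class is an $A$-invariant set of $p$-power size, so by coprimality it meets ${\bf C}_P(A)$; since a proper subgroup cannot meet every conjugacy class of $P$, we get ${\bf C}_P(A)=P$, i.e.\ $[A,P]=1$.

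\emph{Step 2: passage to $H$.} As $G=PH$ with $P\lhd G$ and $P\cap H=1$, we have $H\cong G/P$, so $\Delta(H)$ is a subgraph of $\Gamma$ not meeting the vertex $p$; thus $\Delta(H)$ is a subgraph of the path $r-q-s$ (the edge set of $\Gamma$ restricted to $\{q,r,s\}$), and in particular $r$ and $s$ are nonadjacent in $\Delta(H)$. Also $A$ is the normal abelian Hall $\rho'$-subgroup of $H$. By Step 1 it now suffices to show $A\le{\bf Z}(H)$ and that every Sylow subgroup of $H$ is abelian with $Q\le{\bf Z}(H)$.

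\emph{Step 3: using ${\bf C}_{P'}(H)=1$.} By Lemma~\ref{Lemma 2.1}, no irreducible character of $G$ has degree a nontrivial power of $p$. Fix a nonlinear $\theta\in\Irr(P)$. Any constituent of $\theta^{G}$ has degree divisible by $p$, hence not by $q$, and also divisible by $|H:I_H(\theta)|$; since $|H|$ is a $p'$-number, $|H:I_H(\theta)|$ must be an $\{r,s\}$-number. Consequently, if $K$ is a Hall $\{q\}\cup\rho'$-subgroup of $H$ (so $A\le K$ and $K$ contains a Sylow $q$-subgroup of $H$), then $|K:I_K(\theta)|$ divides both $|H:I_H(\theta)|$ and $|K|$, and therefore equals $1$: $K$ fixes $\theta$. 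Thus $K$ fixes every nonlinear irreducible character of $P$. Running the analogous argument on the irreducible constituents of $\theta_{P'}$, I would deduce that $K$ centralizes $P'$; since $P'\neq1={\bf C}_{P'}(H)$, this makes $K$ a proper subgroup of $H$ and exhibits $H/{\bf C}_H(P')$ as an $\{r,s\}$-group acting faithfully and fixed-point-freely on the nontrivial $p$-group $P'$.

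\emph{Step 4: completing the proof, and the main obstacle.} The remaining task is to read off the structure of $H$ from (i) $\Delta(H)$ being a subgraph of the path $r-q-s$ in which no prime of $\pi(A)$ occurs, and (ii) the above control of the action of $H$ on $P$, namely that the Hall $\{q\}\cup\rho'$-subgroup $K$ (which contains $A$ and a Sylow $q$-subgroup of $H$) fixes every nonlinear character of $P$ and centralizes $P'$. I would use It\^o--Michler (a prime not in $\rho(H)$ forces a normal abelian Sylow subgroup of $H$ for that prime), the classification of disconnected solvable groups recalled above when $\Delta(H)$ happens to be disconnected, and a Clifford-theoretic analysis of $\Irr(H\mid\lambda)$ for $\lambda\in\Irr(A)$ -- partly carried out in $G$ since $[A,P]=1$ -- to force every $\lambda\in\Irr(A)$ to be $H$-invariant, whence $A\le{\bf Z}(H)$ and, with Step 1, $A\le{\bf Z}(G)$; a parallel argument would then bound the Sylow $r$- and $s$-subgroups of $H$ and show $Q\le{\bf Z}(H)$. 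I expect the genuinely hard part to be the last assertion: promoting ``$Q$ normalizes (indeed centralizes) $P'$ and $q$ sits in a subgraph of the path $r-q-s$'' to ``$Q$ is \emph{central} in $H$'' rather than merely normal; here I anticipate needing Lemma~\ref{Lemma 1}, so that any normal section of $G$ whose prime set is all of $\rho$ has degree graph exactly $\Gamma$, in order to eliminate the intermediate configurations.
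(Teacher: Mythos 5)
Your plan correctly identifies the starting point---by Lemma~\ref{Lemma 2.1} no character in $\NL(P)$ is $H$-invariant, and Clifford's theorem together with the shape of $\Gamma$ forces $|H:I_H(\theta)|$ to be a nontrivial power of $r$ or of $s$ for every $\theta\in\NL(P)$---but it stops exactly where the real work begins, and the finishing route you sketch is not the one that works. The decisive step in the paper is this: take $\chi_r\in\Irr(G)$ with $\chi_r(1)=p^ar^b$ (it exists because $pr$ is an edge of $\Gamma$) and let $\theta$ be a constituent of $(\chi_r)_P$; then $|H:I_H(\theta)|$ is a power of $r$, so $I_H(\theta)$ contains a Hall $\{q,s\}\cup\rho'$-subgroup $B$; by coprimality (Isaacs, Corollary~8.16) $\theta$ extends to $I_G(\theta)$, Gallagher's theorem then shows that $q$ and $s$ divide no degree of $I_H(\theta)$, and It\^o's theorem makes $B$ abelian \emph{and normal} in $I_H(\theta)$, yielding $[A,Q]=[A,S]=[Q,S]=1$. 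Repeating with $\chi_s$ of degree $p^cs^d$ gives $[A,R]=[Q,R]=1$ and $R$ abelian. Since $H=AQRS$, all three conclusions drop out at once. Your Step~4 replaces this with declarations of intent (``I would use It\^o--Michler\ldots'', ``I expect the genuinely hard part to be\ldots''), so the actual assertions of the lemma are never derived; and you misplace the difficulty: once the two commutator computations are in hand, the centrality of $Q$ is immediate and Lemma~\ref{Lemma 1} is not needed.

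Two further concrete problems. First, in Step~3 you claim a fixed Hall $\{q\}\cup\rho'$-subgroup $K$ stabilizes every $\theta\in\NL(P)$; this does not follow. From $|H:I_H(\theta)|$ being an $r$- or $s$-power you only get that $I_H(\theta)$ contains \emph{some conjugate} of $K$, and the divisibility you invoke ($|K:I_K(\theta)|$ divides $|H:I_H(\theta)|$) is false for general pairs of subgroups. The subsequent claims that $K$ centralizes $P'$ and that $H/{\bf C}_H(P')$ acts fixed-point-freely on $P'$ are therefore unsupported---and indeed the fact that $Q$ centralizes $P'$ is only established later, in the proof of Lemma~\ref{Proposition 2.3}, \emph{using} the conclusions of the present lemma. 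Second, your Step~1 is correct but unnecessary: $A$ and $P$ are normal subgroups of coprime order, so $[A,P]\le A\cap P=1$ with no character theory at all.
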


Recall that $A$ is the normal, abelian Hall $\rho$-complement of $G$.  Thus, Lemma \ref{Lemma 2.2} says that $A$ is a direct factor of $G$.  In particular, we may assume in this case that $A = 1$. If we take $A = 1$, then $H = Q \times (RS)$ where $Q$ is the Sylow $q$-subgroup, and $RS$ is a Hall $\{r, s \}$-subgroup.

\begin{proof}  Since ${\bf C}_{P^{'}}(H) = 1$, there is no character in $\NL(P)$ extending to $G$ (this is Lemma \ref{Lemma 2.1}).  By Corollary 8.16 of \cite{[4]}, this implies that there is no character in $\NL(P)$ which is invariant under $H$.  By the structure of $\Gamma$, there exists a character $\chi_{r} \in \Irr (G)$ with $\chi_{r} (1) = p^{a}r^{b}$, where $a, b $ are positive integers.  Let $\theta \in \Irr (P)$ be a constituent of $\chi_P$.  Observe that $\theta (1)_{p} = \chi_r (1)_{p}$, so $\theta \in \NL(P)$.  We know that $\theta$ is not $G$-invariant, so $I_G (\theta) < G$.  Since $|G:I_G (\theta)| = |H:I_H (\theta)|$ divides $\chi_r (1)$, we deduce that $|G:I_G (\theta)|$ is a power of $r$ and $I_{H} (\theta)$ contains a Hall $\{q, s \}\cup \rho^{'}$-subgroup of $G$, say $B$.

Now, every degree in $\cd {G \mid \theta}$ is divisible by $\theta
(1) |G:I_G (\theta)|$.  Hence, every degree in $\cd {G \mid \theta}$
has the form $p^\alpha r^\beta$.  By Clifford's theorem, $q$ and $s$
will not divide any degrees in $\cd {I_G (\theta) \mid \theta}$.  By
Corollary 8.16 of \cite{[4]}, $\theta$ extends to $I_G (\theta)$. We
now apply Gallagher's theorem to see that $q$ and $s$ divide no
degree in $\cd {I_G (\theta)/P} = \cd {I_H (\theta)}$.  In light of
the It\^o's theorem, $B$ is abelian and normal in $I_{H}(\theta)$.
We conclude that $[A, Q] = [A, S] = [Q, S] = 1$, where $Q$ and $S$
are some Sylow $q$- and Sylow $s$-subgroups of $H$, respectively.

Similarly, if we take $\chi_{s} \in \Irr(G)$ with $\chi_{s} (1) =
p^{c}s^{d}$, where $c$ and $d$ are positive integers, then we obtain
$[A, R] = [Q, R] = 1$, where $R$ is some Sylow $r$-subgroup of $H$.
Note that $H = AQRS$ and so $A$ and $Q$ are both central in $H$.  It
is clear that $[A, P] = 1$, and we conclude that $A \leq {\bf Z}
(G)$.
\end{proof}

We now show that ${\bf C}_{P'} (H) = 1$ cannot occur.

\begin{lemma} \label{Proposition 2.3}
Assume Hypothesis 2, then ${\bf C}_{P^{'}} (H) \neq 1$.
\end{lemma}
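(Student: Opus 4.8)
The plan is to argue by contradiction: assume ${\bf C}_{P'}(H)=1$ and derive an impossibility. By Lemma \ref{Lemma 2.2} and the remark following it we may take $A=1$, so $H=Q\times(RS)$ with $Q$ the Sylow $q$-subgroup of $H$, $Q\le{\bf Z}(H)$, and all Sylow subgroups of $H$ abelian. By Lemma \ref{Lemma 2.1}, $\cd{G}$ contains no nontrivial power of $p$. The first step is to observe that $Q$ fixes every $\theta\in\NL(P)$: for such a $\theta$, $\cd{G\mid\theta}$ contains a degree divisible both by $\theta(1)$ (a nontrivial $p$-power) and by $|H:I_H(\theta)|$; since $p$ and $q$ are nonadjacent in $\Gamma$ this forces $q\nmid|H:I_H(\theta)|$, and as $Q\lhd H$ we get $Q\le I_H(\theta)$.

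Next I would show $P$ has nilpotence class $2$. If $[P,Q]=1$, then together with the previous step $Q$ fixes all of $\Irr(P)$, so $Q$ centralizes $P$, hence $Q\le{\bf Z}(G)$ is a direct factor of $G$ and $q\notin\rho(G)$, a contradiction. So $[P,Q]\ne1$, and since $Q$ fixes every nonlinear character of $P$ this forces $Q$ to act nontrivially on $P/P'$, i.e.\ $q\in\rho(PQ)$. One checks $PQ\lhd G$ (using $P\lhd G$ and $Q\lhd H$), and $\Delta(PQ)$ is a subgraph of $\Gamma$ on $\{p,q\}$ containing no edge, so $\Delta(PQ)$ is disconnected with components $\{p\}$ and $\{q\}$. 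Thus $PQ$ is a disconnected group with a normal nonabelian Sylow $p$-subgroup and an abelian $p$-complement; running through the list of disconnected types this leaves only Type $1$ (Type $6$ is out since $\rho(PQ)$ has only two primes; Types $2$, $3$, $4$ have all normal Sylow subgroups abelian; Type $5$ is impossible because its distinguished nonabelian $2$-group is never a full Sylow subgroup, so ${\bf O}_2<$ Sylow $2$ there). Hence $PQ$ is of Type $1$ and $P$ has class $2$.

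Now $P'\le{\bf Z}(P)$, and every nonprincipal $\mu\in\Irr(P')$ is the central character on $P'$ of some $\theta\in\NL(P)$, which $Q$ fixes; hence $Q$ fixes $\mu$, so $Q$ fixes all of $\Irr(P')$, and $P'$ abelian gives $[P',Q]=1$. Therefore $1={\bf C}_{P'}(H)={\bf C}_{P'}(Q)\cap{\bf C}_{P'}(RS)={\bf C}_{P'}(RS)$, i.e.\ $RS$ acts fixed-point-freely on $P'$. Using that $r,s$ are nonadjacent in $\Gamma$ (and that $\cd{G\mid\theta}$ avoids both $q$ and $p$-powers) one gets, for each $\theta\in\NL(P)$, that $I_{RS}(\theta)<RS$ with $\rho(I_{RS}(\theta))\subseteq\{r\}$ or $\subseteq\{s\}$; so each $\theta$ is of ``$r$-type'' or ``$s$-type'', and both occur since $p$ is adjacent to both $r$ and $s$. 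Passing to central characters partitions $\Irr(P')\setminus\{1\}$ into two nonempty $RS$-invariant sets $Y_r$ and $Y_s$, disjoint because a character lying in both would be $RS$-invariant hence trivial, where each $\mu\in Y_r$ is fixed by some Sylow $s$-subgroup of $RS$ and each $\nu\in Y_s$ by some Sylow $r$-subgroup.

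To finish: $\Irr(P')$ equals the union of the subgroups $\langle Y_r\rangle$ and $\langle Y_s\rangle$, and since a group is never a union of two proper subgroups, one of them — say $\langle Y_r\rangle$ — is all of $\Irr(P')$. Every $\mu\in Y_r$ is trivial on $[P',S_\mu]$ for some Sylow $s$-subgroup $S_\mu$, so $\Irr(P')=\langle Y_r\rangle\le\Irr(P'/D)$ with $D=\bigcap_S[P',S]$ over Sylow $s$-subgroups $S$; hence $D=1$. On the other hand $Y_s\ne\varnothing$ forces ${\bf C}_{P'}(R)\ne1$ for every Sylow $r$-subgroup $R$ of $RS$. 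The aim is to collide these facts with ${\bf C}_{P'}(RS)=1$ and with the $RS$-module decomposition of $P'$ (no trivial constituent) to reach the contradiction. This last step is the main obstacle: extracting the contradiction from the interaction of the partition $\Irr(P')=\{1\}\sqcup Y_r\sqcup Y_s$, the Sylow-invariance of its pieces, and the fixed-point-free action; I expect that carrying it out cleanly will require analyzing the $\{r,s\}$-group $RS$ itself via the disconnected-group classification together with Lemma \ref{not square}, so as to pin down enough of the Sylow structure of $RS$ (for instance whether some Sylow subgroup is normal) to force ${\bf C}_{P'}(RS)\ne1$.
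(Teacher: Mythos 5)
Your opening is sound and runs parallel to the paper's: reducing to $A=1$, showing $Q$ fixes every $\theta\in\NL(P)$ and hence $[P',Q]=1$, identifying $PQ$ as disconnected of Type~1 so that $P'\le {\bf Z}(P)$, and concluding that $RS$ acts fixed-point-freely on $P'$. The dichotomy ``$r$-type versus $s$-type'' for the characters of $\NL(P)$ and the resulting partition of $\Irr(P')\setminus\{1\}$ into $Y_r$ and $Y_s$ are also correct. But the proof is not complete: you say yourself that extracting the final contradiction is ``the main obstacle,'' and the facts you have assembled at that point ($\bigcap_S[P',S]=1$ together with ${\bf C}_{P'}(R)\ne 1$ for every Sylow $r$-subgroup) do not contradict ${\bf C}_{P'}(RS)=1$ --- for example, if $R\lhd RS$ and $S$ acts fixed-point-freely on ${\bf C}_{P'}(R)\ne 1$, all three conditions can coexist. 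So the heart of the lemma is missing.

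The missing ingredients, which are exactly where the paper does its real work, are these. First, one applies the disconnected-group classification not to $RS$ but to $K/P'=(P/P')(RS)$: its graph has components $\{r\}$ and $\{s\}$ and all its Sylow subgroups are abelian, so it is of Type~4, which (after relabeling) yields a normal Sylow $r$-subgroup $R$ of $RS$ and hence $PR\lhd G$. Second, one proves the sharp dichotomy ${\bf C}_{P'}(R)\in\{1,P'\}$: if $1<{\bf C}_{P'}(R)<P'$, Fitting's lemma gives $P'=[P',R]\times {\bf C}_{P'}(R)$; picking $\xi\in\Irr([P',R])$ nonprincipal and $\theta\in\Irr(P\mid\xi)$ with $I_{RS}(\theta)=I_{RS}(\xi)$, one shows $I_{RS}(\theta)$ has a \emph{unique} (normal) Sylow $s$-subgroup $S$, and that for every $\zeta\in\Irr({\bf C}_{P'}(R))$ the stabilizer of $\xi\times\zeta$ must contain that same $S$; hence $S$, and therefore all of $H$, centralizes ${\bf C}_{P'}(R)$, forcing ${\bf C}_{P'}(R)=1$. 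Third, each alternative is killed separately: ${\bf C}_{P'}(R)=1$ forces every $p$-divisible degree to be divisible by $r$ (no $p^as^b$ degree), while ${\bf C}_{P'}(R)=P'$ combined with ${\bf C}_{P'}(H)=1$ forces ${\bf C}_{P'}(S)=1$ for all Sylow $s$-subgroups and hence every $p$-divisible degree to be divisible by $s$ (no $p^ar^b$ degree). Your instinct that the classification of disconnected groups and the normality of a Sylow subgroup of $RS$ would be needed was right, but without the dichotomy argument in the second step the contradiction does not materialize, so as written the proposal does not establish the lemma.
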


\begin{proof}
Suppose ${\bf C}_{P^{'}} (H) = 1$, we will find a contradiction and
so complete the proof.  {}From Lemma \ref{Lemma 2.2}, it follows
that $H^{'} \leq RS$.  Let $K = P(RS)$, it is clear that $K \lhd G$.
Consider the factor group $K/P^{'} \cong \left(P/P^{'}\right) (RS)$.
It is not hard to show that $\Delta(K/P^{'})$ has two connected
components $\{r\}$ and $\{s\}$ and by the Main Theorem of
\cite{[5]}, we get that $K/P^{'}$ is of Type 4, and so we may assume
that $R \lhd RS$.  Notice that this implies that $PR$ is normal in
$K$, and hence, $PR$ is normal in $G$.

We now prove that ${\bf C}_{P^{'}} (R)$ equals either $1$ or
$P^{'}$. Note that $Q \lhd H$, so that $PQ \lhd G$ and $PQ$ is
disconnected of Type 1.  In particular, $P^{'} \leq {\bf Z}(P)$ and
$P' \le {\bf C}_P (Q)$. If ${\bf C}_{P^{'}} (R) < P^{'}$, then by
Fitting's lemma, $P^{'} = [P^{'}, R] \times {\bf C}_{P^{'}} (R)$,
where $[P^{'}, R] \neq 1$.  Now, there exists a nonprincipal
character $\xi \in \Irr ([P^{'}, R])$, with $I_{R} (\xi) < R$.  This
implies that $r$ divides every degree in $\Irr (PR \mid \xi)$.  It
follows that $r$ divides every degree in $\Irr (K \mid \xi)$, and so
$|K:I_K (\xi)| = |RS:I_{RS} (\xi)|$ is a power of $r$.

On the one hand, using Theorem 13.28 of \cite{[4]}, there exists a
character $\theta \in \Irr (P \mid \xi)$ with $I_{RS} (\theta) \geq
I_{RS}(\xi)$.  On the other hand, with $P^{'} \leq {\bf Z}(P)$, we
have $\theta_{[P',R]} = \theta (1) \xi$, and it follows that $I_{RS}
(\theta) \leq I_{RS} (\xi)$.  We conclude $I_{RS} (\theta)= I_{RS}
(\xi)$.

Applying Corollary 8.16 of [4], $\theta$ extends to $I_G (\theta)$.
Observe that $s$ divides no degree in $\cd {I_H (\theta)}$.  By
Gallagher's theorem, $s$ divides no degree in $\cd {I_G (\theta)/P}
=\cd{I_H(\theta)} = \cd {I_{RS} (\theta)}$.  Applying It\^o's
theorem, $I_{RS} (\theta)$ contains a unique Sylow $s$-subgroup, say
$S$.

For every character $\zeta \in \Irr ({\bf C}_{P^{'}} (R))$, consider
$\xi \times \zeta \in \Irr(P^{'})$.  Observe that $I_{RS} (\xi
\times \zeta) = I_{RS} (\xi) \cap I_{RS}(\zeta)$.  Since $r$ divides
every degree in $\cd {K \mid \xi \times \zeta}$, we see that $s$
does not divide $|H:I_H (\xi \times \zeta)| = |RS:I_{RS} (\xi \times
\zeta)|$.  Thus, $I_{RS} (\xi \times \zeta)$ contains the unique
Sylow $s$-subgroup $S$ in $I_{RS} (\theta)$.  It follows that $S$
centralizes every character in $\Irr ({\bf C}_{P'} (R))$, and so,
$S$ centralizes ${\bf C}_{P'} (R)$.  Recall that $Q$ centralizes
$P'$, and obviously, $R$ centralizes ${\bf C}_{P'} (R)$.  We
conclude that $H$ centralizes ${\bf C}_{P'} (R)$.  By Hypothesis 2,
${\bf C}_{P^{'}} (H) = 1$, this forces ${\bf C}_{P^{'}}(R) = 1$.

We now work to obtain the final contradiction.  Suppose first we
have ${\bf C}_{P^{'}} (R) = 1$.  Then $P^{'}=[P^{'},\ R]$, and no
nonprincipal character in $\Irr (P^{'})$ is invariant in $R$.  If
$\theta \in \NL(P)$, then $\theta_{P'} = \theta (1) \xi$ for some
nonprincipal $\xi \in \Irr (P')$.  So $I_{R} (\theta) \leq I_{R}
(\xi) < R$.  This implies that every degree in $\cd G$ that is
divisible by $p$ is also divisible by $r$, and so, $\cd G$ will have
no degree of the form $p^{a}s^{b}$, where $a,\ b$ are positive
integers, a contradiction.  Therefore, ${\bf C}_{P^{'}} (R) =
P^{'}$.

Because ${\bf C}_{P^{'}} (H) = 1$, it must be ${\bf C}_{P^{'}} (S) =
1$, for every $S \in {\rm Syl}_{s} (H)$.  It follows if $\xi \in
\Irr(P^{'})$ is nonprincipal, then $I_{RS}(\xi)$ contains no Sylow
$s$-subgroup of $RS$, and so, for every character $\theta \in
\NL(P)$, $s \mid |RS:I_{RS}(\theta)|$.  It now follows that every
degree in $\cd G$ that is divisible by $p$ is also divisible by $s$,
which again contradicts the structure of $\Delta(G)$.
\end{proof}

Now, we also assume Hypothesis 2, and we know that case 1 happens.
It is clear that $H$ is nonabelian.   By Lemma \ref{Lemma 2.1},
there exists a character $\theta \in \NL (P)$, such that $\theta$ is
extendible to $G$.  Using Gallagher's theorem, $\cd {G \mid \theta}
= \{\theta (1) b \mid b \in \cd {G/P} \}$.  It follows that $\rho
(G/P) = \rho (H) \subseteq \{ r, s \}$ and in particular that $q$
divides no degree in $\cd H$.  {}From It\^o's theorem, we see that
$H$ has an abelian normal Sylow $q$-subgroup, say $Q$.  Notice that
either $|\rho (H)| = 1$ or $\rho (H) = \{ r, s \}$.  We consider two
cases.  The first case is $|\rho (H)| = 1$.  Without loss of
generality, we may assume $\rho (H) = \{ s \}$.

\begin{lemma} \label{Proposition 2.4}
Assume Hypothesis 2 and $\rho (H) = \{ s \}$.  Then $G = H_1 \times H_2 \times A$ where $H_1$ and $H_2 = {\bf O}^{\{r,s\}'} (G)$ are characteristic subgroups of $G$ and $H_1$ is disconnected of Type 1 with $\rho (H_1) = \{ p, q \}$ and $H_2$ is disconnected of Type 4 with $\rho (H_2) = \{ r, s \}$.
\end{lemma}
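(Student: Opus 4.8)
We are in the setting $G = PH$ with $P$ the normal nonabelian Sylow $p$-subgroup, $H$ a $p$-complement, and $\rho(H) = \{s\}$. By Lemma \ref{Proposition 2.3} we have ${\bf C}_{P'}(H) \ne 1$, and by the discussion following that lemma, $q$ divides no degree in $\cd{H}$ and $H$ has an abelian normal Sylow $q$-subgroup $Q$. The plan is to build the two factors separately. First I would show that ${\bf O}^{\{r,s\}'}(G)$ is a $\{r,s\}$-group with disconnected degree graph having components $\{r\}$ and $\{s\}$; since $\rho(H) = \{s\}$ forces $\Delta(G/P)$ to be a single vertex while $\Delta(G)$ is the square $\Gamma$, the primes $r$ and $s$ must "come from" the $\{r,s\}$-part acting on something, and by Lemma \ref{not square} / the Type 4 classification this part should be a Type 4 disconnected group $H_2$ with $\rho(H_2) = \{r,s\}$.

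More concretely, the key steps in order: (i) Use that $\theta \in \NL(P)$ extends to $G$ with $\cd{G\mid\theta} = \theta(1)\cd{G/P}$, so $\rho(G/P) = \rho(H) = \{s\}$; hence $H$ has an abelian normal Sylow $s$-complement $L$ (by It\^o applied inside $H$, since $s$ is the only prime dividing $\cd H$), and in particular $Q \le L$ with $L$ abelian normal in $H$ containing $A$ and $Q$. (ii) Identify $H_2 = {\bf O}^{\{r,s\}'}(G)$; since $A$ and $Q$ are normal in $G$ with $G/(PA Q)$ an $\{r,s\}$-group, $H_2$ is a $\{r,s\}$-group, and I must check $\rho(H_2) = \{r,s\}$ and that $\Delta(H_2)$ is disconnected — the edges $pr$ and $ps$ of $\Gamma$ together with the absence of an edge $rs$ and the absence of degrees divisible by $pr$-and-$ps$ simultaneously (no triangle) should force any degree of $G$ divisible by $rs$ to not exist, so $\Delta(H_2)$ has components $\{r\}$, $\{s\}$. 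Then by the classification in \cite{[5]} and by Lemma \ref{not square}, $H_2$ is of Type 4. (iii) Set $H_1$ to be a complement to $H_2 A$ containing $P$ — more precisely, show $P$ centralizes $H_2$ and $H_2$ centralizes $P$, so that $P \times (\text{Sylow } q)$ times... — actually I would argue $G = (P Q) \times H_2 \times (\text{leftover part of } A)$, recognizing $PQ$ (times a central piece) as disconnected of Type 1 with $\rho = \{p,q\}$: we already know from inside the proof of Lemma \ref{Proposition 2.3}'s surrounding discussion that $PQ \lhd G$ is of Type 1, so $P'$ is central in $P$ and $P' \le {\bf C}_P(Q)$.

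The main obstacle will be step (iii): proving the actual direct-product decomposition, i.e. that the $\{p,q\}$-part and the $\{r,s\}$-part genuinely centralize each other and split off $A$. To get $[P, H_2] = 1$ one wants: a character $\theta \in \NL(P)$ extending to $G$ gives, via Gallagher, degrees $\theta(1)\cdot c$ for all $c \in \cd{G/P}$; if some element of $H_2$ of order $r$ or $s$ acted nontrivially on $P$, one could manufacture a degree of $G$ divisible by $pr$ \emph{and} by $ps$ or by $rs$ in a way incompatible with $\Gamma$ being a square (no triangle). Dually, $[Q, H_2] = 1$ should follow since $\cd H$ is an $s$-power-only set, forcing $Q$ central in $H$ (stated after Lemma \ref{Proposition 2.3}), and then $Q$ commutes with the $\{r,s\}$-Hall subgroup of $H$. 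Finally $A \le {\bf Z}(G)$ — hence a direct factor — should come from an It\^o/Clifford argument as in Lemma \ref{Lemma 2.2}: using characters of degree $p^a r^b$ and $p^a s^b$ one forces $[A, H] = [A,P] = 1$. Assembling these centralizer facts and a Hall-subgroup coprimeness argument yields $G = H_1 \times H_2 \times A$ with $H_1$ of Type 1, $H_2$ of Type 4, and both characteristic (as ${\bf O}^{\{r,s\}'}(G)$ is characteristic, and $H_1$ is then determined as its "complement" together with $P$).
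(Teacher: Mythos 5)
Your step (i) matches the paper's opening move ($H = A\times Q\times RS$ with $A$ and $Q$ central in $H$, obtained from a degree $p^ar^b$ and Gallagher applied to $G/PR$), but steps (ii) and (iii) go wrong in a way that cannot be repaired. First, ${\bf O}^{\{r,s\}'}(G) = (RS)^G$ is \emph{not} an $\{r,s\}$-group here: since $pr$ is an edge of $\Gamma$ while $\rho(H)=\{s\}$, the Sylow $r$-subgroup $R$ must act nontrivially on $P$, so $(RS)^G$ contains the nontrivial $p$-group $[P,R]$. You have conflated $\rho(H_2)$ with $\pi(H_2)$; in the actual conclusion $H_2=[P,R]RS$ is a Type~4 group whose Fitting subgroup contains $[P,R]$, and $\rho(H_2)=\{r,s\}$ precisely because $R$ acts (fixed-point-freely enough) on $[P,R]$. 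Second, and fatally, your step (iii) sets out to prove $[P,H_2]=1$ by arguing that a nontrivial action of an $r$- or $s$-element on $P$ would create a forbidden degree. But $pr$ and $ps$ \emph{are} edges of the square, and they arise exactly from $RS$ acting nontrivially on $P$; if $RS$ centralized $P$ then $\cd{G}=\{ab: a\in\cd{P\times A Q},\ b\in\cd{RS}\}$ would make $p$ adjacent only to $s$ (if to anything), contradicting Hypothesis~1. So the statement you are trying to prove in (iii) is false.

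What is actually needed — and what your outline omits entirely — is a splitting of $P$ itself: one must show $P = C \times [P,R]$ with $C={\bf C}_P(R)\supseteq P'$, so that the nonabelian part $C$ (carrying the $p$-degrees) goes into $H_1=CQ$ while the abelian part $[P,R]$ goes into $H_2=[P,R]RS$. The paper gets this in three stages: (a) a Fitting-lemma/Clifford argument on ${\bf C}_{P'}(R)$ showing that $R$ and $S$ centralize $P'$ (otherwise every degree over the relevant characters is divisible by $pr$, killing the edge $ps$); (b) an application of Lemma \ref{not square} to $K/P'$ showing $|[P,R]P':P'|$ is not a square, so by the extend-or-fully-ramified dichotomy (Problem 6.12 of \cite{[4]}) every character of $P'$ extends to $[P,R]P'$, forcing $[P,R]P'$ abelian and hence $P=C\times[P,R]$; and (c) routine centralizer checks ($Q$ centralizes $[P,R]$ since $pq$ is a non-edge, $S$ centralizes $C$ from the Type 4 structure of $K/P'$). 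Stage (b) is the reason Lemma \ref{not square} exists in the paper, and no version of your no-triangle argument substitutes for it. Your characteristicity argument at the end ($(RS)^G$ characteristic, $H_1$ its centralizer in $PQ$) is essentially the paper's, but it only makes sense once the correct $H_1,H_2$ have been constructed.
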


\begin{proof}
By It\^o's theorem, $H$ has an abelian, normal Hall $s^{'}$-subgroup, and so, if we write $R$ for the Sylow $r$-subgroup and $Q$ for the Sylow $q$-subgroup, the Hall $s^{'}$-subgroup is $A \times R \times Q$.  This yields $H = (A \times R \times Q)S$ where $S$ is a Sylow $s$-subgroup of $H$.  We now have $PR \lhd G$.

We know there exists a character $\chi \in \Irr (G)$ with $\chi(1) = p^{a}r^{b}$, where $a,\ b$ are positive integers.  Observe that $\chi_{PR}$ is irreducible by Corollary 11.29 of \cite{[4]}.    Applying Gallagher's theorem, the only possible prime divisors of characters in $\cd {G/PR}$ are $p$ and $r$.  Since $p$ and $r$ do not divide $|G:PR|$, we deduce that $G/PR \cong H/R$ is abelian.  That is $H^{'} \leq R$.  We see that $[H,AQ] \le R \cap AQ = 1$, so $A$ and $Q$ are central in $H$.  This implies that $H = A \times Q \times RS$.  Since $A$ centralizes $P$, $A$ is central in $G$.  Also, since $S$ is isomorphic to a subgroup of $H/R$, we see $S$ is abelian.

Let $M = PQ$, and let $K = P(RS)$.  We note that $K$ and $M$ are
normal Hall subgroups of $G$.  It is not difficult to see that $\rho
(M) = \{ p, q \}$ and $\rho (K/P') = \{ r, s \}$. It follows that
$M$ and $K/P'$ are disconnected groups.  Obviously, $M$ is of Type
1, and recall that this implies that $P' \le {\bf Z}(P)$.  Since all
the Sylow subgroups of $K/P'$ are abelian, $K/P'$ is of Type 4.

By Lemma \ref{Proposition 2.3}, $1 \neq {\bf C}_{P^{'}} (H) \leq
{\bf C}_{P^{'}} (R)$.  Let $P_{0} = {\bf C}_{P^{'}} (R)$, so that
$P_{0} \neq 1$.  Suppose that $P_{0} < P^{'}$.  Next, we show that
there will be a contradiction and so prove that $P^{'} = {\bf
C}_{P^{'}} (R)$.

Since $P_0 \le P'$, we have that $P_0$ is central (and hence normal) in $P$.  We know that $Q$ centralizes $P'$, so $Q$ will normalize $P_0$.  Obviously, $R$ will normalize $P_0$.  Since $S$ normalizes $R$ and $P$, also $S$ will normalize $P_0$, and hence $P_0$ is normal in $G$.  We know that $\rho (G/P') = \{ q, r, s \}$.  Since $P_0 < P'$, we conclude that $\rho (G/P_0) = \rho (G)$, and we have seen that this implies that $\Delta (G/P_0) = \Delta (G)$.  In particular, it follows that $\Delta (K/P_0)$ has $p$ adjacent to both $r$ and $s$.

By Fitting's lemma, ${\bf C}_{P^{'}/P_{0}} (R) = 1$, so that for
every nonprincipal character $\lambda \in \Irr (P^{'}/P_{0})$, we
have that $r$ divides $|RS:I_{RS} (\lambda)|$.  Observe that
$P^{'}/P_{0} \leq {\bf Z}(P/P_{0})$, so that $r \mid |RS:I_{RS}
(\theta)|$, for every character $\theta \in \Irr (P/P_{0} \mid
\lambda)$.  And so, $r \mid \chi(1)$, for $\chi \in \Irr (G \mid
\lambda)$.  This shows that there is no character in $\Irr
(K/P_{0})$ with degree $p^{a}s^{b}$, where $a, b$ are positive
integers, a contradiction.  Thus, $R$ centralizes $P'$.  A similar
argument shows that $S$ centralizes $P'$.

Define $C = {\bf C}_P (R)$, and we just showed that $P' \le C$.
Since $R$ acts coprimely, we have $C/P' = {\bf C}_{P/P'} (R)$.  Set
$P_1 = [P,R] P'$.  By Fitting's lemma, $P/P' = P_1/P' \times C/P'$.
Since $K/P'$ is disconnected of Type 4, $P_1/P'$ is irreducible
under the action of $R$.

We now apply Lemma \ref{not square} in $K/P'$ to see that $P_1/P'
\cong P/C$ has order that is not a square.  We know that $P' \le
{\bf Z}(P)$ and $P'$ is centralized by $R$.  Also, $R$ acts
irreducibly on $P_1/P'$.  Applying Problem 6.12 of \cite {[4]}, we
see that every character in $\Irr (P')$ either extends to $P_1$ or
is fully ramified with respect to $P_1/P'$.  Since $|P_1:P'|$ is not
a square, we conclude that every character in $P'$ extends to $P_1$.
This implies that $P_1$ is abelian.  We now apply Fitting's lemma to
see that $P_1 = P' \times [P,R]$.  In particular, we have $P = C
\times [P,R]$.

Since $Q$ normalizes $P$ and $R$, we know that $Q$ normalizes $C$ and $[P,R]$.  Observe that if $Q$ acts nontrivially on $[P,R]$, then there will be some character in $\Irr (G)$ whose degree is divisible by $pq$.  Thus, $Q$ centralizes $[P,R]$.  This implies that $PQ = CQ \times [P,R]$.  If we write $H_1 = CQ$, then $\rho (CQ) = \rho (PQ) = \{ p, q \}$, and $CQ$ will be disconnected of Type 1.

We know that $R$ centralizes $C$.  Since $K/P'$ is disconnected of Type 4, it follows that $S$ centralizes $C/P'$.  We have already seen that $S$ centralizes $P'$, so this implies that $S$ centralizes $C$.  Hence, $K = PRS = C \times [P,R]RS$.
Letting $H_2 = [P,R]RS$, it is not difficult to see that $K/P' \cong C/P' \times H_2 P'/P'$ where $H_2 P'/P' \cong H_2$.  It follows that $\rho (H_2) = \{ r, s \}$ and $H_2$ is disconnected of Type 4.  Now, $H_1$ centralizes $H_2$, so $G = H_1 \times H_2 \times A$.

We now have that $(RS)^G \le H_2$.  Also, $[P,R]$ is normalized by
$H_1$, $H_2$, and $A$, so $[P,R]$ is normal in $G$.  Since it is
irreducible under the action of $R$, we see that $[P,R]$ is minimal
normal in $G$.  Hence, either $[P,R] \le (RS)^G$ or $[P,R] \cap
(RS)^G = 1$.  If $[P,R] \cap (RS)^G = 1$, then $R$ would centralize
$[P,R]$, a contradiction.  Therefore, $[P,R] \le (RS)^G$, and so,
$(RS)^G = H_2$.  This implies that $H_2$ is characteristic in $G$.
Observe that $H_1 = {\bf C}_{PQ} (H_2)$.  Since $PQ$ and $H_2$ are
characteristic in $G$, we conclude that $H_1$ is characteristic in
$G$.
\end{proof}

We now consider the second case: $\rho (H) = \{r, s\}$.

\begin{lemma} \label{Proposition 2.5}
Assume Hypothesis 2, and $\rho (H) = \{r, s\}$. Then $G = M \times
N$ where $M$ and $N = {\bf O}^{\{r,s\}'} (G)$ are characteristic
subgroups so that $\rho (M) = \{ p, q \}$ and $M$ is disconnected of
Type 1 and $P \le M$ and $\rho (N) = \{ r, s \}$ and $N$ is
disconnected of any type except Type 6.  Furthermore, if $N$ is not
of Type 4, then we may assume $M$ and $N$ are Hall subgroups.
\end{lemma}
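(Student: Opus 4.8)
The plan is to mimic the proof of Lemma~\ref{Proposition 2.4}, now carrying both $r$ and $s$ on the $H$-side. First I would record the basic structure. Since $r$ and $s$ are nonadjacent in $\Gamma$, no irreducible degree of $G$ is divisible by $rs$; as $\rho(G/P)=\rho(H)=\{r,s\}$ was established just before Lemma~\ref{Proposition 2.4}, it follows that $\Delta(H)$ is disconnected with components $\{r\}$ and $\{s\}$. Thus $H\cong G/P$ is a disconnected group, necessarily of a Type other than Type~6 (which requires at least three primes). Next, $H$ has an abelian normal Sylow $q$-subgroup $Q$, and since $P$ is normal one checks that $PQ\lhd G$ is a normal Hall $\{p,q\}$-subgroup. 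For $\chi\in\Irr(G)$ and $\psi\in\Irr(PQ)$ lying under $\chi$, the number $\chi(1)/\psi(1)$ divides $|G:PQ|$, a $q'$-number; taking $\chi$ with $q\mid\chi(1)$ (there is one, since $q\in\rho(G)$) gives $q\in\rho(PQ)$, so $\rho(PQ)=\{p,q\}$ and $PQ$ is disconnected. As $PQ$ has a normal nonabelian Sylow $p$-subgroup, it must be of Type~1; hence $P'\le{\bf Z}(P)$, $Q$ centralizes $P'$, and (since $pq$ is a nonedge) $Q$ acts nontrivially on $P/P'$. Also, since $pq$ is a nonedge of $\Gamma$, no $\theta\in\NL(P)$ can have $q\mid|G:I_G(\theta)|$, so every character in $\NL(P)$ is fixed by a Sylow $q$-subgroup of $G$.

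Second, I would show $A\le{\bf Z}(G)$. The point is that $\rho(G)=\rho=\{p,q,r,s\}$, so no irreducible degree of $G$ is divisible by any prime outside $\rho$. If $A$ moved some irreducible character of some Sylow subgroup $X$ of $G$ for a prime in $\rho$, then by Clifford's theorem $G$ would have an irreducible degree divisible by a prime dividing $|A|$, i.e.\ a prime of $\rho'$, a contradiction; hence $A$ centralizes every Sylow subgroup of $G$ for the primes in $\rho$, and being a normal Hall $\rho'$-subgroup this forces $A\le{\bf Z}(G)$. So we may absorb $A$ into the eventual factor $M$ and work with a Hall $\{q,r,s\}$-subgroup of $G$; write $L$ for a Hall $\{r,s\}$-subgroup of $H$.

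The crux is the third step: showing that $L$ centralizes $P$. I would first show $L$ centralizes $P'$, imitating the corresponding part of the proof of Lemma~\ref{Proposition 2.4}: if $R$ (resp.\ $S$) did not centralize $P'$, then $[P',R]\ne1$, and a nonprincipal, non-$R$-invariant $\xi\in\Irr([P',R])$ forces $r$ to divide every degree in $\Irr(PR\mid\xi)$; passing to $\Irr(PL\mid\xi)$ and using $P'\le{\bf Z}(P)$ exactly as in Lemma~\ref{Proposition 2.4}, one obtains a normal section of $G$ whose degree graph has full vertex set $\{p,q,r,s\}$ but with $p$ not adjacent to $s$, which is impossible. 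Hence $L$, and so $H=AQL$, centralizes $P'$; note $P'\ne1$ by Lemma~\ref{Proposition 2.3} and Lemma~\ref{Lemma 2.1}, so $P'={\bf C}_{P'}(H)$. By coprime action $P/P'=[P/P',L]\times{\bf C}_{P/P'}(L)$, and I would argue $[P/P',L]=1$. The mechanism: any nonprincipal $\lambda\in\Irr(P/P')$ is linear, so every $\chi\in\Irr(G\mid\lambda)$ has $p'$-degree; thus at most two of $q,r,s$ divide $\chi(1)$ and those two must be adjacent in $\Gamma$, and since $rs$ is a nonedge $\chi(1)$ cannot be divisible by both $r$ and $s$. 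Hence for every $\lambda$ the stabilizer $I_L(\lambda)$ contains a Sylow $r$-subgroup or a Sylow $s$-subgroup of $L$. Combined with the facts that $Q$ acts nontrivially on $P/P'$, that every $\theta\in\NL(P)$ is fixed by a Sylow $q$-subgroup, and that $H$ is disconnected of a known (non-Type-6) Type --- which restricts how $R$ and $S$ may act --- this should force $[P/P',L]=1$; then coprimality and $P'={\bf C}_{P'}(H)$ give $[P,L]\le P'$ and hence $[P,L]=1$. I expect this last forcing --- ruling out that exactly one of $R,S$ acts nontrivially on $P/P'$ --- to be the main obstacle, since the ``no-$rs$-edge'' condition alone is compatible with such a one-sided action; the resolution must invoke the precise structure of the disconnected group $H$.

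Finally, I would assemble the decomposition. With $P$ (and $A$) centralized by $L$, set $N={\bf O}^{\{r,s\}'}(G)$ and $M={\bf C}_G(N)$. Then $PQ\le M$ and $A\le M$, while $\rho(M)=\{p,q\}$ and $M$ has the normal nonabelian Sylow $p$-subgroup $P$, so $M$ is disconnected of Type~1 with $P\le M$; on the other side $\rho(N)=\{r,s\}$, so $N$ is disconnected of a Type other than~6, and $G=M\times N$ with both factors characteristic. For the Hall refinement: for each disconnected Type other than~4 one has $\pi(N)=\rho(N)=\{r,s\}$, so $M$ and $N$ are complementary Hall subgroups; but in Type~4 the relevant index $|N:E|$ (with $E/{\bf F}(N)={\bf F}(N/{\bf F}(N))$) is constrained only through the number theory of Lemmas~\ref{zsig} and \ref{not square}, which allows the prime $q$ to divide $|N|$, so there the Hall property need not hold --- this is exactly the exception recorded in the statement.
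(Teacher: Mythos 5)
There is a genuine gap, and you have partly misdiagnosed where the difficulty sits. The paper's proof runs a case analysis on the disconnected Type of $H$ (which is legitimate because $\Delta(H)$ is disconnected, so the classification of \cite{[5]} applies). In every case except one, the argument is essentially the soft one you sketch: one finds a normal subgroup $G_1 = PH_1$ with $p \notin \rho(G_1/P')$, reads off $P/P' \le {\bf Z}(G_1/P')$ from the structure of the Fitting subgroup of a disconnected group, and concludes via $P = {\bf C}_P(H_1)\Phi(P)$ that $H_1$ centralizes $P$. The one hard case is when $H$ is disconnected of Type 4 and the noncentral part $V = [{\bf F}(H),H]$ of its Fitting subgroup is a \emph{$q$-group} $Q_1$. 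There the obstruction is not ``exactly one of $R,S$ acts nontrivially on $P/P'$'' (your stated obstacle); it is that one must prove $[P,Q_1]=1$. Note that your endgame also needs this: $N = {\bf O}^{\{r,s\}'}(G) = (RS)^G$ necessarily contains $Q_1$ (since $R$ acts irreducibly and nontrivially on $Q_1$), so even after showing $[P,RS]=1$ you cannot split off $N$ as a direct factor with $\rho(N)=\{r,s\}$ unless $Q_1$ centralizes $P$. Your proposal explicitly leaves the forcing step as ``should force'' and ``I expect this to be the main obstacle,'' which is an acknowledgment that the core of the proof is missing. The paper's resolution of this case is substantial: assuming $[P,Q_1]\ne 1$, it passes to $P_1=[P,Q_1]$ and $G_2=P_1(Q_1RS)$, uses Maschke's theorem to pick an irreducible $Q_1$-quotient $P_1/P_2$ with kernel $Q_3$ of index $q$ in $Q_1$, shows via Clifford theory, Gallagher, and It\^o that a Sylow $s$-subgroup $S$ must centralize $Q_3$ while $R$ cannot fix $Q_3$, and then derives a contradiction from the Galois-action count $|Q_3| = q^b$ with $b \mid a$ versus $a = b+1$. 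None of this machinery appears in your outline.

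Two smaller points. First, your argument that $A \le {\bf Z}(G)$ is not valid as stated: Clifford's theorem controls $|G:I_G(\lambda)|$ for characters of \emph{normal} subgroups, and the fact that $\rho(G)=\rho$ does not by itself prevent $A$ from being moved, since $|G:I_G(\lambda)|$ for $\lambda \in \Irr(A)$ need only involve primes of $\rho$ (which is permitted). In the paper, centrality of $A$ falls out of the structural analysis of $H$ in each case ($A$ lies in ${\bf F}(H)$ and misses the noncentral $t$-part, hence $A \le {\bf Z}(H)$, and $[A,P]=1$ by coprimality of normal subgroups). Second, your Hall-refinement remark at the end is correct in spirit and matches the paper: the exception for Type 4 arises exactly because $[{\bf F}(N),N]$ may be a $q$-group.
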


\begin{proof}
We have that $\rho (H) = \{ r, s\}$.  Since $H \cong G/P$, we
see that $\Delta (H)$ is a subgraph of $\Delta (G)$.  In particular, $\Delta (H)$ is not connected.  Notice that groups of Type 6  have at least three primes, so $H$ cannot be of Type 6.

{}From It\^o's Theorem, $H$ has an abelian normal Sylow
$q$-subgroup, say $Q$.  It follows that $PQ \lhd G$. We note that $\Delta (PQ)$ has two connected components $\{ p \}$ and $\{ q \}$.  Since $h(PQ)=2$, $PQ$ is disconnected of Type 1.

Suppose for now that $H$ is disconnected and does not have Type 4.
The Fitting subgroups of disconnected groups of Types 1, 2, 3, and 5
all have the property that they are the direct product of a
$t$-subgroup and a central subgroup where $t \in \rho(H)$.  Thus,
$QA = Q \times A \leq {\bf Z}(H)$.

Since $H$ has a central Hall $\{r, s\}$-complement, $H$ has a
normal Hall $\{r, s \}$-subgroup, say $H_{1}$.  We have $H = AQ \times H_{1}$.  It is clear that $H_{1}$ is disconnected of the same Type as $H$.

Next, we consider the group $G_{1} = PH_{1}$. Observe that $G_1$ is
normal in $G$.  It is not difficult to see that $\Delta
(G_{1}/P^{'})$ has two connected components $\{ r \}$ and $\{ s \}$.
Notice that $G_1/P'$ has $H_1$ as a quotient.  A disconnected group
of Type 4 does not have any quotients of other disconnected types,
so $G_1/P'$ cannot be disconnected of Type 4 and since $\rho
(G_1/P')$, has only two primes, it is not disconnected of Type 6.
Because $p$ is not in $\rho (G_1/P')$, we have $P/P^{'} \leq {\bf
Z}(G_{1}/P^{'})$.  This implies that $H_1$ centralizes $P/P'$.
Hence, $P = {\bf C}_P (H_1) \Phi (P)$.  By the property of the
Frattini subgroup, we obtain $P = {\bf C}_P (H_1)$ and $H_1$
centralizes $P$.  We conclude that $G = PQ \times H_1 \times A$.  We
take $M = PQ$ and $N = H_1 A$.  Since $M$ and $N$ are normal Hall
subgroups, they are characteristic.

We now consider the case where $H$ is disconnected of Type 4.  Let $H = VL$ be the semi-direct product of $V$ with $L$ as given by the definition of groups of Type 4 where $t$ is the prime so that $V$ is a $t$-group.

Let $K = {\bf F}(L)$ and $Z = {\bf C}_{L} (V)$.  Since the connected
components of $\Delta (H)$ are $\{ r \}$ and $\{ s \}$, the index
$|K/Z|$ is a prime power, say $r^{a}$; and similarly, $|H:VK|$ is a
power of $s$.  Observe that $V$ is an elementary abelian $t$-group.
We note that $t \neq r$.

First, we assume $t \neq q$.  It is clear that $Q \leq {\bf Z}(H)$.
Let $H_1$ be a Hall $\{ q \}$-complement of $H$, so $H=H_{1} \times
Q$, and let $G_1 = PH_{1}$.  Observe that $H_1$ is disconnected of
Type 4.  Consider $G_1/P^{'}$. We note that $G_1 \lhd G$, so
$\Delta(G_1/P^{'})$ has two connected components $\{r \}$ and $\{s
\}$.  Thus, $G_1/P^{'}$ is of disconnected type.  Since $H_1$ is a
quotient of $G_1/P'$ and $H_1$ is of Type 4, we see that $G_1/P'$ is
Type 4, 5, or 6.  Since $\rho (G_1/P')$ is of size 2, $G_1/P'$ is
not of type 6, and since the Fitting subgroup of $G_1/P'$ is
abelian, it is not of Type 5.  Therefore, $G_{1}/P^{'}$ is of Type
4.  Finally, observe that $P/P^{'}\leq {\bf Z}(G_{1}/P^{'})$ since
we already know that the noncentral portion of the Fitting subgroup
is a $t$-group with $t \ne p$.  Now, $H_1$ centralizes $P/P^{'}$,
and as above, this implies that $H_1$ centralizes $P$.  Thus,
$G_{1}=P \times H_{1}$.  We conclude that $G = PQ \times H_{1}$.  We
take $M = PQ$ and $N = H_1$.  Since $M$ and $N$ are normal Hall
subgroups, they are characteristic.

Now, suppose $t = q$. Then $A \leq {\bf Z}(G)$.  Since $H$ is of
Type 4, $Q = V \times (Q \cap {\bf Z}(H))$.  Write $Q_{1} = V$ and
$Q_2 = Q \cap {\bf Z} (H)$.  Let $R$ be a Sylow $r$-subgroup of $H$.
Because $H$ is of Type 4, $Q_{1} = [Q, H] \geq [Q, {\bf F}(L)] = [Q,
R]$. Since $[Q, R] \lhd H$ and $Q_{1}$ is irreducible under the
action of $R$, we have $Q_{1}=[Q, R]$.  Write $G_{1} = P(Q_{1}RS)$
where $S$ is some Sylow $s$-subgroup of $H$ that normalizes $R$.
Since $A$ and $Q_2$ both normalize $G_1$, it follows that $G_{1}
\lhd G$.

Suppose that $[P, Q_{1}] = 1$.  Thus, $Q_1$ centralizes $P$.   In
particular, $Q_1$ is normal in $G_1$.  We see that
$\Delta(G_{1}/P^{'})$ has two connected components $\{ r \}$ and $\{
s \}$.  Thus, $G_1/P'$ is of disconnected type.  Arguing as in the
previous case, we conclude that $P/P^{'} \leq {\bf Z}(G_{1}/P^{'})$.
We deduce that $Q_1 RS$ will centralize $P$, and so, $G_{1} = P
\times (Q_{1}RS)$.  We conclude that $G = PQ_2 \times (Q_{1}RS)
\times A$. It is not difficult to see that $PQ_2$ is disconnected of
Type 1, and $Q_{1}RS$ is disconnected of Type 4.  Observe that $Q_1$
is a normal subgroup of $G$, and since $R$ acts irreducibly, it is
minimal normal in $G$.  It follows that $(RS)^G = Q_1 RS$.  It
follows that $N = Q_1 RS$ is a characteristic subgroup of $G$.
Observe that $PQ_2 A = {\bf C}_{PQA} (N)$ is characteristic since
$PQA$ and $N$ are characteristic in $G$.  Hence, we may take $M =
PQ_2 A$.

To prove the lemma, it suffices to show that $[P,Q_1] = 1$.   We
suppose that $[P, Q_{1}] \neq 1$, and we find a contradiction.
Observe that $\rho (G_{1}) = \rho (G)$, so that $\Delta (G_{1}) =
\Delta (G)$ is $\Gamma$.  It is clear that $PQ_{1}$ is disconnected
of Type 1, and so, $P^{'}=[P, Q_{1}]^{'} < [P, Q_{1}]$.  Now,
consider factor group $G_{1}/[P, Q_{1}]$.  We can see that $\Delta
(G_{1}/[P, Q_{1}])$ has two connected components: $\{ r \}$ and $\{
s \}$. And as in the previous cases, we conclude that $P/[P, Q_{1}]
\leq {\bf Z}(G_{1}/[P, Q_{1}])$, so that $[P, Q_{1}RS] \leq [P,
Q_{1}]$.

Write $P_{1} = [P, Q_{1}]$ and $G_{2} = P_{1} (Q_{1}RS)$.  Note that $G_{2} \lhd G_{1}$, since $(G_{1})^{'} \leq G_{2}$.  Observe that $P^{'} = [P, Q_{1}]^{'} = (P_{1})^{'} \neq 1$, so $P_{1}$ is nonabelian, and $[P_{1}, Q_{1}] = P_{1}$.  It is easy to see that $\rho(G_{2}) = \rho(G_{1}) = \rho(G)$, so that $\Delta(G_{2}) = \Delta (G_1) = \Delta(G)$.  Next, we consider the factor group $G_{2}/\Phi(P_{1})$ which is the semi-direct product of $P_{1}/\Phi(P_{1})$ acted on by $(Q_{1}RS)$.  By Fitting's Lemma, $P_{1}/\Phi(P_{1}) = [P_{1}/\Phi(P_{1}), Q_{1}]$.  Thus, no nonprincipal character in $\Irr (P_{1}/ \Phi(P_{1}))$ is invariant under the action of $Q_1$.

By Maschke's theorem, $P_{1}/\Phi(P_{1})$ is completely reducible
under the action of $Q_{1}$.  Choose $P_2$ with $\Phi (P) \le P_2 <
P_1$ so that $P_1/P_2$ is irreducible under the action of $Q_{1}$.
Let $Q_{3}$ be the kernel of the action of $Q_{1}$ on $P_1/P_2$.
Thus, $P_1/P_2$ is a faithful, irreducible module for $Q_{1}/Q_{3}$,
and so, $Q_{1}/Q_{3}$ is cyclic.  As $Q_{1}$ is an elementary
abelian $q$-group, $Q_{1}/Q_{3}$ is cyclic of order $q$.

Consider a nonprincipal character, $\lambda \in \Irr(P_1/P_2)
\subseteq \Irr(P_{1}/\Phi(P_{1}))$.  We know that $I_{Q_{1}}
(\lambda) < Q_{1}$, and so, $I_{Q_{1}RS} (\lambda) < Q_{1}RS$. Since
$Q_3$ is the kernel of the action of $Q_1$ on $P_1/P_2$, we have
$Q_3 \leq I_{Q_{1}}(\lambda)$.  Since $|Q_1:Q_3| = q$ and $I_{Q_1}
(\lambda) < Q_1$, we deduce that $Q_3 = I_{Q_1} (\lambda)$.

If $I_{Q_{1}RS} (\lambda)$ contains a full Sylow $r$-subgroup, then
we may assume by conjugating $\lambda$ that it contains $R$.  Hence,
$R$ will normalize $Q_3$, and this contradicts the fact that $Q_{1}$
is irreducible under the action of $R$.  Thus, $I_{Q_1 RS}
(\lambda)$ does not contain a full Sylow $r$-subgroup.  In
particular, $r$ divides $|G_2:I_{G_2} (\lambda)|$ (note that
$I_{G_2} (\lambda) = P_1 I_{Q_1 RS} (\lambda)$).  We see that
$\lambda$ extends to $I_{G_2} (\lambda)$.  Since $r$ and $s$ are not
adjacent in $\Delta (G_2)$, we conclude that $s$ does not divide
$|G_2:I_{G_2} (\lambda)|$ and $s$ divides no degree in $\cd {I_{G_2}
(\lambda) \mid \lambda}$.  Thus, $I_{Q_1 RS} (\lambda)$ contains a
full Sylow $s$-subgroup, and by conjugating $\lambda$, we may assume
$S \le I_{Q_1 RS} (\lambda)$.  Applying Gallagher's theorem, $s$
divides no degree in $I_{G_2} (\lambda)/P_1 \cong I_{Q_1 RS}
(\lambda)$.  By It\^o's theorem, $S$ is normal in $I_{Q_1 RS}
(\lambda)$.  It follows that $S$ centralizes $Q_3$.  Since $S$ does
not centralize $Q_1$, we have ${\bf C}_{Q_1} (S) = Q_3$.  Recall
that we may view $S$ as acting like a Galois group on a field whose
additive group is isomorphic to $Q_{1}$, and $Q_3$ will be the fixed
field under $S$.  Thus, if $|Q_{1}|=q^{a}$ and $|Q_{3}|=q^{b}$, then
$b \mid a$, which contradicts with $a=b+1$. Therefore, $[P, Q_{1}] =
1$, as desired.
\end{proof}

Since Lemmas \ref{Proposition 2.4} and \ref{Proposition 2.5} cover the only possibilities for Hypothesis 2, we can combine them to obtain.

\begin{theorem} \label{nonab p}
Suppose Hypothesis 2.  Then $G = H_1 \times H_2$ where $H_1$ and $H_2$ are characteristic subgroups of $G$ and disconnected groups where $\rho (H_1) = \{ p, q \}$ is of Type 1 and $\rho (H_2) = \{ r, s \}$ is of any type except Type 6.  Furthermore, if $H_2$ is not of Type 4, then we may assume $H_1$ and $H_2$ are Hall subgroups.
\end{theorem}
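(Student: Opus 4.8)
The plan is to obtain Theorem \ref{nonab p} by assembling Lemmas \ref{Proposition 2.4} and \ref{Proposition 2.5}, after recalling why between them they exhaust the situation. Under Hypothesis 2, Lemma \ref{Proposition 2.3} gives ${\bf C}_{P'}(H) \ne 1$, so by Lemma \ref{Lemma 2.1} some $\theta \in \NL(P)$ extends to $G$; Gallagher's theorem then yields $\rho(G/P) = \rho(H) \subseteq \{r,s\}$, and, as recorded in the discussion preceding Lemma \ref{Proposition 2.4}, It\^o's theorem together with the shape of $\Gamma$ forces either $|\rho(H)| = 1$ or $\rho(H) = \{r,s\}$. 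Since interchanging $r$ and $s$ leaves $\Gamma$ unchanged, in the first case we may assume $\rho(H) = \{s\}$. Thus one of Lemmas \ref{Proposition 2.4} and \ref{Proposition 2.5} applies.

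If $\rho(H) = \{r,s\}$, Lemma \ref{Proposition 2.5} delivers the statement directly: it produces characteristic subgroups $M$ and $N$ with $G = M \times N$, where $M$ is of Type 1 with $\rho(M) = \{p,q\}$, $N$ is disconnected of a type other than 6 with $\rho(N) = \{r,s\}$, and $M$, $N$ are Hall subgroups whenever $N$ is not of Type 4. One sets $H_1 = M$ and $H_2 = N$, and the ``furthermore'' clause is exactly the corresponding clause of Lemma \ref{Proposition 2.5}.

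If $\rho(H) = \{s\}$, Lemma \ref{Proposition 2.4} gives $G = H_1^{\circ} \times H_2 \times A$ with $H_1^{\circ}$ and $H_2$ characteristic, $H_1^{\circ}$ of Type 1 with $\rho(H_1^{\circ}) = \{p,q\}$, $H_2$ of Type 4 with $\rho(H_2) = \{r,s\}$, and $A$ central in $G$. I would absorb the abelian central factor $A$ into the Type 1 part: put $H_1 = H_1^{\circ} \times A$ and keep $H_2$. As $A$ is the unique Hall $\rho'$-subgroup of $G$, it is characteristic, so $H_1$ is characteristic; since $A$ is abelian and a $p'$-group, $\rho(H_1) = \rho(H_1^{\circ}) = \{p,q\}$, a Sylow $p$-subgroup of $H_1$ is a Sylow $p$-subgroup of $H_1^{\circ}$ (still normal, nonabelian, of class $2$), and a $p$-complement of $H_1$ is (a $p$-complement of $H_1^{\circ}$) $\times A$, still abelian; hence $H_1$ is of Type 1. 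Then $G = H_1 \times H_2$ with the required properties, and since $H_2$ is of Type 4 here the ``furthermore'' clause is vacuous.

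The only point requiring verification beyond quoting Lemmas \ref{Proposition 2.4} and \ref{Proposition 2.5} is this absorption of $A$ into the Type 1 factor, and it is routine: adjoining a central $p'$-group changes neither the Sylow $p$-subgroup nor its nilpotence class and keeps the $p$-complement abelian, so the defining properties of Type 1 are preserved. I do not anticipate any genuine obstacle, as the substance of the theorem lies entirely in the two lemmas.
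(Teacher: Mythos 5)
Your proposal is correct and follows essentially the same route as the paper, which derives the theorem in one line by observing that Lemmas \ref{Proposition 2.4} and \ref{Proposition 2.5} exhaust the possibilities under Hypothesis 2 (via the case split $|\rho(H)|=1$ or $\rho(H)=\{r,s\}$ established in the discussion preceding Lemma \ref{Proposition 2.4}). Your explicit verification that the central Hall $\rho'$-subgroup $A$ can be absorbed into the Type 1 factor without disturbing its defining properties or its being characteristic is a detail the paper leaves implicit, and it is handled correctly.
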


We also obtain a corollary about $G$ with $h(G)=2$.

\begin{corollary} \label{Lemma 3.2}
Suppose Hypothesis 1, and suppose $h(G) = 2$.  Then $G = H_1 \times H_2$ where $H_1$ and $H_2$ are characteristic Hall subgroups of $G$ and are disconnected groups of Type 1 with $\rho (H_1) = \{ p, q \}$ and $\rho (H_2) = \{ r, s \}$.
\end{corollary}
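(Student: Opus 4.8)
Write $F = {\bf F}(G)$; by hypothesis $G/F$ is nilpotent. The plan is to show that $G$ satisfies Hypothesis 2, then apply Theorem \ref{nonab p}, and finally use $h(G)=2$ to identify both direct factors as disconnected groups of Type 1.

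For the reduction to Hypothesis 2 I would use that $F$ and $G/F$ are both nilpotent. If some prime $t \in \rho(G)$ fails to divide $|G:F|$, then a Sylow $t$-subgroup of $G$ has the same order as a Sylow $t$-subgroup of $F$ and hence equals it; being characteristic in $F$ it is normal in $G$, and since $t \in \rho(G)$, It\^o's theorem makes it nonabelian, so by the Schur--Zassenhaus theorem $G$ satisfies Hypothesis 2. It therefore remains to rule out the case in which all of $p$, $q$, $r$, $s$ divide $|G:F|$. Assume this. Then $G/F = D_p \times D_q \times D_r \times D_s \times D_0$, where $D_t$ is the Sylow $t$-subgroup of $G/F$ for $t \in \{p,q,r,s\}$ and $D_0$ its Hall $\{p,q,r,s\}'$-subgroup; write $F = \prod_u F_u$ for the Sylow decomposition of $F$, each $F_u$ being characteristic in $F$ and hence normal in $G$. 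For every $t \in \{p,q,r,s\}$ the Sylow $t$-subgroup of $G$ is non-normal in $G$, since a normal one would, by It\^o's theorem, be nonabelian and again place us in Hypothesis 2.

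Next I would study the normal subgroup $K \lhd G$ that is the preimage of $D_r D_s D_0$, so that $F_p F_q$ is the normal Hall $\{p,q\}$-subgroup of $K$. For $\chi \in \Irr(K)$ lying over $\nu \in \Irr(F_pF_q)$, Clifford theory gives $\chi(1) = |K:I_K(\nu)|\,(\psi(1)/\nu(1))\,\nu(1)$ for some $\psi \in \Irr(I_K(\nu) \mid \nu)$, and since $|K:I_K(\nu)|$ and $\psi(1)/\nu(1)$ are $\{p,q\}'$-numbers, the $\{p,q\}$-part of $\chi(1)$ is exactly $\nu(1)$. Hence, were $F_p$ and $F_q$ both nonabelian, then taking $\nu = \nu_p \times \nu_q$ with $\nu_p \in \Irr(F_p)$ and $\nu_q \in \Irr(F_q)$ nonlinear, any $\chi \in \Irr(K \mid \nu)$ would be a character of $\Irr(K)$, hence of $\Irr(G)$, of degree divisible by $pq$, contradicting $p \not\sim q$ in $\Gamma$; so at most one of $F_p$, $F_q$ is nonabelian, and by the symmetric argument at most one of $F_r$, $F_s$ is nonabelian. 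From this point I would finish by a case analysis according to whether or not ${\bf F}(G)$ is abelian, using the non-normality of the four Sylow subgroups, the connectedness of $\Gamma$, It\^o's theorem, and Lemma \ref{Lemma 1} to reach a contradiction. I expect this last case analysis to be the main obstacle; in essence it amounts to showing that a group satisfying Hypothesis 1 with $h(G) = 2$ must have a normal nonabelian Sylow subgroup.

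Granting this, $G$ satisfies Hypothesis 2, so Theorem \ref{nonab p} yields $G = H_1 \times H_2$ with $H_1$ and $H_2$ characteristic, $H_1$ disconnected of Type 1 with $\rho(H_1) = \{p,q\}$, and $H_2$ disconnected of some type other than Type 6 with $\rho(H_2) = \{r,s\}$. Being disconnected, $H_1$ is not nilpotent, while being of Type 1 it has $H_1/{\bf F}(H_1)$ abelian, so $h(H_1) = 2$; thus $h(G) = \max\{h(H_1),h(H_2)\} = 2$ forces $h(H_2) \le 2$. But for a disconnected group of Type 2 or 3 the Fitting subgroup of its Fitting quotient is nonabelian (the quaternion group), so its Fitting height is at least $3$, and for a disconnected group of Type 4 or 5 its Fitting subgroup is properly contained in $E$ which is properly contained in the whole group, so again its Fitting height is at least $3$. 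Hence $H_2$ is of Type 1, and since $H_2$ is not of Type 4, Theorem \ref{nonab p} lets us take $H_1$ and $H_2$ to be Hall subgroups. This is precisely the conclusion of the corollary.
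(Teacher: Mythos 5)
Your reduction has a genuine gap at exactly the point you flag as ``the main obstacle'': you never rule out the case in which all four primes $p,q,r,s$ divide $|G:F|$. Everything up to that point is fine (a prime of $\rho(G)$ not dividing $|G:F|$ does yield a normal, and by It\^o's theorem nonabelian, Sylow subgroup, hence Hypothesis~2), and your observation that at most one of $F_p,F_q$ and at most one of $F_r,F_s$ can be nonabelian is correct, but it does not by itself produce a contradiction, and the promised case analysis is not carried out. Since the whole corollary hinges on getting into Hypothesis~2, the proof is incomplete as written.

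The paper closes this gap with a single appeal to the discussion preceding Theorem~19.6 of Manz--Wolf: for a solvable group there is a character degree divisible by \emph{every} prime dividing $|E:F|$ where $E/F={\bf F}(G/F)$, and since $h(G)=2$ gives $E=G$, some degree in $\cd G$ is divisible by every prime in $\pi(G:F)$. By It\^o's theorem the abelian normal Hall $\rho'$-subgroup lies in $F$, so $\pi(G:F)\subseteq\rho(G)$; and since $\Gamma$ is a square it has no triangle, so no degree is divisible by three primes of $\rho(G)$. Hence $|\pi(G:F)|\le 2$, and at least two primes of $\rho(G)$ have their Sylow subgroups inside $F$ --- in particular the case you could not eliminate simply cannot occur. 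I recommend replacing your unfinished Clifford-theoretic case analysis with this orbit-size argument. Your final paragraph (deducing from $h(G)=2$ that $H_2$ must be of Type~1, since Types~2--5 all force Fitting height at least~3, and then invoking the Hall-subgroup clause of Theorem~\ref{nonab p}) is correct and matches the paper's conclusion.
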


\begin{proof}  Let $F$ be the Fitting subgroup of $G$.  We may use the discussion before theorem 19.6 of \cite{[6]} to see that $\cd G$ contains a degree $a$ that is divisible by every prime divisor of $|G:F|$.  It follows that $|G:F|$ is divisible by at most two of the primes in $\rho (G)$.  Hence, $G$ must have a nonabelian normal Sylow $p$-subgroup for some prime $p$.  We then may apply Theorem \ref{nonab p} to see that $G = H_1 \times H_2$ where $H_1$ and $H_2$ are characteristic subgroups and are disconnected groups with $\rho (H_1) = \{ p, q \}$ and $\rho (H_2) = \{ r, s \}$.  We know $H_1$ is of Type 1.  If $H_2$ is not of Type 1, then $h(G) > 2$, a contradiction, so $H_2$ is also of Type 1.
\end{proof}

We also can characterize $G$ if it has more than one nonabelian Sylow subgroup.

\begin{corollary} \label{two nonab}
Suppose Hypothesis 1, and assume $G$ has more than one nonabelian normal Sylow subgroup.  Then $h (G) = 2$.
\end{corollary}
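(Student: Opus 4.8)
The plan is to apply Theorem~\ref{nonab p} to one of the two normal nonabelian Sylow subgroups and then pin down the disconnected type of each factor. Let $p_1$ and $p_2$ be distinct primes for which $G$ has normal nonabelian Sylow subgroups $P_1$ and $P_2$. Since $P_1\lhd G$, the Schur--Zassenhaus theorem supplies a Hall $p_1'$-complement, so Hypothesis~2 holds with $p=p_1$; applying Theorem~\ref{nonab p} we write $G=H_1\times H_2$ with $H_1$ and $H_2$ characteristic, $H_1$ disconnected of Type~1 with $\rho(H_1)=\{p_1,q_1\}$ (the nonadjacent pair of $\Gamma$ containing $p_1$), and $H_2$ disconnected with $\rho(H_2)=\{r_1,s_1\}$ (the other nonadjacent pair) of any type except Type~6.

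Next I would locate $P_2$. A Sylow $p_2$-subgroup of $G=H_1\times H_2$ is the internal direct product of a Sylow $p_2$-subgroup of $H_1$ and one of $H_2$, and it is normal in $G$ exactly when both of those factors are normal in the respective $H_i$. Since $P_2\lhd G$, both factors are normal, and since $P_2$ is nonabelian at least one of them is nonabelian. But $H_1$ is of Type~1, hence has an abelian $p_1$-complement, so every Sylow subgroup of $H_1$ for a prime other than $p_1$ lies in that complement and is abelian; as $p_2\ne p_1$, the Sylow $p_2$-subgroup of $H_1$ is abelian. Therefore the Sylow $p_2$-subgroup of $H_2$ is nonabelian and normal in $H_2$, so $H_2$ has a normal nonabelian Sylow subgroup, and in particular ${\bf F}(H_2)$ is nonabelian.

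Now I would use the classification of disconnected groups. Among the disconnected types only Types~1, 5 and~6 have a nonabelian Fitting subgroup, and $H_2$ is not of Type~6, so $H_2$ is of Type~1 or~5. To exclude Type~5: there ${\bf F}(H_2)=Q_0\times Z_0$ with $Q_0$ a nonabelian $2$-group and $Z_0={\bf Z}(H_2)$, so the only normal nonabelian Sylow subgroup $H_2$ can possess is its Sylow $2$-subgroup $S$ (for $\ell\neq 2$ a normal $\ell$-Sylow subgroup lies in ${\bf F}(H_2)$ and hence in $Z_0$, and so is abelian). Writing $F_2={\bf F}(H_2)$ and $E_2$ for the preimage in $H_2$ of ${\bf F}(H_2/F_2)$, we have $|H_2:E_2|=2$ while $|E_2:F_2|$ is odd, because $\{2\}$ and $\pi(E_2:F_2)$ are the two disjoint connected components of $\Delta(H_2)$; hence $|S|=|H_2|_2=2\,|F_2|_2>|F_2|_2$, so $S$ is not contained in $F_2={\bf F}(H_2)$, contradicting $S\lhd H_2$. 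Therefore $H_2$ is of Type~1.

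Finally, a disconnected group $H$ of Type~1 has $H/{\bf F}(H)$ abelian: its normal Sylow $p$-subgroup $P$ lies in ${\bf F}(H)$, so $H/{\bf F}(H)$ is a quotient of the abelian group $H/P$. Thus $h(H)\le 2$, and $h(H)\ge 2$ because a disconnected group is not nilpotent (the degree graph of a nilpotent group is complete, hence connected), so $h(H)=2$. Applying this to both $H_1$ and $H_2$ and using $h(H_1\times H_2)=\max\{h(H_1),h(H_2)\}$ (which follows from ${\bf F}(A\times B)={\bf F}(A)\times{\bf F}(B)$ by iteration of the Fitting series), we conclude $h(G)=2$. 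I expect the one genuinely non-routine step to be the exclusion of Type~5 for $H_2$; everything else is immediate once Theorem~\ref{nonab p} has been applied.
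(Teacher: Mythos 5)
Your proposal is correct and follows the same route as the paper: apply Theorem~\ref{nonab p}, observe that the second normal nonabelian Sylow subgroup forces $H_2$ to have one too, and conclude that $H_2$ must be of Type~1 since no other admissible disconnected type admits a normal nonabelian Sylow subgroup. The paper simply asserts that Types 2--5 have no normal nonabelian Sylow subgroup, whereas you verify this (notably the Type~5 exclusion via $|H_2:E_2|=2$); the content is the same.
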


\begin{proof}
Since $G$ has at least one nonabelian normal Sylow subgroup, we may apply Theorem \ref{nonab p} to see that $G = H_1 \times H_2$ where $H_1$ and $H_2$ are disconnected groups where $H_1$ has Type 1 and $H_2$ has any Type except Type 6.  However, Types 2, 3, 4, and 5 do not have any nonabelian normal Sylow subgroups.  Thus, the only way $G$ can have more than one nonabelian normal Sylow subgroup is if $H_2$ is also of Type 1, and in this case, $h(G) = 2$.
\end{proof}

\section{No normal nonabelian Sylow subgroups} \label{section 3}

We now consider groups where there are no normal nonabelian Sylow subgroups.  With this in mind, we make the following hypothesis that we study throughout this section.

\medskip
{\bf Hypothesis 3.} Assume Hypothesis 1, and suppose that for every prime $t$ belonging to $\rho(G)$, $G$ has no normal Sylow $t$-subgroup.  Let $F$ and $E/F$ be the Fitting subgroups of $G$ and $G/F$.

\medskip
We now consider a series of lemmas that study Hypothesis 3.  The first one shows that $G/\Phi (G)$ has the same character degree graph as $G$.

\begin{lemma} \label{Lemma 3.1} Assume Hypothesis 3,
then $\Delta (G) = \Delta (G/\Phi (G))$, where $\Phi (G)$ is the Frattini subgroup of $G$.
\end{lemma}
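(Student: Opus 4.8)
The plan is to use Lemma \ref{Lemma 1} to reduce the assertion to a statement about vertex sets. Since $\Phi(G)$ is normal in $G$, every degree in $\cd{G/\Phi(G)}$ is a degree in $\cd{G}$, so $\Delta(G/\Phi(G))$ is a subgraph of $\Delta(G) = \Gamma$. Because $G/\Phi(G)$ is solvable, Lemma \ref{Lemma 1} tells us that $\Delta(G/\Phi(G))$ is either equal to $\Gamma$ or has at most three vertices. Hence it is enough to prove $\rho(G/\Phi(G)) = \rho(G) = \{p,q,r,s\}$; once we know this, $\Delta(G/\Phi(G))$ is a subgraph of $\Gamma$ on all four vertices, so Lemma \ref{Lemma 1} forces $\Delta(G/\Phi(G)) = \Gamma = \Delta(G)$.

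So I would fix $t \in \rho(G)$ and aim to show $t \in \rho(G/\Phi(G))$. First I would show that $G/\Phi(G)$ has no normal Sylow $t$-subgroup. Suppose it did, say $P\Phi(G)/\Phi(G) \lhd G/\Phi(G)$ where $P$ is a Sylow $t$-subgroup of $G$. Since $\Phi(G)$ is nilpotent, its Sylow $t$-subgroup $\Phi_t$ is characteristic in $\Phi(G)$, hence normal in $G$, so $\Phi_t \le P$; writing $\Phi_{t'}$ for the Hall $t'$-complement of $\Phi(G)$ (also characteristic in $\Phi(G)$, hence normal in $G$), we obtain $P\Phi(G) = P\Phi_{t'} \lhd G$. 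Now $P$ is a Sylow $t$-subgroup of the normal subgroup $P\Phi_{t'}$, so the Frattini argument gives $G = (P\Phi_{t'})\,N_G(P) = \Phi_{t'}\,N_G(P)$; since $\Phi_{t'} \le \Phi(G)$ consists of nongenerators, this forces $G = N_G(P)$, i.e. $P \lhd G$, contradicting Hypothesis 3. Thus $G/\Phi(G)$ has no normal Sylow $t$-subgroup, and in particular no normal abelian Sylow $t$-subgroup. Applying the theorem of It\^o (the solvable case of the It\^o--Michler theorem, which needs no classification input), some irreducible character of $G/\Phi(G)$ has degree divisible by $t$, so $t \in \rho(G/\Phi(G))$.

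Since $t$ was arbitrary, $\rho(G/\Phi(G)) = \rho(G)$, and the reduction in the first paragraph completes the argument. The only step carrying real content is the Frattini argument showing that non-normality of the Sylow $t$-subgroup is inherited by $G/\Phi(G)$; the splitting $\Phi(G) = \Phi_t \times \Phi_{t'}$, available because $\Phi(G)$ is nilpotent, is precisely what makes that argument go through. One should also be careful to invoke only the solvable version of the It\^o--Michler theorem, so that the proof remains free of the classification of finite simple groups, in keeping with the rest of the paper.
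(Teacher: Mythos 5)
Your proposal is correct and follows essentially the same route as the paper: both reduce to showing $\rho(G/\Phi(G)) = \rho(G)$ via the solvable It\^o--Michler theorem and a Frattini argument on $P\Phi(G) \lhd G$ (your decomposition $\Phi(G) = \Phi_t \times \Phi_{t'}$ is harmless but unnecessary, since $P$ is already a Sylow $t$-subgroup of $P\Phi(G)$ and the nongenerator property of $\Phi(G)$ finishes directly). The only cosmetic difference is that you prove the contrapositive (no normal Sylow subgroup in the quotient, hence $t \in \rho(G/\Phi(G))$) where the paper assumes $t \notin \rho(G/\Phi(G))$ and derives the normality contradiction.
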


\begin{proof}
It is clear that $\rho (G/\Phi(G)) \subseteq \rho(G)$.  Suppose
there exists a prime $p \in \rho(G) \setminus \rho(G/\Phi(G))$.  By
It\^o's theorem, $G/\Phi(G)$ has an abelian, normal Sylow
$p$-subgroup, say $P\Phi(G)/\Phi(G)$, where $P \in Syl_{p}(G)$.
Using the Frattini Argument, $G = {\bf N}_G (P) (P \Phi (G)) = {\bf
N}_G (P) \Phi (G)$, so $G = {\bf N}_G (P)$.  We obtain $P \lhd G$,
contradicting Hypothesis 3.  Now, we get $\rho(G) = \rho(G/\Phi(G))$
and so $\Delta (G) = \Delta (G/\Phi(G))$.
\end{proof}

By using Lemma \ref{Lemma 3.1} to determine the structure of $G$, we discuss groups $G$ with a trivial Frattini subgroup first.  Then we use the results of $G/\Phi(G)$ to get the structure of $G$.

If $h(G) = 2$, then we showed in Corollary \ref{Lemma 3.2}, that the desired conclusion holds.  Using Theorem B of \cite{[1]}, we know that $h(G) \le 4$ for groups satisfying Hypothesis 1, thus, we just need to consider $G$ with $h(G) = 3$ and $4$.  In particular, we have $E < G$. The next lemma specifically considers $h(G) = 3$.

\begin{lemma} \label{Lemma 3.3}
Assume Hypothesis 3, and suppose that $h(G) = 3$.  Then the following hold:
\begin{enumerate}
\item $|E/F|$ has two distinct prime divisors say $\{p, r\}$.
\item $E/F$ is the Hall $\{p, r\}$-subgroup of $G/F$.
\item $G/F$ has an abelian Hall $\{q, s\}$-subgroup.
\end{enumerate}
\end{lemma}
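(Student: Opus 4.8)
The plan is to reduce to the case $\Phi(G)=1$ and then to locate each of the four primes of $\rho(G)$ in the Fitting series, using repeatedly that $\Gamma$ contains no triangle (so no irreducible degree of $G$ is divisible by three primes of $\rho(G)$) and that neither of its non-edges $\{p,q\}$ and $\{r,s\}$ divides any degree in $\cd G$.

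\emph{Reduction and preliminaries.} By the Frattini argument $G/\Phi(G)$ has no normal Sylow $t$-subgroup for any $t\in\rho(G)$, hence satisfies Hypothesis~3; by Lemma~\ref{Lemma 3.1} its degree graph is again $\Gamma$; and since $\mathbf{F}(G/\Phi(G))=F/\Phi(G)$, it has Fitting height $3$ with the same quotients $G/F$ and $E/F$ as $G$. As statements (1)--(3) concern only $G/F$ and $E/F$, I may assume $\Phi(G)=1$; then $F$ is the direct product of the minimal normal subgroups of $G$, hence abelian, and $\mathbf{C}_G(F)=F$. Next, $\pi(G/F)=\rho(G)=\{p,q,r,s\}$: the abelian normal Hall $\rho'$-subgroup $A$ lies in $F$, so $\pi(G/F)\subseteq\rho(G)$, while if some $t\in\rho(G)$ missed $\pi(G/F)$, the Sylow $t$-subgroup of $G$ would lie in $F$ and be normal, against Hypothesis~3. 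Finally, since $G/F$ is not nilpotent we have $E/F<G/F$, together with $\mathbf{C}_{G/F}(E/F)\le E/F$ and $G/E$ a nontrivial nilpotent group; so $\pi(E/F)$ and $\pi(G/E)$ are nonempty with union $\{p,q,r,s\}$.

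\emph{Proof of (1).} First, $\pi(E/F)$ is a clique of $\Gamma$. If $a,b\in\pi(E/F)$ are distinct, let $P_a/F$ and $P_b/F$ be the Sylow $a$- and $b$-subgroups of the nilpotent group $E/F$; they are normal in $E/F$ and in $G/F$, so the preimages $P_a,P_b$ are normal in $G$ with $P_a>F$ and $P_b>F$, whence (using $\mathbf{C}_G(F)=F$) neither centralizes the abelian group $F$. Then the characters of $F$ fixed by $P_a$, respectively by $P_b$, form proper subgroups of $\Irr(F)$ whose union is not all of $\Irr(F)$, so some $\theta\in\Irr(F)$ is moved by both. Since the Sylow $a$- and $b$-subgroups of $E/F$ are normal, $a$ and $b$ both divide $|E:I_E(\theta)|$, hence $ab$ divides every degree in $\cd{G\mid\theta}$; so $\{a,b\}$ is an edge. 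This gives $|\pi(E/F)|\le2$. To rule out $|\pi(E/F)|=1$, suppose $E/F$ is an $\ell$-group; then $\{p,q,r,s\}\setminus\{\ell\}\subseteq\pi(G/E)$ contains a non-edge $\{a,b\}$ of $\ell'$-primes, and the Sylow $a$- and $b$-subgroups of the nilpotent group $G/E$ act faithfully on $E/F$ (since $\mathbf{C}_{G/F}(E/F)\le E/F$), hence faithfully on $V:=(E/F)/\Phi(E/F)$; as above their fixed subspaces on $\Irr(V)$ are proper, so some linear $\lambda\in\Irr(E/F)$ inflated from $\Irr(V)$ is moved by both, and inflating $\lambda$ to a character $\mu\in\Irr(E)$ trivial on $F$ we get $ab\mid|G:I_G(\mu)|$, so $ab$ divides a degree of $G$ --- impossible. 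Hence $|\pi(E/F)|=2$; relabelling, $\pi(E/F)=\{p,r\}$, necessarily an edge of $\Gamma$.

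\emph{Proof of (2) and (3), and the obstacle.} Write $P_0/F,R_0/F$ for the Sylow $p$- and $r$-subgroups of $E/F$, and $\bar Q,\bar S,\bar P_1,\bar R_1$ for the Sylow $q$-, $s$-, $p$- and $r$-subgroups of the nilpotent group $G/E$, each acting faithfully on $E/F$, the first two coprimely. For (2) I show $\bar P_1=\bar R_1=1$. Suppose $\bar P_1\neq1$. If $\bar P_1$ centralized $R_0/F$, then the preimage in $G/F$ of $\bar P_1$ together with $E/F$ would be a nilpotent normal subgroup of $G/F$ strictly larger than $E/F=\mathbf{F}(G/F)$ --- impossible; so $\bar P_1$ acts nontrivially, hence (coprimely) nontrivially on $(R_0/F)/\Phi(R_0/F)$. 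Playing $\bar P_1$ against $\bar Q$ by the device of (1): were $\bar Q$ nontrivial there too we would get a degree divisible by the non-edge $\{p,q\}$; so $\bar Q$ centralizes $R_0/F$, hence acts faithfully on $P_0/F$ and nontrivially on $(P_0/F)/\Phi(P_0/F)$; the same device forces $\bar P_1$ to centralize $(P_0/F)/\Phi(P_0/F)$, and a further analysis --- passing to $G/R_0$, which still has degree graph $\Gamma$ since $\rho(G/R_0)=\rho(G)$, and comparing its Fitting subgroup against (1) --- yields the contradiction that the preimage of $\bar P_1$ is nilpotent and strictly contains $E/F$. Symmetrically $\bar R_1=1$, so $\pi(G/E)=\{q,s\}$ and $E/F$ is the Hall $\{p,r\}$-subgroup of $G/F$, which is (2); consequently a Hall $\{q,s\}$-subgroup $T/F$ of $G/F$ meets $E/F$ trivially and maps isomorphically onto $G/E\cong\bar Q\times\bar S$. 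For (3), if $T/F$ were nonabelian then $\bar Q$ or $\bar S$, say $\bar Q$, would be nonabelian and $\bar Q'\neq1$ would act faithfully on $E$; exploiting the coexistence in $\cd G$ of degrees divisible by $qr$ and by $qs$ but of none divisible by $pq$ or $rs$, together with the nonabelian structure of $\bar Q'$, one produces a degree divisible by a non-edge --- a contradiction. The obstacle in (2) and (3) is precisely that $\bar P_1$ acts on the $p$-part $P_0/F$ non-coprimely, so the clean ``two non-adjacent primes fixing a common character'' trick only controls Frattini quotients; pushing the conclusions down to $P_0/F$ itself, and dealing with a possibly nonabelian $R_0/F$, is what forces the extra bookkeeping along the Fitting series and inside the auxiliary quotients.
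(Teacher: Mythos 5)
Your reduction to $\Phi(G)=1$ and your proof of part (1) are correct, and the clique argument for $\pi(E/F)$ (normal Sylow subgroups of $E/F$ acting on the abelian group $F$, with a common moved character because $\Irr(F)$ is not the union of two proper subgroups) is a legitimate, more self-contained alternative to the paper's citation of the Manz--Wolf result. The problem is that parts (2) and (3) are not actually proved. For (2) you reduce to showing that the Sylow $p$-subgroup $\bar P_1$ of $G/E$ is trivial, correctly observe that your ``common moved character'' device only controls the action of $\bar P_1$ on the Frattini quotient of the $p$-part $P_0/F$ because the action is not coprime, and then assert that ``a further analysis --- passing to $G/R_0$ \ldots --- yields the contradiction.'' That analysis is the entire content of the claim and it is missing; moreover the step as described is doubtful, since it is not clear that $\rho(G/R_0)=\rho(G)$ (the characters of $G$ whose degrees are divisible by $r$ may all be nontrivial on $R_0$), and at the point where you invoke it you have already forced $Y$ to act nontrivially on $R_0/F$, so deducing that the preimage of $\bar P_1$ is nilpotent would require undoing that, not just quoting (1). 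Part (3) is likewise only gestured at (``one produces a degree divisible by a non-edge''). So the crux of (2) and (3) is a genuine gap, one you yourself flag as ``the obstacle'' without overcoming it.

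The missing idea is that one should bound $\pi(G/E)$ the same way one bounds $\pi(E/F)$, not by analyzing the action prime by prime. Since $h(G)=3$, the group $G/F$ has Fitting height $2$ with Fitting subgroup $E/F$, and the result quoted before Theorem 19.6 of Manz--Wolf produces a single $\psi\in\Irr(G/F)\subseteq\Irr(G)$ whose degree is divisible by every prime of $\pi(G/E)$; since $\Gamma$ has no triangle, $|\pi(G/E)|\le 2$. Combined with your $|\pi(E/F)|\le 2$ and $\pi(E/F)\cup\pi(G/E)=\{p,q,r,s\}$, both sets have exactly two elements and are disjoint, which is (1) and (2) at once --- no case analysis on $\bar P_1$ is needed. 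Then (3) is short: take $\varphi\in\Irr(G)$ with $\varphi(1)=p^ar^b$; since $|G:E|$ is a $\{q,s\}$-number, $\varphi_E$ is irreducible, so its restriction extends to $G$, and Gallagher gives $\varphi(1)\beta(1)\in\cd G$ for every $\beta\in\Irr(G/E)$; any $\beta(1)>1$ would create a degree divisible by three primes of $\rho(G)$, so $G/E$ is abelian and hence so is the Hall $\{q,s\}$-subgroup of $G/F$. I recommend you replace your treatment of (2) and (3) by this argument.
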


\begin{proof}
We know that $\pi(G/F) = \rho(G) = \{p, q, r, s\}$.  Note that $\pi(G/F) = \pi(E/F) \cup \pi(G/E)$.  Using the discussion before Theorem 19.6 of \cite{[6]}, there exists a character $\chi \in \Irr (G)$ whose degree is divisible by every prime in $\pi(E/F)$; and a character $\psi \in \Irr (G)$ whose degree is divisible by every prime in $\pi(G/E)$.  Looking at $\Delta(G)$, we deduce that each of $|E:F|$ and $|G:E|$ are divisible by 2 primes.  From this, one can conclude that $E/F$ is a Hall subgroup of $G/F$, and without loss of generality, we may assume that the primes dividing $|E:F|$ are $p$ and $r$.

Choose a character $\varphi \in \Irr (G)$ with $\varphi(1) =
p^{a}r^{b}$, where $a$ and $b$ are positive integers. Let $\theta$
be an irreducible constituent of $\varphi_{E}$.  Observe that
$\theta$  extends to $\varphi$ on $G$. By Gallagher's theorem, we
obtain $\varphi (1) a \in \cd G$ for every degree $a \in \cd {G/E}$.
However, since $a$ divides $|G:E|$, we can use the structure of
$\Delta(G)$, to see that $a = 1$.  Thus, $G/E$ is abelian, which
implies $G/F$ has an abelian Hall $\{q, s\}$-subgroup.
\end{proof}

We now consider Hypothesis 3 with the additional condition that $\Phi (G) = 1$.  This next lemma is really just a restatement of Gash\"utz's theorem.

\begin{lemma} \label{comp}
Assume Hypothesis 3 and suppose $\Phi (G) = 1$.  Then there exists a complement $L$ for $F$ in $G$ and $F = [F,E] \times Z$ where $E$ centralizes $Z$ and $E \cap L = {\bf F}(L)$ acts faithfully on $[F,E]$.
\end{lemma}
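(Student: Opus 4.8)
The plan is to invoke Gasch\"utz's splitting theorem together with the standard coprime decomposition of the Fitting subgroup, so the bulk of the work is in verifying the hypotheses of Gasch\"utz's theorem and then translating the abstract conclusion into the stated form. First I would observe that $F = {\bf F}(G)$ is nilpotent and that, since $\Phi(G) = 1$, the Frattini subgroup of $F$ is trivial as well (because $\Phi(F) \le \Phi(G)$ for $F \lhd G$); hence $F$ is a direct product of elementary abelian groups, in particular $F$ is abelian. Now $G/F$ acts on $F$, and the key point is that $F$ is a completely reducible $G$-module: since $\Phi(F) = 1$, $F$ is a direct sum of simple $\mathbb{F}_p[G/F]$-modules as $p$ ranges over $\pi(F)$. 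Gasch\"utz's theorem (see Satz I.17.4 of Huppert, or Theorem 3.3.2-type statements) then guarantees that $F$, being abelian with $\gcd(|F|,|G/F|)$ not necessarily $1$ but with $F$ self-centralizing is not needed — what we need is precisely that a normal abelian subgroup $F$ with $\Phi(F)=1$, wait, the cleanest route is: an abelian normal subgroup complemented in $G$ iff it is complemented in a Sylow-type local sense; but since $\Phi(G) = 1$ we may instead argue directly that $F$ has a complement $L$ in $G$.

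Concretely, I would split into two observations. (a) \emph{Existence of a complement.} Because $\Phi(G) = 1$, write $F = F_1 \times \cdots \times F_k$ as a product of minimal normal subgroups of $G$ (using $\Phi(F) = 1$ so $F$ is a product of elementary abelian $p$-groups, each of which is a sum of chief factors, and each chief factor below $F$ is complemented in $G$ modulo the rest precisely because $\Phi(G)=1$). Building up a complement one chief factor at a time, or simply quoting Gasch\"utz's theorem in the form ``if $A \lhd G$ is abelian and $A \cap \Phi(G) = 1$ then $A$ is complemented in $G$'' (which applies since $F \cap \Phi(G) = \Phi(G)\cap F = 1$), we obtain a subgroup $L \le G$ with $G = FL$ and $F \cap L = 1$. (b) \emph{Decomposition of $F$ relative to $E$.} Since $F$ is abelian and $E/F$ acts on it coprimely or not, I would instead use that $E$ acts on the abelian group $F$ and apply Fitting's lemma with respect to the action of $E$ (noting $F \le {\bf Z}(F)$ so $F$ is an $E/F$-module, but $E$ itself acts through $E/F$ only if $F$ centralizes itself, which it does since $F$ is abelian): writing the action of $E$ on $F$, Fitting's lemma gives $F = [F,E] \times {\bf C}_F(E)$. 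Set $Z = {\bf C}_F(E)$; then $E$ centralizes $Z$ by construction, and $[F,E]$ is $E$-invariant with $E$ acting on it with trivial fixed points.

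For the final claims, identify $E \cap L$. Since $G = FL$ and $F \le E$, Dedekind's law gives $E = F(E \cap L)$, and $F \cap (E \cap L) = 1$, so $E \cap L$ is a complement to $F$ in $E$. Because $E/F$ is nilpotent (it is the Fitting subgroup of $G/F$) and $E \cap L \cong E/F$, the subgroup $E \cap L$ is nilpotent and normal in $L$ (as $E \lhd G$ forces $E \cap L \lhd L$); I would then check it is the full Fitting subgroup of $L$ by noting $L \cong G/F$ and ${\bf F}(L)$ corresponds to ${\bf F}(G/F) = E/F$ under this isomorphism, giving ${\bf F}(L) = E \cap L$. Finally, faithfulness of the action of $E \cap L$ on $[F,E]$: the kernel of the action of $E$ on $[F,E]$ together with $Z$ centralizes all of $F = [F,E]\times Z$, hence lies in ${\bf C}_G(F)$; but ${\bf C}_G(F) \le F$ since $F$ is the Fitting subgroup and $G$ is solvable (a self-centralizing property of the Fitting subgroup), so the kernel of $E$ on $[F,E]$ is contained in $F$, whence its image in $E \cap L$ is trivial. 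Thus $E \cap L = {\bf F}(L)$ acts faithfully on $[F,E]$.

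The main obstacle is getting the existence of the complement cleanly: one must be careful that $\Phi(G) = 1$ does not by itself force $F$ to be complemented for arbitrary solvable $G$ unless $F$ is abelian, so the first order of business is to nail down $\Phi(F) \le \Phi(G) = 1$ and hence $F$ abelian (indeed a product of elementary abelian subgroups), after which Gasch\"utz's theorem applies verbatim. The remaining identifications — matching $E \cap L$ with ${\bf F}(L)$ and deducing faithfulness from ${\bf C}_G(F) \le F$ — are routine once the complement is in hand.
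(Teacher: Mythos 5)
Your overall route is the same as the paper's: Gasch\"utz's theorem to produce the complement $L$, a direct decomposition of $F$ relative to $E$, and the self-centralizing property ${\bf C}_G(F)\le F$ to get faithfulness. The identification $E\cap L={\bf F}(L)$ via $L\cong G/F$ and the faithfulness argument at the end are correct (and in fact more detailed than the paper's ``it is not difficult to see'').

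There is, however, one step that does not work as written: the appeal to Fitting's lemma to obtain $F=[F,E]\times{\bf C}_F(E)$. Fitting's lemma requires a coprime action, and you explicitly leave open whether $|E/F|$ is coprime to $|F|$; nothing in Hypothesis 3 guarantees this, and without coprimality the conclusion fails in general (for instance, a cyclic $p$-group acting by a single Jordan block on an elementary abelian $p$-group of rank $2$ has $[V,A]={\bf C}_V(A)$, which is not a complement). Relatedly, your assertion that $F$ is a completely reducible $G$-module ``since $\Phi(F)=1$'' is misattributed: $\Phi(F)=1$ only makes $F$ a product of elementary abelian groups, whereas complete reducibility under the action of $G$ comes from $\Phi(G)=1$ via the second half of Gasch\"utz's theorem (Satz III.4.5 of Huppert), which is exactly what the paper quotes. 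Once that is in place the decomposition needs no coprimality at all: $[F,E]$ is a $G$-submodule of $F$ (because $E\lhd G$), so complete reducibility gives a $G$-invariant complement $Z$, and then $[Z,E]\le Z\cap[F,E]=1$ shows that $E$ centralizes $Z$. (One can check afterwards that $Z={\bf C}_F(E)$, but the lemma does not require this.) So the gap is repairable with a tool you already had on the table, but the Fitting's lemma step as stated would fail.
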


\begin{proof}
Note that Gash\"utz's theorem (see Hilfsatz III.4.4 of \cite{[7]})
implies the existence of a complement $L$ for $F$ in $G$.  It is not
difficult to see that $L$ acts faithfully on $F$, and so, $E \cap L=
F(L)$ acts faithfully on $[F, E]$.  This implies that $L$ must act
faithfully on $[F, E]$.  Since $\Phi(G)=1$, Gash\"utz's theorem
(Satz III.4.5 of \cite{[7]}) also tells us that $F$ is completely
reducible under the action of $L$.  In particular, $F = [F,E] \times
Z$ where $Z$ is normal in $G$.  Observe that $[Z,E] \le Z \cap [F,E]
= 1$, so $E$ centralizes $Z$.
\end{proof}

We study the case when $\Phi (G) = 1$ much more closely.  Hence, we make the following hypothesis.

\medskip
{\bf Hypothesis 4.} Assume Hypothesis 3, and suppose $\Phi(G) = 1$.
Let $L$ be a complement for $F$ in $G$.
\medskip

The following lemma turns out to be key to our work.

\begin{lemma} \label{stabs}
Assume Hypothesis 4.  Then no prime divides $|L:I_L (\lambda)|$ for
all nonprincipal characters $\lambda \in \Irr ([F,E])$.
\end{lemma}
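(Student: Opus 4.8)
The plan is to argue by contradiction: suppose some prime $t \in \rho(G)$ divides $|L : I_L(\lambda)|$ for every nonprincipal $\lambda \in \Irr([F,E])$. The idea is that this forces $t$ to divide every nontrivial degree coming from $\Irr(G \mid [F,E])$, which would make $t$ adjacent in $\Delta(G)$ to every prime appearing in such a degree, and then one has to derive a contradiction with the square structure of $\Gamma$ — in particular, with the fact (from Lemma~\ref{Lemma 1}) that no proper four-vertex subgraph of $\Gamma$ can occur, so $\Delta(G)$ cannot lose edges.

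First I would set up the Clifford-theoretic machinery. Since $\Phi(G) = 1$, Lemma~\ref{comp} gives $F = [F,E] \times Z$ with $E$ centralizing $Z$ and ${\bf F}(L) = E \cap L$ acting faithfully on $[F,E]$; write $V = [F,E]$. For a nonprincipal $\lambda \in \Irr(V)$, every $\chi \in \Irr(G \mid \lambda)$ has degree divisible by $|G : I_G(\lambda)| = |L : I_L(\lambda)|$, hence divisible by $t$. Since $h(G) = 3$ or $4$ and $E < G$, and since $V$ is a nontrivial normal subgroup on which ${\bf F}(L)$ acts faithfully (so $V$ is genuinely "seen" by the action), the set $\cd{G \mid V} = \bigcup_{\lambda \ne 1_V} \cd{G \mid \lambda}$ is nonempty and every element of it is divisible by $t$. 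Next I would locate a prime $u \ne t$ that also appears on some degree in $\cd{G \mid V}$: here is where I would use that $\Delta(G) = \Gamma$ has no isolated vertices, so every prime is on an edge, combined with an analysis of where the degrees of $\Delta(G)$ can come from — degrees involving the two primes in $\pi(G:E)$ or the structure of $E/F$ acting on $V$. The goal is to show $t$ is forced to be adjacent to \emph{three} other primes, or else that two primes become nonadjacent, either of which contradicts the square.

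The cleanest route is probably: show that the hypothesis makes $t$ adjacent to \emph{both} of the primes it is not adjacent to in the square. Concretely, in $\Gamma = \{pr, ps, qr, qs\}$ each vertex has exactly two neighbours; say $t = p$, whose non-neighbours are $q$ and $s$. One uses the existence of a degree $p^a s^b$ and a degree involving $q$ to produce nonprincipal $\lambda, \mu \in \Irr(V)$ (via restricting to $V$ the relevant characters, or via Theorem 13.28 / Problem 6.12 of \cite{[4]} to push irreducible constituents of $F$ down to $V$) whose stabilizer indices are divisible by $p$ by hypothesis, forcing the offending degrees to be divisible by $p$ as well — so $pq$ or $ps$ divides a degree, contradicting $\Delta(G) = \Gamma$. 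If instead no degree in $\cd G$ involves $V$ at all in the relevant way, one shows $V$ is central, contradicting faithfulness of ${\bf F}(L)$ on $V = [F,E] \ne 1$.

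The main obstacle I expect is organizing which primes can appear on $\cd{G \mid V}$ versus $\cd{G/V}$, i.e.\ ruling out the possibility that $t$ together with the primes it is \emph{already} adjacent to absorbs all of $\cd{G \mid V}$ while the missing edges are realized only inside $\cd{G/V}$. Handling this requires knowing that $V$, being the faithful module for ${\bf F}(L) = E \cap L$, is large enough that its characters feed into degrees carrying the two primes of $\pi(G:E)$; this should follow from Lemma~\ref{Lemma 3.3}(2)--(3) (when $h(G) = 3$) and a similar Fitting-height analysis when $h(G) = 4$, together with the observation that $G/V$ has strictly smaller Fitting height, so its degree graph is a proper subgraph of $\Gamma$ and by Lemma~\ref{Lemma 1} cannot itself be $\Gamma$ — hence $\cd{G \mid V}$ must supply at least one edge of $\Gamma$, and then the divisibility-by-$t$ conclusion collides with the square.
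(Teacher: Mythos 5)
Your opening setup (Clifford theory for $\lambda\in\Irr([F,E])$, every degree in $\cd{G\mid\lambda}$ divisible by $|L:I_L(\lambda)|$) matches the paper's first step, but the argument you build on it has two genuine problems. First, you have misread the graph: in $\Gamma$ with edge set $\{pr,ps,qr,qs\}$ the vertex $p$ is adjacent to \emph{both} $r$ and $s$ and has exactly one non-neighbour, namely $q$. So a degree divisible by $ps$ is not ``offending,'' and your plan to make $t$ adjacent to ``both of the primes it is not adjacent to'' has only one possible target. Second, and more seriously, the proposed route to a contradiction does not close. The hypothesis that $p$ divides $|L:I_L(\lambda)|$ for every nonprincipal $\lambda\in\Irr([F,E])$ only says that every degree in $\cd{G\mid [F,E]}$ is a $\{p,r\}$- or $\{p,s\}$-number; this is perfectly consistent with the square, with all $q$-divisible degrees arising from $\cd{G/[F,E]}$, and nothing in your sketch rules that out. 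Your appeal to Lemma~\ref{Lemma 1} is also backwards: that lemma says a \emph{proper} four-vertex subgraph of $\Gamma$ cannot be a degree graph, so it does not prevent $\Delta(G/[F,E])$ from being all of $\Gamma$; it gives no reason that $\cd{G\mid[F,E]}$ must supply a ``missing'' edge.

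What the paper actually does, after the common first step (since $q$ is the unique non-neighbour of $p$, each $I_L(\lambda)$ contains a Sylow $q$-subgroup of $L$, $\lambda$ extends to $FI_L(\lambda)$, and Gallagher plus It\^o force that Sylow $q$-subgroup to be normal in $I_L(\lambda)$), is to invoke Lemma 1 of \cite{[1]}: this structural classification forces either $|[E,F]|=9$ with $L\cong{\rm SL}_2(3)$ or ${\rm GL}_2(3)$ (impossible, since $\pi(L)$ has four primes) or $L$ to embed in the semilinear group $\Gamma([E,F])$ with an abelian normal subgroup $O$ acting irreducibly, whence $h(L)\le 2$ and $h(G)=3$. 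The proof then enlarges $O$ so that $pr$ divides $|O|$, obtains a Frobenius action of $O$ on $[E,F]$ so that $pr$ divides every $|L:I_L(\lambda)|$, shows $Z={\bf Z}(G)$ so that every degree in $\cd{G\mid F}$ is a $\{p,r\}$-number, concludes $\Delta(L)=\Gamma$ with $h(L)=2$, splits $L$ by Corollary~\ref{Lemma 3.2}, and finally gets the contradiction that both $FL_1$ and $FL_2$ would be disconnected of Type 2, forcing $\{p,q\}=\{2,3\}=\{r,s\}$. None of this structural input appears in your proposal, and without something playing the role of Lemma 1 of \cite{[1]} the purely graph-theoretic contradiction you are aiming for is not there to be found.
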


\begin{proof}
Now, consider a nonprincipal character $\lambda \in \Irr([E, F])$.
Observe that the stabilizer of $\lambda$ in $G$ is $F I_{L}
(\lambda)$.  Furthermore, we know that $I_{E \cap L}(\lambda) < E
\cap L $. It follows that some prime divisor of $|E:F|$ divides
$|L:I_{L}(\lambda)|$.

Suppose there exists a fixed prime $t \in \pi (|E:F|)$, say $t = p$,
such that for every nonprincipal character $\lambda \in \Irr([E,
F])$, we have $p \mid |L:I_{L} (\lambda)|$.  We note that $q$ is not
adjacent to $p$ in $\Delta(G)$, and every degree in $\cd {G \mid
\lambda}$ is divisible by $|L:I_{L} (\lambda)|$.  We see that
$I_{L}(\lambda)$ contains a full Sylow $q$-subgroup of $L$.
Furthermore, by Corollary 2 of \cite{[2]}, we know that $\lambda$
extends to $FI_{L} (\lambda)$.  We see that $q$ divides no degree in
$\cd {FI_{L} (\lambda) \mid \lambda}$.  By Gallagher's theorem, $q$
divides no degree in $\Irr (FI_L (\lambda)/F) = \Irr (I_{L}
(\lambda))$, and using It\^o's theorem, $I_{L} (\lambda)$ has a
normal Sylow $q$-subgroup. This is the situation of Lemma 1 of
\cite{[1]}.  From that lemma, either (1) $|[E, F]|=9$ and $L$ is
isomorphic to either ${\rm SL}_{2}(3)$ or ${\rm GL}_{2}(3)$, or (2) There is a
normal subgroup $O \leq L$ so that $O$ is abelian, $[E, F]$ is
irreducible under the action of $O$, and $L \le \Gamma ([E,F])$, in
particular, we have $h(L)\leq 2$ and $h(G)\leq 3$.

Since $L \cong G/F$, we have $\pi(L) = \pi(G/F) = \{p, q, r, s\}$,
so that (1) cannot occur.  Thus, (2) happens.  At this time, if
$h(G) = 4$, then we get a contradiction, so we assume that $h(G) =
3$.  In particular, we have $O \leq {\bf F}(L)$.

By Lemma \ref{Lemma 3.3}, ${\bf F}(L) \cong E/F$ is the Hall $\{p,
r\}$-subgroup of $G/F$.  Replacing $O$ by $O {\bf Z}({\bf F}(L))$,
we may assume that $|O|$ is divisible by $p$ and $r$.  (Note that
$O{\bf Z}({\bf F}(L))$ is abelian and $[E,F]$ is irreducible under
the action of $O{\bf Z}({\bf F}(L))$.) By Theorem 2.1 of \cite{[6]},
$O \leq \Gamma_{0}([E,\ F])$, and $O$ acts Frobeniusly on $[E, F]$.
Therefore, $[E, F]$ is a $\{ p, r \}^{'}$-group.  Also, $p$ and $r$
will divide $|L:I_L (\lambda)|$ for every nonprincipal character
$\lambda \in \Irr ([F,E])$. Applying Gallagher's theorem, this
implies that $p$ and $r$ are the only primes dividing degrees in
$\cd {I_L (\lambda)}$. By It\^o's theorem, $I_L (\lambda)$ has a
normal (unique) $\{ p, r \}$-complement. By Clifford's theorem, $p$
and $r$ are the only primes dividing $|G:I_G (\lambda)| = |L:I_L
(\lambda)|$.  Therefore, $I_L (\lambda)$ contains a unique Hall $\{
p, r \}$-complement of $L$.

Following Lemma \ref{comp}, we have $F = [F,E] \times Z$ where $E$
centralizes $Z$.  Now consider a nonprincipal character $\mu \in
\Irr (Z)$ and write $C$ for its stabilizer in $G$.  The stabilizer
of $\lambda \times \mu$ in $G$ is $C \cap T$, where $T$ is the
stabilizer of $\lambda$ in $G$.  Since $O$ acts Frobeniusly on
$[E,F]$, we have $pr \mid |G:T| \mid |G:C \cap T|$.  Since $|G:C
\cap T|$ divides degrees in $\cd G$, it is a $\{p, r\}$-number.  In
particular, $C$ contains the unique Hall $\{p, r\}$-complement found
in $T$.  Also $C$ contains $E$.  Since $|G:E|$ is not divisible by
$p$ or $r$, we conclude that $G = C$.  We conclude that every
character of $Z$ is $G$-invariant, and therefore, $Z = {\bf Z}(G)$.
Also, every degree in $\cd {G \mid F}$ is a $\{ p, r \}$-number.

From the structure of $\Delta (G)$, we know that $\cd G$ contains
$\{ p, s\}$-numbers and $\{ r, q \}$-numbers.  By the previous
paragraph, these degrees must lie in $\cd {G/F} = \cd L$.  It
follows that $\rho (L) = \rho (G)$ and so, $\Delta (L) = \Delta
(G)$.  We note that $h(L) = 2$, and $\Delta (L) = \Gamma$, it
follows from Corollary \ref{Lemma 3.2} that $L = PQ \times
RS$, where $P, Q, R, S$ are Sylow $p, q, r,
s$-subgroups of $L$, respectively. Now, let $L_{1} = PQ$,
$L_{2} = RS$, and $G_{i} = FL_{i}$, where $i = 1, 2$. Notice
that $L_1$ and $L_2$ are both disconnected of Type 1, so $P$ and
$R$ are both nonabelian.   It is clear that $G_{i}$ is normal in
$G$, $G_{1}$ contains the Hall $\{p, q\}$-subgroups of $G$, and
$G_{2}$ contains the Hall $\{r, s\}$-subgroups of $G$. And $F = {\bf
F}(G_{i})$ is abelian for $i=1, 2$.  And so $\Delta(G_{1})$ is
disconnected with two connected components $\{p\}$ and $\{q\}$;
$\Delta(G_{2})$ is disconnected with two connected components
$\{r\}$ and $\{s\}$.  Observe that $h(G_{i}) = 3$, where ${\bf
F}(G_{i})$ is abelian, and both $P$ and $R$ are nonabelian.
We deduce that $G_{i}$ is disconnected of Type 2 for both $i = 1$
and $2$, which is a contradiction since the primes in Type 2 are
$\{2 , 3\}$.
\end{proof}

We are able to apply the previous lemma to the more general setting of Hypothesis 3.

\begin{corollary} \label{Fitting}
Assume Hypothesis 3.  Then $|E:F|$ has exactly two prime divisors.
\end{corollary}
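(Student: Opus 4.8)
The plan is to pin down $\pi(E/F)$ by trapping its size between $1$ and $2$, and then to exclude the value $1$ by feeding the hypothesis back into Lemma~\ref{stabs}. For the upper bound I would argue exactly as in the proof of Lemma~\ref{Lemma 3.3}: by the discussion before Theorem~19.6 of \cite{[6]}, $\cd G$ contains a degree divisible by every prime dividing $|E:F|$; but the prime divisors of any single degree span a complete subgraph of $\Delta(G)=\Gamma$, and $\Gamma$ (a $4$-cycle) contains no triangle, so such a degree has at most two prime divisors, hence so does $|E:F|$. For the (trivial) lower bound, Hypothesis~3 forbids $G$ from having a normal Sylow subgroup, so $G$ is not nilpotent, $G/F\neq 1$, and therefore $E/F={\bf F}(G/F)\neq 1$.

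It remains to rule out the possibility that $|E:F|$ has exactly one prime divisor, say $t$. First I would reduce to Hypothesis~4. Passing to $\bar G=G/\Phi(G)$ leaves $\Delta(G)=\Gamma$ unchanged (Lemma~\ref{Lemma 3.1}); it preserves the ``no normal Sylow subgroup'' condition by the Frattini argument used in the proof of Lemma~\ref{Lemma 3.1}; and since $\Phi(G)\le F={\bf F}(G)$ we get ${\bf F}(\bar G)=F/\Phi(G)$ and hence ${\bf F}\!\left(\bar G/{\bf F}(\bar G)\right)\cong E/F$, so the set of primes dividing the relevant index is unchanged. Thus I may assume $\Phi(G)=1$, choose a complement $L$ for $F$ as in Hypothesis~4, and recall from Lemma~\ref{comp} that $F=[F,E]\times Z$ with $E\cap L={\bf F}(L)$ acting faithfully on $[F,E]$. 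Since $E>F$ while $E$ does not centralize $F$ (otherwise $E\le{\bf C}_G(F)\le F$, forcing $E=F$), we have $[F,E]\neq 1$, so $\Irr([F,E])$ has nonprincipal characters.

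Now the first paragraph of the proof of Lemma~\ref{stabs} shows that for \emph{every} nonprincipal $\lambda\in\Irr([F,E])$ some prime divisor of $|E:F|$ divides $|L:I_L(\lambda)|$; as $t$ is the only prime divisor of $|E:F|$, this forces $t\mid|L:I_L(\lambda)|$ for all such $\lambda$, which is exactly the situation Lemma~\ref{stabs} rules out --- a contradiction. Hence $|E:F|$ has exactly two prime divisors. I expect the only points needing care are the reduction to $\Phi(G)=1$ (checking that Hypothesis~3 and the isomorphism type of $E/F$ survive) and confirming that the quoted paragraph of Lemma~\ref{stabs} really applies to every nonprincipal $\lambda$; both are routine. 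One could instead split on the Fitting height --- when $h(G)=3$ the claim is immediate from Lemma~\ref{Lemma 3.3}, leaving only $h(G)=4$ for the argument above --- but the Frattini reduction makes that case distinction unnecessary.
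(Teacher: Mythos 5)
Your proposal is correct and follows essentially the same route as the paper: reduce to $\Phi(G)=1$, use the discussion before Theorem 19.6 of \cite{[6]} together with the triangle-free structure of $\Gamma$ for the upper bound of two primes, and invoke Lemma \ref{comp} plus Lemma \ref{stabs} to rule out $|E:F|$ being a prime power. Your extra care in checking that Hypothesis 3 and $E/F$ survive the passage to $G/\Phi(G)$, and that $[F,E]\neq 1$, only makes explicit what the paper leaves as ``without loss of generality.''
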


\begin{proof}
Without loss of generality, we may assume that $\Phi (G) = 1$, and
we can take $L$ to be a complement for $F$ in $G$.  By Lemma
\ref{comp}, we know that $E \cap L = {\bf F} (L)$ acts faithfully on
$[E,F]$.  If $E/F$ is a $t$-group for some prime $t$, then $t$ will
divide $|L:I_L (\lambda)|$ for every nonprincipal character
$\lambda$ which violates Lemma \ref{stabs}.  Thus, $|E:F|$ is
divisible by at least two primes.  On the other hand, using the
discussion prior to the Theorem 19.6 of \cite{[6]}, we know that
some character degree in $\cd G$ is divisible by the prime divisors
of $|E/F|$. Thus, $|E/F|$ has at most two distinct prime divisors.
\end{proof}

We return to Hypothesis 4, and we consider the Sylow subgroups of $L$.  Notice that the symmetry in the hypotheses allows us to exchange both $p$ and $r$ and $q$ and $s$ in the conclusion.

\begin{lemma} \label{sylows}
Assume Hypothesis 4, and $\pi (E:F) = \{ p, r \}$.  Then there
exists $\lambda \in \Irr ([E,F])$ such that $p$ divides $|L:I_{L}
(\lambda)|$ and $r$ does not divide $|L:I_{L} (\lambda)|$.  In
particular, $[{\bf O}_{r} (L), Q] = 1$, where $Q$ is a Sylow
$q$-subgroup of $L$.
\end{lemma}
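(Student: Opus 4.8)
The plan is to produce the character $\lambda$ from Lemma~\ref{stabs} together with a stabilizer count inside $E\cap L$, and then to read off the statement about ${\bf O}_r(L)$ by a Clifford-theoretic argument driven by the non-edges of $\Gamma$.

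For the first assertion, I would argue as follows. By Corollary~\ref{Fitting} we have $\pi(E:F)=\{p,r\}$, and since $E\cap L\cong E/F$ it follows that $E\cap L={\bf F}(L)$ is a $\{p,r\}$-group. Apply Lemma~\ref{stabs} to the prime $r$: as $r$ cannot divide $|L:I_L(\mu)|$ for every nonprincipal $\mu\in\Irr([E,F])$, we may fix a nonprincipal $\lambda\in\Irr([E,F])$ with $r\nmid|L:I_L(\lambda)|$. This $\lambda$ is the one we want. Indeed, exactly as in the proof of Lemma~\ref{stabs}, the stabilizer of $\lambda$ in $G$ is $FI_L(\lambda)$ and $I_{E\cap L}(\lambda)<E\cap L$, so $|E\cap L:I_{E\cap L}(\lambda)|>1$; this index is a $\{p,r\}$-number dividing $|L:I_L(\lambda)|$, and since $r\nmid|L:I_L(\lambda)|$ we must have $p\mid|L:I_L(\lambda)|$. (By the symmetry recorded before the statement one could equally produce a $\lambda$ with $r\mid|L:I_L(\lambda)|$ and $p\nmid|L:I_L(\lambda)|$.)

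For the ``in particular'', write $I=I_L(\lambda)$. Every degree in $\cd{G\mid\lambda}$ is divisible by $|G:I_G(\lambda)|=|L:I|$, hence by $p$; because $p$ and $q$ are not adjacent in $\Gamma$, no degree in $\cd{G\mid\lambda}$ is divisible by $q$, and in particular $q\nmid|L:I|$, so $I$ contains a full Sylow $q$-subgroup of $L$. Now, arguing as in the proof of Lemma~\ref{stabs}, $\lambda$ extends to $FI=I_G(\lambda)$ by Corollary~2 of \cite{[2]}, so Gallagher's theorem shows that $q$ divides no degree in $\cd I$, and It\^o's theorem gives that $I$ has a normal Sylow $q$-subgroup $Q$; since $q\nmid|L:I|$, this $Q$ is in fact a Sylow $q$-subgroup of $L$. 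On the other hand $r\nmid|L:I|$, so $I$ contains a full Sylow $r$-subgroup of $L$, hence ${\bf O}_r(L)\le I$; and ${\bf O}_r(L)\lhd L$ gives ${\bf O}_r(L)\lhd I$. Thus ${\bf O}_r(L)$ and $Q$ are normal subgroups of $I$ of coprime order, so $[{\bf O}_r(L),Q]\le{\bf O}_r(L)\cap Q=1$.

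The only step that genuinely needs justification is the inequality $I_{E\cap L}(\lambda)<E\cap L$ for every nonprincipal $\lambda\in\Irr([E,F])$, i.e.\ that ${\bf F}(L)$ fixes no nonprincipal character of $[E,F]$; this is the observation used (essentially without comment) in the proof of Lemma~\ref{stabs}. It holds because $\Phi(G)=1$ forces $F$, and hence its $L$-submodule $[E,F]$, to be completely reducible, so that applying Clifford's theorem to the restriction to ${\bf F}(L)\lhd L$ together with the faithfulness of the action of $E\cap L={\bf F}(L)$ on $[E,F]$ (Lemma~\ref{comp}) yields ${\bf C}_{[E,F]}({\bf F}(L))=1$, and dualizing over the semisimple module $[E,F]$ shows that no nonprincipal character is ${\bf F}(L)$-invariant. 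Granting this, the rest is routine: the structural inputs are only the two non-edges $\{p,q\}$ and $\{r,s\}$ of $\Gamma$ and the theorems of Clifford, Gallagher and It\^o, so I anticipate no further obstacle.
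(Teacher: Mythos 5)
Your proof is correct and follows essentially the same route as the paper: Lemma~\ref{stabs} plus the fact that $E\cap L={\bf F}(L)$ is a $\{p,r\}$-group fixing no nonprincipal character of $[F,E]$ yields the desired $\lambda$, and then Clifford's theorem, the extension of $\lambda$ to $I_G(\lambda)$, Gallagher, and It\^o give $Q\lhd I_L(\lambda)$ and ${\bf O}_r(L)\le I_L(\lambda)$. The only differences are cosmetic: you supply the justification of $I_{E\cap L}(\lambda)<E\cap L$ that the paper leaves implicit, and you bypass the paper's case split on whether $s$ divides $|L:I_L(\lambda)|$ by observing directly that ${\bf O}_r(L)$ and $Q$ are coprime normal subgroups of $I_L(\lambda)$.
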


\begin{proof}
By Corollary \ref{Fitting}, $|E:F|$ is divisible by two primes, and we may assume that $|E:F|$ is divisible by $p$ and $r$.  Since ${\bf F}(L)$ acts faithfully on $F$, we know that either $p$ or $r$ divides $|L:I_L (\lambda)|$ for every nonprincipal character $\lambda \in \Irr ([F,E])$.  By Lemma \ref{stabs}, it follows that there exists characters $\lambda \in \Irr([E, F])$ so that $p$ divides $|L:I_{L} (\lambda)|$ and $r$ does not divide $|L:I_{L} (\lambda)|$. 

Since $\lambda$ extends to $I_G (\lambda) = F I_L (\lambda)$, we
know that $I_L (\lambda)$ contains a Sylow $q$-subgroup of $L$ say
$Q$ as a normal abelian subgroup.  Also, $I_L (\lambda)$ contains
a Sylow $r$-subgroup of $L$, say $R$. If $s$ divides $|L:I_{L}
(\lambda)|$, then $R$ is a normal abelian subgroup of $I_L
(\lambda)$. Otherwise, $s$ does not divide $|L:I_{L} (\lambda)|$,
and so, $I_{L} (\lambda)$ contains a Hall $\{r, s\}$-subgroup of
$L$.  In either case, ${\bf O}_r (L)$ is contained in $I_L
(\lambda)$, so we have $[{\bf O}_{r}(L), Q] = 1$.
\end{proof}

We also get information about a Hall $\{ q, s \}$-subgroup of $L$.

\begin{lemma} \label{Hall}
Assume Hypothesis 4.  If $\pi (E:F) = \{ p, r \}$.  Then $L$ has an abelian Hall $\{q , s \}$-subgroup.
\end{lemma}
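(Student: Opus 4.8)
The plan is to show that a Hall $\{q,s\}$-subgroup of $L$ is abelian by showing that $q$ and $s$ are not adjacent in $\Delta(L)$ and, more to the point, that no degree in $\cd{L}$ is divisible by $q$ or by $s$ individually in a way that would force a nonabelian Sylow subgroup; combined with It\^o's theorem this will give the conclusion. Concretely, I would argue as follows. By Lemma~\ref{sylows} (applied once as stated and once with the roles of $p,r$ exchanged, using the symmetry noted before Lemma~\ref{sylows}), there exists $\lambda\in\Irr([E,F])$ with $p\mid|L:I_L(\lambda)|$, $r\nmid|L:I_L(\lambda)|$, and there exists $\mu\in\Irr([E,F])$ with $r\mid|L:I_L(\mu)|$, $p\nmid|L:I_L(\mu)|$. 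For the first character, since $\lambda$ extends to $I_G(\lambda)=FI_L(\lambda)$ (Corollary~2 of \cite{[2]}) and $q$ is not adjacent to $p$ in $\Delta(G)$, every degree in $\cd{FI_L(\lambda)\mid\lambda}$ is a $\{p,r\}$-number with $q\nmid$ it; Gallagher's theorem then transfers this to $\cd{I_L(\lambda)}$, so $q$ divides no degree in $\cd{I_L(\lambda)}$, and It\^o gives a normal abelian Sylow $q$-subgroup $Q\le I_L(\lambda)$. Likewise, since $s$ is not adjacent to $p$, a full Sylow $s$-subgroup of $L$ lies in $I_L(\lambda)$ (after conjugating $\lambda$), and $s$ divides no degree in $\cd{I_L(\lambda)}$, so $I_L(\lambda)$ has a normal abelian Sylow $s$-subgroup as well. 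Thus $I_L(\lambda)$ contains a normal abelian Hall $\{q,s\}$-subgroup, which is in particular an abelian Hall $\{q,s\}$-subgroup of $L$ provided $I_L(\lambda)$ already contains such a full Hall subgroup.

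The point that needs care is that $I_L(\lambda)$, for a single $\lambda$, might not contain a \emph{full} Hall $\{q,s\}$-subgroup of $L$: it contains a full Sylow $q$-subgroup (as $q\nmid|L:I_L(\lambda)|$) but only that $p\mid$ and $r\nmid|L:I_L(\lambda)|$ was guaranteed, and $s$ might divide $|L:I_L(\lambda)|$. So I would instead combine the two characters. From $\lambda$ we learn that $L$ has an abelian normal Sylow $q$-subgroup $Q$ (It\^o applied in $L$ via the argument above, since $q$ divides no degree in $\cd{I_L(\lambda)}$ and $q$ divides no degree of the form $q^{\ast}s^{\ast}$ in $\cd G$ — actually the cleanest route: $q$ is not adjacent to $p$ and not adjacent to $q$ trivially, and $q$ IS adjacent only to $r$ and $s$; but the $\lambda$-argument shows $q\nmid\cd{I_L(\lambda)}$ while $I_L(\lambda)\supseteq Q\in\mathrm{Syl}_q(L)$, so $Q$ is abelian). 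Symmetrically, from $\mu$ we get that $L$ has an abelian Sylow $s$-subgroup $S$. It remains to see that $Q$ and $S$ can be chosen to normalize one another and to centralize one another; for the normalizing, note $Q=\mathbf{O}_q(L)$ is normal in $L$ so it is normalized by every $S$, and for centralizing, if $[Q,S]\ne1$ then some character in $\Irr(L)\subseteq\cd{G/F}$ would have degree divisible by $qs$, contradicting the edge set of $\Gamma$ (which has no edge $qs$). Hence $QS=Q\times S$ is an abelian Hall $\{q,s\}$-subgroup of $L$.

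Let me streamline the middle step, since asserting $\mathbf{O}_q(L)\in\mathrm{Syl}_q(L)$ requires an argument: from the $\lambda$-analysis, $q$ divides no degree in $\cd{I_L(\lambda)}$ and $I_L(\lambda)$ contains a full Sylow $q$-subgroup of $L$, so that Sylow $q$-subgroup is normal and abelian in $I_L(\lambda)$; but I want it normal in $L$. Instead, the correct way is: since $q$ is not adjacent to $p$ in $\Delta(G)$ and $[E,F]$ carries a character $\lambda$ whose induced degrees to $G$ are $p$-divisible (as $p\mid|L:I_L(\lambda)|$), one gets from Gallagher that $q$ divides no degree in $\cd{I_L(\lambda)}$, hence by It\^o a normal abelian Sylow $q$-subgroup of $I_L(\lambda)$; then because $|L:I_L(\lambda)|$ is a $p$-power times possibly an $s$-power (it is a $\{p,s\}$-number since it divides degrees $\lambda(1)|L:I_L(\lambda)|$ which, being $q,r$-free, must be $\{p,s\}$-numbers), $I_L(\lambda)$ contains $\mathbf{O}^{\{p,s\}'}$... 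This is getting delicate. The cleanest formulation, which I would adopt in the write-up: show directly that neither $q$ nor $s$ is adjacent to any prime in $\Delta(L)$ \emph{that forces nonabelianness}, i.e. show $q,s$ are not adjacent to each other in $\Delta(L)$ and that $L$ has abelian Sylow $q$- and Sylow $s$-subgroups by the two extension/Gallagher/It\^o arguments above, then invoke that $\mathbf{O}_q(L)$ is normal and the $\{q,s\}$ non-edge to conclude $QS$ is abelian. The main obstacle is exactly this bookkeeping about which primes can divide $|L:I_L(\lambda)|$ and ensuring one genuinely recovers full Sylow $q$- and $s$-subgroups inside the relevant stabilizers; once that is pinned down, everything else is a routine application of It\^o, Gallagher, and the shape of $\Gamma$.
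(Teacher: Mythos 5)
There is a genuine gap, and it sits exactly where you sense it: the passage from ``$L$ has an abelian Sylow $q$-subgroup and an abelian Sylow $s$-subgroup'' to ``$L$ has an abelian Hall $\{q,s\}$-subgroup.'' Your two characters $\lambda$ and $\mu$ each control only one of the two primes: $I_L(\lambda)$ contains a full Sylow $q$-subgroup of $L$ because $p$ divides $|L:I_L(\lambda)|$ and $pq$ is a non-edge, but nothing prevents $s$ from dividing $|L:I_L(\lambda)|$, and symmetrically for $\mu$. Neither of your proposed repairs closes this. The Sylow $q$-subgroup you produce is normal and abelian only in $I_L(\lambda)$, not in $L$, so you cannot conclude ${\bf O}_q(L)\in {\rm Syl}_q(L)$. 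And the claim that $[Q,S]\neq 1$ would force a degree divisible by $qs$ is not valid: non-adjacency of two primes in a character degree graph does not make Sylow subgroups for those primes commute --- the disconnected groups of Type 1 described in Section 2 are precisely groups with $\rho=\{q,s\}$, no edge between $q$ and $s$, and a Sylow $s$-subgroup acting nontrivially on a nonabelian normal Sylow $q$-subgroup.

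The missing idea is to produce a \emph{single} character of $F$ whose stabilizer index is divisible by $p$ and $r$ simultaneously. The paper sets $Y/F={\bf Z}(E/F)$; this is an abelian $\{p,r\}$-group with ${\bf F}(Y)=F$, and both $p$ and $r$ divide $|Y:F|$ since the center of a nilpotent group involves all of its prime divisors. Theorem 18.1 of \cite{[6]} then yields $\xi\in\Irr(F)$ with $\xi^{Y}$ irreducible, so $pr$ divides $|Y:I_Y(\xi)|$ and hence $|L:I_L(\xi)|$. Consequently every degree in $\cd {G\mid \xi}$ is divisible by $pr$, and the shape of $\Gamma$ (no edges $pq$ or $rs$) forces every such degree to be a $\{p,r\}$-number. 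Therefore $I_L(\xi)$ contains a full Hall $\{q,s\}$-subgroup of $L$, $\xi$ extends to $FI_L(\xi)$, Gallagher's theorem shows that neither $q$ nor $s$ divides any degree of $I_L(\xi)$, and It\^o's theorem makes that Hall subgroup normal and abelian in $I_L(\xi)$, hence an abelian Hall $\{q,s\}$-subgroup of $L$. Your Sylow-by-Sylow observations are correct but cannot be glued together without this single-character step (or some substitute for it).
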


\begin{proof}
Let $Y/F = {\bf Z}(E/F)$. Since ${\bf F}(Y) = F$ and $Y/F$ is
abelian, we use Theorem 18.1 of \cite{[6]} to see that there exists
a character $\xi \in \Irr (F)$ such that $\xi^{Y} \in \Irr (Y)$.
Write $\theta = \xi^{Y}$, then $\pi (\theta(1)) = \{p, r\}$.  It is
not hard to see that $pr$ divides $|L:I_{L} (\xi)|$ and so, $pr$
divides every degree in $\cd {G \mid \xi}$.  By the structure of
$\Delta (G)$, $I_{L} (\xi)$ contains a Hall $\{q, s \}$-subgroup of
$L$. Furthermore, $\xi$ extends to $F I_{L} (\xi)$.  We see that
$qs$ divides no degree in $\cd {F I_{L}(\xi) \mid \xi}$.  By
Gallagher's theorem, $qs$ divides no degree in $\cd {F I_L (\xi)/F}
= \cd {I_L (\xi)}$.  Applying It\^o's theorem, $I_{L} (\xi)$
contains an abelian, normal Hall $\{ q, s \}$-subgroup of $L$, which
implies $L$ has an abelian Hall $\{ q, s \}$-subgroup.
\end{proof}

Finally, we come to the main result about groups that satisfy Hypothesis 4.  We will apply this result to obtain the conclusion of the Main Theorem under Hypothesis 3.

\begin{theorem} \label{punchline}
Assume Hypothesis 4, and  $\pi (E:F) = \{ p, r \}$.  Let $L_{1}$ be
a Hall $\{p, q\}$-subgroup of $L$ and $L_{2}$ a Hall $\{r,
s\}$-subgroup of $L$, and write $G_{i} = F L_{i}$ for $i=1, 2$. Then
$L = L_{1} \times L_{2}$ and one of $G_{1}$ or $G_2$ is disconnected
of Type 4 and the other is disconnected of either Type 2, 3 or 4
with $\rho (G_{1}) = \{ p, q \}$ and $\rho (G_{2}) = \{ r, s \}$.
\end{theorem}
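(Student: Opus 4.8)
The plan is to work entirely inside $L$ and its action on $[E,F]$, using the three auxiliary lemmas (\ref{stabs}, \ref{sylows}, \ref{Hall}) to pin down the structure. First I would invoke Lemma \ref{sylows} (and its mirror image, obtained by swapping $p\leftrightarrow r$ and $q\leftrightarrow s$): there is a character $\lambda_1\in\Irr([E,F])$ with $p\mid|L:I_L(\lambda_1)|$ but $r\nmid|L:I_L(\lambda_1)|$, and a character $\lambda_2$ with the roles of $p$ and $r$ reversed. From the first, $[{\bf O}_r(L),Q]=1$; from the mirror, $[{\bf O}_p(L),S]=1$, where $Q,S$ are Sylow $q$- and $s$-subgroups of $L$. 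Combined with Lemma \ref{Hall} (and its mirror, giving that $L$ also has an abelian Hall $\{p,r\}$-subgroup would be false — rather, Lemma \ref{Hall} itself gives an abelian Hall $\{q,s\}$-subgroup $QS$ with $[Q,S]=1$), I get that $QS=Q\times S$ is abelian and normalized appropriately. The key commuting relations to extract are $[Q,S]=1$, $[{\bf O}_r(L),Q]=1$, $[{\bf O}_p(L),S]=1$. Since $E\cap L={\bf F}(L)$ is the Hall $\{p,r\}$-subgroup of $L$ (Lemma \ref{Lemma 3.3}, using $h(G)=3$, which follows because Lemma \ref{stabs} together with Lemma 1 of \cite{[1]} forces $h(G)\le 3$ once we are in Hypothesis 4 with four primes), ${\bf F}(L)$ has a normal Sylow $p$-subgroup $P_0={\bf O}_p(L)$ and normal Sylow $r$-subgroup $R_0={\bf O}_r(L)$, and ${\bf F}(L)=P_0\times R_0$.

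Next I would argue that $Q$ acts faithfully and irreducibly-ish on $P_0$ and $S$ on $R_0$. Here is the mechanism: $L/{\bf F}(L)$ embeds in $\Aut({\bf F}(L))=\Aut(P_0)\times\Aut(R_0)$, and since $[R_0,Q]=1$ and $[P_0,S]=1$, the image of $Q$ lies in $\Aut(P_0)$ and the image of $S$ lies in $\Aut(R_0)$. So $L_1=P_0Q$ and $L_2=R_0S$ are subgroups, each normal in $L$ (they are preimages of the relevant factors), and $L=L_1\times L_2$ because the factors centralize each other — $Q$ centralizes $R_0$ and $S$, $P_0$ centralizes $S$ and (being part of the nilpotent ${\bf F}(L)$) $R_0$. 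That establishes $L=L_1\times L_2$ with $\rho$-pieces $\{p,q\}$ and $\{r,s\}$, and hence $G_i=FL_i$ makes sense with $F$ central-up-to-$[F,E]$ split as in Lemma \ref{comp}.

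Then I would show each $G_i$ is disconnected. Since $G=G_1 G_2$ with $G_1\cap G_2=F$ and $L_1,L_2$ centralizing each other, the degree graph of $G_i$ is (a subgraph of) the induced subgraph of $\Delta(G)=\Gamma$ on $\{p,q\}$ resp. $\{r,s\}$, which is an edgeless pair — so if $\rho(G_i)$ has two primes, $\Delta(G_i)$ is disconnected with the two singleton components. One must check $\rho(G_i)$ really has both primes: $q\in\rho(G_1)$ because $\cd G$ has a $\{p,q\}$-free... no — because $\cd G$ contains a $\{p,q\}$-number coming from a character of $G_1\times$(something), and the projection argument of the direct-product-ish structure forces this degree into $\cd{G_1}$. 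With $\Delta(G_i)$ disconnected, I apply the classification of disconnected solvable groups. Each $G_i$ has Fitting subgroup $F$ (one checks ${\bf F}(G_i)=F$ using $h(G)=3$ and that $L_i$ acts faithfully), and $F=[F,E]\times Z$ with $Z$ central; so ${\bf F}(G_i)$ is abelian, ruling out Types 1, 5, 6. Moreover $h(G_i)=3$, and ${\bf F}(G_i/{\bf F}(G_i))\cong {\bf F}(L_i)$-image is abelian (it's a $p$- or $r$-group being acted on), so both Type 2 and Type 3 are possible only when the relevant prime pair is $\{2,3\}$; hence each $G_i$ is of Type 2, 3, or 4. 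Finally, I rule out that \emph{both} are of Type 2 or 3: if $G_1$ were Type 2 or 3 then $\{p,q\}=\{2,3\}$, and if $G_2$ were also then $\{r,s\}=\{2,3\}$, forcing $\{p,q\}=\{r,s\}$, contradicting that $p,q,r,s$ are distinct. So at least one $G_i$ is Type 4.

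The main obstacle I anticipate is the bookkeeping that shows $L=L_1\times L_2$ cleanly — in particular verifying that ${\bf F}(L)=P_0\times R_0$ and that the action of $L/{\bf F}(L)$ genuinely splits as a product of an action on $P_0$ and an action on $R_0$, rather than just each \emph{Sylow} of the top centralizing the "wrong" part of ${\bf F}(L)$. The commuting relations from Lemmas \ref{sylows} and \ref{Hall} give exactly $[{\bf O}_r(L),Q]=1$ and $[{\bf O}_p(L),S]=1$ and $[Q,S]=1$, and since $L/{\bf F}(L)=QS\cdot(\text{Hall }\{p,r\}\text{ part of }L/{\bf F}(L))$ — but $L/{\bf F}(L)$ is a $\{q,s\}$-group because ${\bf F}(L)$ is the Hall $\{p,r\}$-subgroup — we actually get $L/{\bf F}(L)\cong QS$ abelian, which makes the splitting immediate: $L$ is metabelian with ${\bf F}(L)=P_0\times R_0$ and $L/{\bf F}(L)=\bar Q\times\bar S$, and the cross-centralizing relations give $L=(P_0\bar Q)\times(R_0\bar S)$ directly. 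So in fact once the commuting relations are assembled the product decomposition is short; the delicate part is marshalling Lemmas \ref{sylows} and \ref{Hall} and their mirror images in the right order and confirming $h(G)=3$ so that Lemma \ref{Lemma 3.3} applies. I would also need to be a little careful, when deducing Type 2/3 forces $\{2,3\}$, to use the recorded fact that in Types 2 and 3 we have $\rho(G)=\{2,3\}$ and ${\bf F}(G/{\bf F}(G))$ nonabelian (quaternion), which is consistent with ${\bf F}(G_i/F)$ being the image of ${\bf F}(L_i)$.
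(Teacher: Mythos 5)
There is a genuine gap: your proof only treats the case $h(G)=3$, and the justification you give for reducing to that case is a misreading of Lemma \ref{stabs}. That lemma's \emph{conclusion} is only the statement about stabilizer indices; the appeal to Lemma 1 of \cite{[1]} and the resulting bound $h(L)\le 2$ occur \emph{inside its proof, under the hypothesis being contradicted}, so nothing about the Fitting height of $G$ survives to the outside. Under Hypothesis 4 one only knows $3\le h(G)\le 4$ (the upper bound from Theorem B of \cite{[1]}, the lower bound from Corollary \ref{Lemma 3.2}), and $h(G)=4$ genuinely occurs --- indeed the Type 3 alternative in the statement you are proving exists precisely because of it, since a disconnected group of Type 3 has $G/{\bf F}(G)\cong {\rm GL}_2(3)$ and Fitting height $4$. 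When $h(G)=4$ you cannot invoke Lemma \ref{Lemma 3.3}, so ${\bf F}(L)$ need not be a Hall $\{p,r\}$-subgroup of $L$ and $L/{\bf F}(L)$ need not be a $\{q,s\}$-group; your ``$L$ is metabelian with $L/{\bf F}(L)\cong \bar Q\times\bar S$'' collapse of the splitting argument is simply unavailable there. The paper spends the majority of its proof on this case: it first shows $L$ has a normal Sylow $t$-subgroup for some $t\in\{p,r\}$ (by passing to $L/\Phi(L)$, checking it satisfies Hypothesis 3 with Fitting height $3$, and applying the already-proved case), then rules out that Sylow subgroup being nonabelian, proves $[Q,R]=1$ by a separate stabilizer analysis, and only then assembles $L=L_1\times L_2$, with $G_2$ coming out of Type 3. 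None of this is in your proposal.

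For the portion you do cover ($h(G)=3$), your route is essentially the paper's: $[{\bf O}_r(L),Q]=[{\bf O}_p(L),S]=[Q,S]=1$ from Lemma \ref{sylows}, its mirror, and Lemma \ref{Hall}, giving $L=({\bf O}_p(L)Q)\times({\bf O}_r(L)S)$, then the classification of disconnected groups with $h(G_i)=3$ and ${\bf F}(G_i)=F$ abelian forcing Type 2 or 4, with disjointness of $\rho(G_1)$ and $\rho(G_2)$ excluding two copies of Type 2. Two smaller points to tighten even there: your argument that each $\rho(G_i)$ actually contains both of its primes is left in a garbled state (the clean reason is that $|G:G_i|$ is coprime to those primes, so Clifford theory pushes the relevant degrees of $G$ down into $\cd{G_i}$), and you should note explicitly that $F$ is abelian here because $\Phi(G)=1$ forces $F$ to be a direct product of minimal normal subgroups (Gasch\"utz), which is what rules out Types 1, 5, and 6.
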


\begin{proof}
Let $Q$ be a Sylow $q$-subgroup of $L$ and $S$ be a Sylow
$s$-subgroup of $L$. And (by conjugating if necessary) we may assume that $Q$ and $S$ lie in
some Hall $\{ q, s \}$-subgroup of $L$. By Lemma \ref{Hall}, this
subgroup is abelian so $[Q, S] = 1$. If $h(G) = 3$, then ${\bf
O}_p (L)$ and ${\bf O}_r (L)$ are the Sylow $p$- and Sylow
$r$-subgroups of $L$ by Lemma \ref{Lemma 3.3}. Hence, $L_1 = {\bf
O}_p (L) Q$ and $L_2 = {\bf O}_r (L) S$.  The result $L = L_1
\times L_2$ now follows by Lemma \ref{sylows}.

It is clear that $G_{i}$ is normal in $G$ for $i = 1,2$, that $G_{1}$ contains the Hall $\{ p, q \}$-subgroups of $G$, and that
$G_{2}$ contains the Hall $\{r, s \}$-subgroups of $G$.  It follows that
$\Delta(G_{1})$ is disconnected with two connected components $\{ p
\}$ and $\{ q \}$ and $\Delta(G_{2})$ is disconnected with two
connected components $\{ r \}$ and $\{ s \}$.  By the Main theorem
of \cite{[5]}, each $G_{i}$ is of Type 2 or 4 since $h (G_{i}) = 3$
and $F = {\bf F}(G_{i})$ is abelian, for $i = 1, 2$.  We note that
$\rho (G_{1}) \cap \rho (G_{2}) = \emptyset$, so $G_{1}$ and $G_{2}$
cannot both be Type 2.  This proves the conclusion when $h(G) = 3$.

Thus, we may assume that $h(G) = 4$.  We first prove that $L$ has a
normal Sylow $t$-subgroup for some prime $t$.  We note that $L$ has
a normal Sylow $t$-subgroup if and only if $L/\Phi(L)$ has a normal
Sylow $t$-subgroup.  Since ${\bf F}(L) \cong E/F$ is a $\{ p, r
\}$-group, if $L$ has a normal Sylow $t$-subgroup, then $t \in \{ p,
r \}$.

Suppose $L$ has no normal Sylow $t$-subgroup for any prime $t$. Then
$\pi (L/{\bf F}(L)) = \pi (L/\Phi (L)) = \pi (G/F) = \rho(G)$, and
so, $\rho (L/\Phi (L)) = \rho (G)$.  It follows that $\Delta (L/\Phi
(L)) = \Delta (G)$.  Hence, $L/\Phi (L)$ satisfies Hypothesis 3.
Now, $L/\Phi (L)$ satisfies the hypotheses of the theorem with
$h(L/\Phi (L)) = 3$, and we have already proved the conclusion in
this case.  In particular, $L/{\bf F} (L) = L_1 {\bf F} (L)/{\bf F}
(L) \times L_2 {\bf F} (L)/{\bf F}(L)$, and both $L_{1} {\bf F}
(L)/\Phi (L)$ and $L_2 {\bf F} (L)/\Phi (L)$ are disconnected groups
one of Type 4 and the other of Type 2 or 4. Without loss of
generality, $L_1 {\bf F}(L)/\Phi (L)$ is of Type 4. From the
structure of disconnected groups, we know that ${\bf F}(L_1 {\bf
F}(L)/\Phi (L)) = {\bf F}(L)/\Phi (L) = V \times Z$ where $V$ is a
minimal normal subgroup of $L_1 {\bf F}(L)/\Phi (L)$ that $Q$ does
not centralize and $Z$ is central in $L_1 {\bf F}(L)/\Phi (L)$.
Since ${\bf F} (L)$ is a $\{p, r \}$-group, $V$ is an elementary
abelian $u$-subgroup for some prime $u \in \{ p, r \}$. Observe that
$V \le {\bf O}_u (L/\Phi (L))$.

We cannot have $u = r$ since $Q$ acting nontrivially on $V$ would
contradict with $[{\bf O}_{r} (L), Q] = 1$ from Lemma
\ref{sylows}.  Since $L_1$ is a Hall $\{ p, q \}$-subgroup of $L$,
we have $L_1 \Phi (L)/ \Phi (L)$ is a Hall $\{ p, q \}$-subgroup of
$L/\Phi (L)$.  If $u = p$, then $V$ is a normal $p$-subgroup of
$L/\Phi (L)$, and thus, $V$ is contained in every Hall $\{p, q
\}$-subgroup of $L/\Phi (L)$, and in particular, $V \le L_1 \Phi
(L)/ \Phi (L)$.  Thus, $L_1 \Phi (L)/\Phi (L)$ is complemented by
the central subgroup $Z$ in $L_1 {\bf F}(L)/ \Phi (L)$.  This
implies that $L_1 \Phi (L)/\Phi (L)$ is normal in $L_1 {\bf F}
(L)/\Phi (L)$.  Since $L_1 \Phi (L)/\Phi (L)$ is also a Hall
subgroup, it is characteristic in $L_1 {\bf F}(L)/\Phi (L)$, and so,
$L_1 \Phi (L)$ is normal in $L$.  This implies that $L_1$ is normal
in $L$, and $L_1 F$ is normal in $G$.  Since $F = {\bf F}(L_1 F)$ is
abelian, we conclude that $\rho (L_1 F) = \pi (L_1) = \{ p, q \}$,
and $L_1 F$ is a disconnected group.  But $L_1 F$ has Fitting height
$4$ since $L_1$ has Fitting height $3$, and so, $L_1 F$ is
disconnected of Type 3, and this implies $\{ 2, 3 \} = \{ p, q \}$.

Since $\{ r, s \} \cap \{ p, q \}$ is empty, we see that $L_2 {\bf
F} (L)/\Phi (L)$ must be disconnected of Type 4.  Repeating the
argument of the previous two paragraphs with $L_2$ in place of $L_1$
and $\{ r, s \}$ in place of $\{ p, q \}$, we conclude that $L_2 F$
is disconnected of Type 3, and $\{ r, s \} = \{ 2, 3 \}$.  This also
contradicts $\{ p, q \} \cap \{ r, s \}$ is empty, so we conclude
that $L$ has a normal Sylow $t$-subgroup.

We now know that $L$ has a normal Sylow $t$-subgroup for some $t \in \{ p, r \}$, and without loss of generality, we take $t = p$, and we write $P$ for the normal Sylow $p$-subgroup of $L$.  Without loss of generality, we may assume $L_1 = PQ$.

Suppose $P$ is nonabelian.  By Lemma \ref{sylows}, we can find
$\lambda \in \Irr ([E,F])$ where $p$ divides $|L:I_L (\lambda)|$ and
$r$ does not divide $|L:I_L (\lambda)|$.  Conjugating $\lambda$ if
necessary, $Q$ is a normal subgroup of $I_L (\lambda)$, and $L$
has a Sylow $r$-subgroup $R$ which is contained in $I_L
(\lambda)$.  Thus, $R$ normalizes $Q$.  We have already seen
that $[Q, S] = 1$.  Since $P$, $Q$, $R$, and $S$ all normalize $L_1 = PQ$, it follows that $L$ normalizes $L_1$, and so, $L_1$ is normal in $L$.  Now, $\Delta
(L_1)$ is disconnected with components $\{ p \}$ and $\{ q \}$.
Since it has Fitting height $2$, the disconnected group $L_1$ is of Type 1.

Recall that $G_1 = FL_1$, so now, $G_{1}$ is normal in $G$ and
$\Delta(G_{1})$ is disconnected with two components $\{p \}$ and
$\{q \}$.  Since $G_1$ has Fitting height 3 and ${\bf F} (G_1/F)$ is
not abelian, $G_{1}$ is disconnected of Type 2, and so $G_{1}/F
\cong SL_{2} (3)$.  Now, $p=2$, $q=3$, $P \cong Q_{8},$ and
$Q \cong Z_{3}$.  If $[L_1, R] \neq 1$, then either
$[P, R] \neq 1$ or $[Q, R] \neq 1$.  We note that
$\Aut (P) \cong S_{4}$, so if $[P, R] \neq 1$, then $r
\in \{ 2, 3 \}$, a contradiction.  On the other hand, $\Aut (Q)
\cong S_3$, so if $[Q, R] \neq 1$, then $r = 2$, again a
contradiction.  We deduce that $[L_1, R]= 1$.

Applying Lemma \ref{sylows}, we can find $\eta \in \Irr ([E,F])$ so
that $r$ divides $|L:I_L (\eta)|$ and $p$ does not divide $|L:I_L
(\eta)|$.  Conjugating $\eta$ if necessary, we may assume $S$ is a normal
subgroup of $I_L (\eta)$ and $P$ is contained in $I_L (\eta)$.
Since $P$ is normal in $L$, this implies that $[P,S] = 1$.  As
we already have $[Q,S] = 1$, we obtain $[L_1, S] =
1$.  Now, it is clear that $L = L_1 \times L_2$.
Observe that $h (L) = 3$ and $h (L_1) = 2$, and hence, $h
(L_2) = 3$.

As $G_{2} = FL_2$, we have $G_2$ is normal in $G$,
so $F = {\bf F}(G_2)$.  We see that $h (G_{2}) = 4$ and $\Delta
(G_{2})$ is disconnected with components $\{ r \}$ and $\{ s \}$.
This implies that $G_2$ is disconnected of Type 3, and thus,
$\rho(G_{2}) = \{ r, s \} = \{ 2, 3 \}$.  This is a contradiction
since $\{p, q \} \cap \{r, s \} = \emptyset$.  This proves that
$P$ is abelian.

We next show that $[Q, R] = 1$.  Suppose that $[Q, R]
\neq 1$, and we obtain a contradiction. Observe that if $s$ divides
$|L:I_L (\lambda)|$, then we have $[Q, R] = 1$.  Hence, $s$ does
not divide $|L:I_L (\lambda)|$. Let $P_{1} = P \cap I_{L}
(\lambda)$, and observe that $P_{1} \lhd I_{L} (\lambda)$ since
$P \lhd L$.  We have $I_{L} (\lambda) = (P_{1} \times Q)
RS$.  Since $Q$ is normal in $I_L (\lambda)$, if $R$
were normal, then we would have $[Q,R] = 1$, a contradiction.
Thus, $R$ is not normal in $I_L (\lambda)$.

If $S$ is normal in $I_{L}(\lambda)$, then $Q$ and $R$ both
normalize $S$, and since $P$ is in $I_L (\eta)$, we know that
$P$ will normalize $S$, and thus, $S$ is normal in $L$.  This
is a contradiction since $s$ does not divide $|{\bf F}(L)|$.  We
conclude that $S$ is not normal in $I_L (\lambda)$.  Since $P$ and
$Q$ are abelian, we see that $\rho (I_L (\lambda)) = \{ r, s \}$.
We now use the structure of $\Delta(G)$ to get that $\Delta (I_{L}
(\lambda))$ has two connected components $\{ r \}$ and $\{ s \}$.
Recall that $[Q, S] = 1$.   We claim that this implies that
$Q \leq {\bf Z}(I_{L}(\lambda))$, a contradiction.  (Observe that
the Fitting subgroup of $I_L (\lambda)$ is the direct product of a
$t$-group $V$ times a central subgroup where $S$ acts
nontrivially on $V$ since $I_L (\lambda)$ has a disconnected graph.)
We conclude that $[Q, R] = 1$.

We have now shown that $R$ and $S$ both centralize $Q$.  Thus, $C_L (Q)$ contains a Hall $\{ r, s \}$-subgroup of $L$.  Without loss of generality, we may say that it is $L_2 =
R S$.  Note that $T = PL_2$ is a Hall $\{ p, r, s
\}$-subgroup of $L$.  Since $[Q, L_2]=1$ and $P$ is normal in $L$, we observe that $T$ is normal in $L$.  Thus, $\Delta (T)$ has two
connected components $\{r \}$ and $\{s \}$.  Observe that $[P,
S] = 1$.  As before, we can conclude that $P \leq {\bf
Z}(T)$ from the properties of the Fitting subgroup of groups with
disconnected graphs.  And so $T = P \times L_2$, and hence, $L = L_1 \times L_2$.

Since $P$ is normal and abelian, $p$ is not in $\rho (L)$.   Observe that $h(L) = h(L_2)
= 3$ since $h(L_1) = 2$ and so, $h(G_{2}) = 4$, so $G_{2}$ is disconnected of Type 3
and $G_{2}/F \cong {\rm GL}_{2}(3)$.
\end{proof}

We now apply Theorem \ref{punchline} to get the result under Hypothesis 3.  Our proof breaks up into two pieces depending on whether or not $F$ is abelian.  We first handle the case when $F$ is abelian.

\begin{theorem}\label{Corollary 3.6}
Assume Hypothesis 3, and $F$ is abelian.  Then $G = H_1 \times H_2
\times Y$ where $Y$ is central, $\rho (H_1) = \{ p, q \}$ and $\rho
(H_2) = \{ r, s \}$ with at least one of $H_{1}$ and $H_{2}$ of Type
4 and the other is of Type 2, 3, or 4.  
\end{theorem}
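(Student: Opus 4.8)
The plan is to reduce to the Frattini-free case, invoke Theorem~\ref{punchline}, refine the resulting ``central product over $F$'' into an honest direct product, and then push the decomposition back up through $\Phi(G)$.

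First I would pass to $\bar G = G/\Phi(G)$. By Lemma~\ref{Lemma 3.1} we have $\Delta(\bar G) = \Gamma$, and the Frattini argument used there shows $\bar G$ has no normal Sylow subgroup; since $\Phi(\bar G) = 1$ and ${\bf F}(\bar G) = F/\Phi(G)$ is abelian, $\bar G$ satisfies Hypothesis~4 with abelian Fitting subgroup. (Note $h(\bar G) = h(G)$, and $h(G) = 2$ is impossible under Hypothesis~3: by Corollary~\ref{Lemma 3.2} it would force $G$ to have a normal nonabelian Sylow subgroup.) By Corollary~\ref{Fitting}, $|E:F|$ has exactly two prime divisors, and by the symmetry $p\leftrightarrow r$, $q\leftrightarrow s$ we may take $\pi(\bar E:\bar F)=\{p,r\}$. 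Theorem~\ref{punchline} then gives a complement $\bar L$ of $\bar F$ in $\bar G$ with $\bar L = \bar L_1\times\bar L_2$, $\bar L_1$ a Hall $\{p,q\}$- and $\bar L_2$ a Hall $\{r,s\}$-subgroup, and $\bar G_i := \bar F\bar L_i$ disconnected of Type $2$, $3$, or $4$ with $\rho(\bar G_1)=\{p,q\}$, $\rho(\bar G_2)=\{r,s\}$, at least one of them Type~$4$.

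Next I would assemble a direct product for $\bar G$. Since $\bar G_i$ is disconnected of Type $2$, $3$, or $4$ with abelian Fitting subgroup, $\bar F = \bar W_i \times {\bf Z}(\bar G_i)$ where $\bar W_i = [\bar E_i,\bar F] = [\bar F,{\bf F}(\bar L_i)]$ is a $\bar G_i$-chief factor ($|\bar W_i|=9$ in Types $2,3$), and $\bar W_i$ is normal in $\bar G = \bar G_1\bar G_2$. The key point is that $\bar L_j$ centralizes $\bar W_i$ for $i\neq j$: otherwise some nonprincipal $\lambda\in\Irr(\bar W_i)$ is moved by $\bar L_j$, and chasing $\lambda$ up through $\Irr(\bar G_i\mid\lambda)$ (using that the $\bar G_i$-degrees over $\lambda$ already carry both primes of $\rho(\bar G_i)$) produces a degree of $\bar G$ divisible by a prime of $\rho(\bar G_1)$ and a prime of $\rho(\bar G_2)$, impossible since $\Gamma$ is a square. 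Granting this, $\bar W_1\cap\bar W_2$ is normal in $\bar G$ and, as $\bar F$ is abelian and the actions cross-centralize, is centralized by all of $\bar G$, hence lies in $\bar W_1\cap{\bf Z}(\bar G_1)=1$; likewise $\bar W_2 \le {\bf Z}(\bar G_1)$. Because $\bar F$ is completely reducible under $\bar L$ (Gasch\"utz), hence under $\bar L_2$ by Clifford's theorem, I refine ${\bf Z}(\bar G_1) = \bar W_2\times \bar Y$ with $\bar Y$ an $\bar L_2$-submodule; since $\bar Y\cap\bar W_2=1$ and $\bar Y\le\bar F=\bar W_2\times{\bf Z}(\bar G_2)$, projection embeds $\bar Y$ into ${\bf Z}(\bar G_2)$ as an $\bar L_2$-module, so $\bar L_2$ centralizes $\bar Y$; as $\bar L_1$ and $\bar F$ centralize $\bar Y$ too, $\bar Y\le{\bf Z}(\bar G)$ and $\bar F = \bar W_1\times\bar W_2\times\bar Y$. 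Setting $\bar H_i = \bar W_i\bar L_i$, the centralizing relations give $\bar G = \bar H_1\times\bar H_2\times\bar Y$; each $\bar H_i$ is $\bar G_i$ modulo a central subgroup, so it is disconnected of the same Type as $\bar G_i$ and $\rho(\bar H_i)$ is as asserted.

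Finally I would lift the decomposition to $G$. Let $H_1^{*}, H_2^{*}, Y^{*}$ be the full preimages in $G$ of $\bar H_1, \bar H_2, \bar Y$; they are normal in $G$ with $H_1^{*}H_2^{*}Y^{*}=G$ and pairwise intersections equal to $\Phi(G)\le F$. Since $F$ is abelian, $\Phi(G)$ is an abelian normal subgroup, and because the cross-centralizing relations and the shape of $\Gamma$ are inherited by $G$ (the diagonal edges of $\Gamma$ stay forbidden), $\Phi(G)$ splits $G$-invariantly as the product of its $\bar W_1$-, $\bar W_2$-, and central parts, which I absorb into $H_1$, $H_2$, and $Y$ respectively, obtaining $G = H_1\times H_2\times Y$ with $Y$ central, $\rho(H_i)$ as claimed, and $H_i$ of the same disconnected Type as $\bar H_i$. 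I expect the main obstacle to be the cross-centralizing steps: turning the central product over $\bar F$ into a genuine direct product (ruling out any residual action of $\bar L_j$ on $\bar W_i$ by repeated appeals to the square shape of $\Gamma$), and checking that the abelian Frattini subgroup distributes compatibly over the three factors when lifting back to $G$; the remainder is bookkeeping with the known structure of disconnected groups of Types $2$, $3$, and $4$.
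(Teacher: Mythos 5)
Your outline matches the paper's (reduce via Theorem~\ref{punchline}, split $F$ into $[F,G_1]\times[F,G_2]\times Z$, form $H_i$ from a Hall subgroup times $[F,G_i]$), but the step you yourself flag as the ``key point'' is argued fallaciously. You claim that if $\bar L_2$ moves some nonprincipal $\lambda\in\Irr(\bar W_1)$, you obtain a degree of $\bar G$ divisible by a prime of $\rho(\bar G_1)$ and a prime of $\rho(\bar G_2)$, ``impossible since $\Gamma$ is a square.'' But such products are exactly the edges $pr,ps,qr,qs$ that the square \emph{does} contain, so no contradiction arises; and your parenthetical that the $\bar G_i$-degrees over $\lambda$ ``carry both primes of $\rho(\bar G_i)$'' cannot be meant degree-by-degree, since $\Delta(\bar G_i)$ is disconnected. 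The cross-centralization is not obtainable by a direct degree count. The paper's route is: first show $[F,{\bf F}(L_1)]<[F,E]$, which requires Lemma~\ref{stabs} (if $[F,P]=[F,E]$ then ${\bf C}_{[F,P]}(P)=1$ forces $p$ to divide $|L:I_L(\lambda)|$ for \emph{every} nonprincipal $\lambda\in\Irr([F,E])$, contradicting that lemma); then irreducibility of $P$ on $[F,P]$ gives $[F,G_1]\cap[F,G_2]=1$; and only then does $[F,G_1,G_2]\le[F,G_1]\cap[F,G_2]=1$ yield the centralization. Without Lemma~\ref{stabs} here you cannot even rule out $[F,G_1]=[F,G_2]=[F,E]$, in which case your claim is simply false.

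The second gap is the lift through $\Phi(G)$. A direct-product decomposition of $G/\Phi(G)$ does not in general lift to one of $G$, and ``absorbing'' the Frattini subgroup into the three factors is not justified as stated. The paper sidesteps this entirely: because $F$ is abelian, $\rho(G_i)=\rho(G_i/\Phi)$, so each $G_i$ is itself disconnected of Type $2$, $3$, or $4$, and the decomposition $F=[G_i,F]\times{\bf Z}(G_i)$ with $[G_i,F]$ minimal normal holds in $G$, not merely modulo $\Phi$. Minimality then gives $[G_i,F]\cap\Phi(G)=1$, hence $\Phi(G)\le{\bf Z}(G_1)\cap{\bf Z}(G_2)={\bf Z}(G)$, after which the central part distributes over $H_1$, $H_2$, and $Y$ as Hall subgroups of an abelian central subgroup. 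You would need to import this observation to make your final paragraph rigorous.
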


\begin{proof}
Let $\Phi = \Phi (G)$, and let $L/\Phi$ be a complement for $F/\Phi$ in $G/\Phi$.  Following Theorem \ref{punchline}, we write $L/\Phi = L_1/\Phi \times L_2/\Phi$ where $L_1/\Phi$ is a Hall $\{ p, q \}$-subgroup of $L/\Phi$ and $L_2/\Phi$ is a Hall $\{ r, s \}$-subgroup of $L/\Phi$.  We take $G_i = FL_i$, so $G_i$ is normal in $G$.  Since $F$ is abelian, we have $\rho (G_1) = \rho (G_1/\Phi) = \{ p , q \}$ and $\rho (G_2) = \rho (G_2/\Phi) = \{ r, s \}$.  By Theorem \ref{punchline}, we assume that $G_1/\Phi$ is disconnected of Type 4, and we know that $G_2/\Phi$ is disconnected of Type 2, 3, or 4.  It is not difficult to see that $G_i$ will be of the same Type as $G_i/\Phi$.

Because each $G_i$ is disconnected of Types 2, 3, or 4, we have $F =
[G_i,F] \times Z_i$ for each $i = 1, 2$ where $Z_i ={\bf Z} (G_i)$
and $[G_i,F]$ is minimal normal in $G_i$.  Since $G_i$ is normal in
$G$, it follows that each $[G_i,F]$ is minimal normal in $G$.  We
know that $[G_i,F] \Phi > \Phi$, and $[G_i,F]$ is not contained in
$\Phi$.  By the minimality of $[G_i,F]$, we deduce that $[G_i,F]
\cap \Phi = 1$.  This implies that $[\Phi,G_i] \le [F,G_i] \cap \Phi
= 1$, so $\Phi \le Z_i$ for $i = 1, 2$.

We claim that $[G_1,F] \cap [G_2,F] = 1$.
By Corollary \ref{Fitting}, may assume that ${\bf F}(L/\Phi) \cong
E/F$ is a $\{ p, r \}$-subgroup.  Let $P/\Phi$ and $R/\Phi$ be
the Sylow $p$- and Sylow $r$-subgroups of ${\bf F}(L/\Phi)$,
respectively. Notice that $P \le L_1 \le G_1$ and $R \le L_2 \le
G_2$.
Again using the structure of $G_{1}$, we have that $1 \neq [F,P] =
[F,G_1]$ is an abelian $p'$-group and is irreducible under the
action of $P$ and so $[F,P,P] = [F,P]$.  Using Fitting's
lemma, ${\bf C}_{[F,P]} (P) = 1$.  This implies that $P$ does
not centralize any nonprincipal character in $\Irr
([F,P]\Phi/\Phi)$. In particular, $p$ divides $|L:I_L (\lambda)|$
for every nonprincipal $\lambda \in \Irr ([F,P]\Phi/\Phi)$.  By
Lemma \ref{stabs}, there does not exist a prime that
divides $|L:I_L (\lambda)|$ for every $\lambda \in \Irr
([F,E]\Phi/\Phi)$, so that $[F,P] = [F,G_1] < [F,E]$.  Similarly,
$1 < [F,R] = [F,G_2] < [F,E]$ and so $[F,P] \cap [F,R] <
[F,P]$.  Note that $P$ normalizes $[F,P] \cap [F,R]$.  The
irreducibility of the action of $P$ implies that $[F,P] \cap
[F,R] = 1$ and $[F,E] = [F,G_1] \times [F,G_2]$.

Note that $[F,G_1,G_2] \le [F,G_1] \cap [F,G_2] = 1$, so $G_2$
centralizes $[F,G_1]$ and $[F,G_1] \le Z_2$.  Hence, we have $Z_2 =
[F,G_1] \times (Z_1 \cap Z_2)$.  We conclude that $F = [F,G_1]
\times [F,G_2] \times Z$ where $Z = Z_1 \cap Z_2$ and it is not
difficult to see that $Z$ is the center of $G$.  We have $G = (L_1
L_2) F = (L_1 L_2) ([F,G_1] [F,G_2]) Z$.  Hence, $G = (L_1 [F,G_1]
Z)(L_2 [F,G_2] Z)$.  Now, we claim that $L_1 [F,G_1] Z$ and $L_2
[F,G_2] Z$ are normal subgroups of $G$.  To see this, observe that
both $L_2$ and $[F,G_2]$ normalize $L_1$, $[F,G_1]$, and $Z$.  Since
$L_1 [F,G_1] Z$ normalizes itself, $G$ normalizes it.
Similarly, $L_2 [F,G_2] Z$ is normal in $G$.

Let $N_1$ be a Hall $\{ p, q \}$-subgroup of $L_1 [F,G_1] Z$. Since
$Z$ is central in $G_1$, it follows that $H_ 1 = N_1 [F,G_1]$ is normal in
$L_1 [F,G_1] Z$.  Also, we know that $[F,G_1]$ is irreducible under
the action of $N_1$.  Since $N_1$ does not centralize $[F,G_1]$, we
deduce that $H_1 = {\bf O}^{\{p,q\}'}(L_1 [F,G_1] Z)$.  This implies $H_1$ is characteristic in $L_1 [F,G_1] Z$,
and so, $H_1$ is normal in $G$.  Similarly, if $N_2$ is a Hall $\{
r, s \}$-subgroup of $L_2 [F,G_2] Z$, then $H_2 = N_2 [F,G_2]$ is
normal in $G$.  Observe that $H_1 \cap Z = N_1 \cap Z$ is the Hall $\{ p ,q \}$-subgroup of $Z$ and $H_2 \cap Z = N_1 \cap Z$ is the Hall $\{ r, s \}$ subgroup of $Z$.  Taking $Y$ to be the Hall $\{ p, q, r, s \}$
complement of $Z$, we obtain $G = H_1 H_2 Z = H_1 H_2 Y$.  Notice that $H_1 \cap H_2 \le L_1 [F,G_1]Z \cap L_2 [F,G_2]Z = Z$, so $H_1 \cap H_2 = (H_1 \cap Z) \cap (H_2 \cap Z) = 1$.  Therefore, we conclude that $G = H_1
\times H_2 \times Y$.
\end{proof}

We come to the main theorem of this section.  Notice that this result combined with Theorem \ref{nonab p} proves the Main Theorem of the paper.  The main work left to prove this theorem is when $F$ is not abelian.

\begin{theorem}\label{Theorem 3.5}
Assume Hypothesis 3.  Then $G = M \times N$ where $\rho (M) = \{ p, q \}$ and $\rho (N) = \{ r, s \}$.
\end{theorem}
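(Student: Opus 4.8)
The plan is to argue by considering whether or not the Fitting subgroup $F = {\bf F}(G)$ is abelian.

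If $F$ is abelian, then Theorem~\ref{Corollary 3.6} applies and gives $G = H_1 \times H_2 \times Y$ with $Y$ central, $\rho(H_1) = \{p,q\}$, and $\rho(H_2) = \{r,s\}$; moreover the proof of that theorem shows that $Y$ is the Hall $\{p,q,r,s\}'$-subgroup of ${\bf Z}(G)$, so $Y$ is abelian and has order coprime to every prime of $\rho(G)$. Hence $M := H_1 \times Y$ has $\rho(M) = \rho(H_1) = \{p,q\}$, and with $N := H_2$ we obtain $G = M \times N$ with $\rho(N) = \{r,s\}$, as required.

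So assume from now on that $F$ is nonabelian. Then some Sylow subgroup of the nilpotent group $F$ is nonabelian, so $\Phi({\bf F}(G)) \ne 1$, and therefore $\Phi := \Phi(G) \ne 1$ since $\Phi({\bf F}(G)) \le \Phi(G)$. I would pass to $\bar G := G/\Phi$. A Frattini argument exactly as in the proof of Lemma~\ref{Lemma 3.1} shows that $\bar G$ still has no normal Sylow $t$-subgroup for any $t \in \rho(G)$, so $\bar G$ satisfies Hypothesis~3, and $\Delta(\bar G) = \Delta(G) = \Gamma$ by Lemma~\ref{Lemma 3.1}. Since $\Phi(\bar G) = 1$, Gasch\"utz's theorem (Satz III.4.5 of \cite{[7]}) shows that ${\bf F}(\bar G) = F/\Phi$ is a direct product of minimal normal subgroups of $\bar G$, hence abelian. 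Thus Theorem~\ref{Corollary 3.6} applies to $\bar G$ and yields $\bar G = \bar H_1 \times \bar H_2 \times \bar Y$ with $\rho(\bar H_1) = \{p,q\}$, $\rho(\bar H_2) = \{r,s\}$, and $\bar Y$ central of order coprime to $\rho(G)$; in addition, the proof of Theorem~\ref{Corollary 3.6} supplies the decomposition $F/\Phi = [F/\Phi, \bar G_1] \times [F/\Phi, \bar G_2] \times {\bf Z}(\bar G)$ of $F/\Phi$ as a module for $\bar G$, where $\bar G_i = FL_i/\Phi$ are the normal subgroups constructed there with $\rho(\bar G_1) = \{p,q\}$ and $\rho(\bar G_2) = \{r,s\}$. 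Taking $M_0$ and $N_0$ to be the full preimages in $G$ of $\bar H_1\bar Y$ and of $\bar H_2$, we obtain $G = M_0 N_0$, $M_0 \cap N_0 = \Phi$, and $[M_0, N_0] \le \Phi$.

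The remaining task, which I expect to be the main obstacle, is to upgrade this near-direct product to a genuine one by distributing $\Phi$ between the two factors: one wants $G$-invariant complements to suitable $\Phi$-subgroups inside $M_0$ and inside $N_0$, yielding subgroups $M \le M_0$ and $N \le N_0$ with $M \cap N = 1$, $MN = G$, and $[M,N] = 1$. The ingredients I would marshal are: (i) the module decomposition above, pulled back to $G$ as $F = F_1 F_2 Z^{*}$ with $F_i/\Phi = [F/\Phi, \bar G_i]$ and $Z^{*}/\Phi = {\bf Z}(\bar G)$, so that $F_1 \cap F_2 \le \Phi$ and $[F_1, F_2] \le \Phi$; (ii) the fact, read off from the disconnected Type~$2$, $3$, or $4$ structure of the $\bar H_i$ together with Lemma~\ref{comp} and Corollary~\ref{Fitting}, that in each $\bar G_i$ a suitable complementary Sylow subgroup $W_i$ of $L_i$ acts fixed-point-freely on $[F/\Phi, \bar G_i]$; and (iii) coprime-action arguments inside the $F_i$, supplemented by the constraint $\Delta(G) = \Gamma$ --- the latter invoked to show that if a minimal normal subgroup $V \le \Phi$ failed to lie in a $G$-invariant complement to a $\Phi$-subgroup of $M_0$ or $N_0$, then $G$ would possess an irreducible character whose degree is divisible by a pair of primes not adjacent in $\Gamma$, a contradiction. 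Organizing this by the prime $\ell$ dividing such a $V$ --- whether $\ell \in \pi(E:F) = \{p,r\}$, $\ell \in \{q,s\}$, or $\ell \notin \rho(G)$ --- and by whether $\ell$ is coprime to $W_i$, and exploiting the symmetry of Hypothesis~1 under interchanging $p \leftrightarrow r$ and $q \leftrightarrow s$ to halve the cases, is where the real work lies; no single step is difficult. Once $\Phi$ has been split off compatibly with $G = M_0 N_0$, we get $G = M \times N$, and $\rho(M) = \{p,q\}$, $\rho(N) = \{r,s\}$ follow because $\rho$ is unchanged on passing back up through the Frattini extension from $\bar G = \bar H_1 \times \bar H_2 \times \bar Y$.
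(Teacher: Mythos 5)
Your handling of the case where $F$ is abelian is correct and is exactly what the paper does: it is an immediate application of Theorem \ref{Corollary 3.6}, with the central $\rho'$-group $Y$ absorbed into one factor. The gap is the entire case where $F$ is nonabelian. There you pass to $G/\Phi(G)$, correctly observe that its Fitting subgroup is abelian so that Theorem \ref{Corollary 3.6} applies to it, and obtain $G = M_0N_0$ with $M_0 \cap N_0 = \Phi$ and $[M_0,N_0] \le \Phi$ --- but then you explicitly defer the passage from this near-direct product to a genuine one, and that passage is the whole content of the theorem in this case. The strategy you sketch for it ("distributing $\Phi$ between the two factors" by finding $G$-invariant complements to $\Phi$-subgroups) does not engage with the actual obstruction. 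Since $F' \le \Phi(G)$, the Frattini subgroup carries precisely the nonabelian part of $F$, and because $\rho(G_i) = \rho(F) \cup \pi(|G_i:F|)$ (as ${\bf F}(G_i) = F$), a nonabelian Sylow $t$-subgroup of $F$ with $t \in \{r,s\}$ injects the prime $t$ into $\rho(G_1)$ even though $L_1$ is a $\{p,q\}$-group. This entangles the two putative factors at the level of character degrees, not merely at the level of a Frattini extension; moreover, normal subgroups of $G$ lying inside $\Phi(G)$ are exactly the ones with no complement, so a complement-hunting argument has no obvious starting point.

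The paper's argument for nonabelian $F$ is of a genuinely different character and you would need to reproduce something like it. It first shows $\rho(G_i) \ne \rho(G)$ using Corollary \ref{two nonab}, then runs a case analysis on whether $|\rho(G_1)|$ and $|\rho(G_2)|$ equal $2$ or $3$ (the third prime arising from a nonabelian Sylow subgroup of $F$), eliminating all but one configuration by showing the others force quotients such as $G_1/({\bf O}_t(G))'$ to be disconnected of Type 5 or force the $\{r,s\}$-component to be of Type 2 or 3, which pins the primes to $\{2,3\}$ and contradicts the disjointness of $\{p,q\}$ and $\{r,s\}$. In the one surviving configuration it identifies $q = 2$, shows $G_1$ is disconnected of Type 5, constructs the index-$2$ normal subgroup $G_3 = E_1G_2$ which has a normal nonabelian Sylow $2$-subgroup, applies Theorem \ref{nonab p} to get $G_3 = K_1 \times K_2$, and finishes with a character-extension argument showing every member of $\Irr(G_3/K_1)$ is $G$-invariant, so that $G/K_1$ is itself disconnected of Type 4 and splits off $K_2$ as a direct factor of $G$. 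None of the ingredients you list substitutes for this reduction back to the normal-nonabelian-Sylow machinery of Section \ref{section 2}, so as written the nonabelian case remains unproved.
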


\begin{proof}
If $F$ is abelian, then the conclusion is Theorem \ref{Corollary 3.6}.  Thus, we may assume $F$ is not abelian.

Define $G_1$ and $G_2$ as in Theorem \ref{punchline}.  First, we note that $\rho(G_{i}) = \rho(F) \cup \pi(|G_{i}:F|)$ since $F(G_{i}) = F$.  It follows that $\{p, q \} \subseteq \rho(G_{1})$ and $\{r, s\} \subseteq \rho(G_{2})$, and so, both $\rho (G_{1})$ and $\rho (G_{2})$ contain at least two primes.  Recall that $G_{1}/F$ is the normal Hall $\{p, q\}$-subgroup of $G/F$, and $G_{2}/F$ is the normal Hall $\{r, s\}$-subgroup of $G/F$.

We claim that $\rho (G_{i}) \neq \rho (G)$, for $i = 1, 2$. To prove this, suppose $\rho (G_{1}) = \rho(G)$.  Since $G_1$ is normal in $G$, we see that $\Delta (G_1)$ is a subgraph of $\Delta (G)$, and so, $\Delta(G_{1}) = \Delta (G) = \Gamma$.  Since $\rho (G_1/\Phi (G)) = \{ p, q \}$, it must be that $G_1$ has normal, nonabelian Sylow $r$- and Sylow $s$-subgroups.  By Corollary \ref{two nonab} applied to $G_1$, we deduce that $h(G_{1}) = 2$, a contradiction since $G_1/\Phi (G)$ has Fitting height at least $3$.  Similarly, we also get $\rho (G_{2}) \neq \rho (G)$.

Suppose $G_{2}/\Phi(G)$ is disconnected of Type 2 or 3.  We see that
$\{ r, s \} = \{ 2, 3 \}$.  In this situation, we claim that both
$\rho (G_{1})$ and $\rho (G_{2})$ have two primes. We have seen that
$\rho (G_{i}) \neq \rho (G)$ for $i=1, 2$. Next, we show that $\rho
(G_{1})$ has three primes if and only if $\rho (G_{2})$ has three
primes.  Suppose first that $\rho (G_{2}) = \{2, 3, t \}$, where $t
\in \{p, q \}$.  Notice that this forces $F$ to have a nonabelian
Sylow $t$-subgroup.  Suppose $\rho(G_{1}) = \{p, q \}$, then $G_{1}$
is disconnected of Type 5, since ${\bf F}(G_{2}) = F = {\bf
F}(G_{1})$ is nonabelian.  This implies $2 \in \rho (G_{1})$,
contradicting $\{r,s\} \cap \{p,q\} = \emptyset$. Conversely,
suppose $\rho (G_1) = \{ p, q, e \}$ with $e \in \{ 2, 3 \}$, and
$\rho (G_2) = \{ 2, 3 \}$.  This forces $F$ to have a nonabelian
Sylow $e$-subgroup, but now $G_2$ is disconnected and has a
nonabelian Fitting subgroup and has disconnected group of Type 2 or
3 as a quotient, a contradiction.

Finally, suppose $\rho (G_{1}) = \{p, q, t \}$ and $\rho (G_{2}) =
\{2, 3, e \}$, where $t \in \{2, 3 \}$ and $e \in \{ p, q \}$. Now
consider $G_{1}/({\bf O}_{t}(G))^{'}$.  It is clear that
$\Delta(G_{1}/({\bf O}_{t}(G))^{'})$ has two connected components
$\{p \}$ and $\{q \}$.  And observe that the Fitting subgroup of
$G_{1}/{\bf O}_{t} (G)^{'}$ is nonabelian since ${\bf O}_{e}(G)$ is
nonabelian. This implies that $G_{1}/({\bf O}_{t}(G))^{'}$ is
disconnected of Type 5, again contradicting $\{r,s\} \cap \{p,q\} =
\emptyset$, and so the claim holds.

Now, we proved $\rho (G_{1}) = \{p, q \}$ and $\rho (G_{2}) = \{2, 3 \}$. Since $G_{2}/\Phi (G)$ is disconnected of Type 2 or 3, we conclude that $G_{2}$ is disconnected of Type 2 or 3.  But disconnected groups of Types 2 and 3 have abelian Fitting subgroups, and so, ${\bf F} (G_2) = F$ is abelian, a contradiction.

We now have $G_{1}/\Phi(G)$ and $G_{2}/\Phi(G)$ are disconnected of Type 4.  There are three possibilities that can
happen from the argument at beginning of the proof:
(a) $|\rho (G_{i})| = 3$ for both $i = 1, 2$, (b) $|\rho (G_{i})| = 2$ for both $i = 1, 2$, and (c) $|\rho (G_{1})| = 2$ and $|\rho (G_{2})| = 3$.

If case (b) happens, then ${\bf F}(G_{i}) = F$ is abelian, and this
is a contradiction.  (If $F$ has a nonabelian Sylow $t$-subgroup,
then $t$ will be in both $\rho(G_1)$ and $\rho (G_2)$.)  Thus, we
know that (b) does not occur.  Suppose that $\rho (G_{1}) = \{p, q,
t \}$ and $\rho (G_{2}) = \{r, s, e \}$, where $t \in \{r, s \}$ and
$e \in \{p, q \}$.  Now consider $G_{1}/({\bf O}_{t}(G))^{'}$ and
$G_{2}/({\bf O}_{e}(G))^{'}$.  Observe that $\Delta (G_{1}/({\bf
O}_{t} (G))^{'})$ has two connected components $\{p \}$ and $\{q \}$
and $\Delta (G_{2}/({\bf O}_{e} (G))^{'})$ has two connected
components $\{r \}$ and $\{s \}$.  Both $G_{1}/({\bf O}_{t}(G))^{'}$
and $G_{2}/({\bf O}_{e} (G))^{'}$ have nonabelian Fitting subgroups.
We conclude that both $G_{1}/({\bf O}_{t} (G))^{'}$ and $G_{2}/({\bf
O}_{e} (G))^{'}$ are disconnected of Type 5, a contradiction to
$\{p,\ q\} \cap \{r,\ s\}= \emptyset$. Thus, (a) does not occur.

Suppose case (c) happens.  Let $\rho (G_{1}) = \{ p, q\}$ and $\rho
(G_{2}) = \{r, s, e \}$, where $e \in \{p, q \}$.  We note that
$\Delta(G_{1})$ has two connected components $\{p \}$ and $\{q \}$.
Write $F = {\bf O}_{e} (G) \times Z$.  Since $F$ is nonabelian, it
is clear that $G_{1}$ is disconnected of Type 5, $Z\leq {\bf
Z}(G_{1})$, and $e = q = 2$.  Let $F_1$ and $E_1/F_1$ be the Fitting
subgroups of $G_1$ and $G_1/F_1$.  Since $G_1$ is of Type 5,
$|G_1:E_1| = 2$.

Let $G_3 = E_1 G_2$, and observe that $|G:G_3| = 2$.  Also, it is
not difficult to see that $\rho (G) = \rho (G_3)$, so $\Delta (G_3)
= \Delta (G)$.  Notice that $G_3$ has a nonabelian  normal Sylow
$2$-subgroup. Thus, we can appeal to Theorem \ref{nonab p} in $G_3$,
and we obtain $G_3 = K_1 \times K_2$ where $K_1$ and $K_2$ are characteristic subgroups of $G$ with $\rho
(K_1) = \{ p, 2 \}$ and $\rho (K_2) = \{ r, s \}$.  Thus, $K_1$ and
$K_2$ are normal in $G$. Notice that $G_2 = K_2 F$, and so $G_2/\Phi
(G) \cong K_2/(K_2 \cap \Phi (G))$, and so, $K_2$ is disconnected of
Type 4.

Let $\theta \in \Irr (K_1)$ have $p$ dividing $\theta (1)$.  Observe
that $\theta \times 1_{K_2}$ will be $G$-invariant, since $2p$
divides no degree in $\cd G$.  If $\gamma \in \Irr (K_2)$, then the
stabilizer of $\theta \times \gamma$ will be the stabilizer of
$\gamma$ in $G$.  Again, since $2p$ does not divide any degree in
$\cd G$, we conclude that $\gamma$ is $G$-invariant for all $\gamma
\in \Irr (K_2) = \Irr (G_3/K_1)$.  It follows that every character
in $\Irr (G_3/K_1)$ extends to $G$, and so, $\cd {G/K_1} = \cd
{G_3/K_1}$. Since $G_3/K_1 \cong K_2$ is disconnected of Type 4, we
see that $G/K_1$ is disconnected of Type 4.  It follows that $G/K_1
= M/K_1 \times G_3/K_1$ where $M/K_1$ is the Sylow $2$-subgroup of
the center of $G/K_1$.  We now have $G = M \times K_2$, and the
result follows by taking $N = K_2$.
\end{proof}

\vspace{0.3cm} \noindent{\normalsize Acknowledgements. } The
research of the second author was partially supported by China
Scholarship Council (CSC). The most work of this paper was done
while she was visiting Kent State University with the first author
and it will be one part of the second author's Ph.D thesis. She
thanks the Department of Mathematical Sciences of Kent State
University for its hospitality and appreciates Prof. Lewis' guidance.



\end{document}